\newcommand{\comment}[1]{}
\numberwithin{equation}{section}
\theoremstyle{plain} 
\newtheorem*{maintheorem}{Main Theorem}
\newtheorem{theorem}[equation]{Theorem}
\newtheorem{corollary}[equation]{Corollary}
\newtheorem{lemma}[equation]{Lemma}
\newtheorem{fact}[equation]{Fact}
\newtheorem{proposition}[equation]{Proposition}
\newtheorem{claim}[equation]{Claim}
\newtheorem{example}[equation]{Example}
\newtheorem{definition}[equation]{Definition}
\newtheorem{remark}[equation]{Remark}
\newtheorem{remarks}[equation]{Remarks}
\newcommand\step[2]{\medbreak\noindent{\bf Step #1.} {\sl #2}\smallbreak}
\newcommand\alter[1]{\left\{\begin{array}{ll}#1\end{array}\right.}
\DeclarePairedDelimiter{\Size}{|}{|}
\newcommand\piS{\pi}
\newcommand\simN{{\scriptstyle \stackrel{N}\sim}}
\newcommand\BE{\approx}
\newcommand\cA{\mathcal A}
\newcommand\cB{\mathcal B}
\newcommand\cl[1]{\left<#1\right>_\sim}
\newcommand\cls[1]{\left<#1\right>} 
\newcommand\cG{\mathcal G}
\newcommand\cL{\mathcal L}
\newcommand\cO{\mathcal O}
\newcommand\Diff{\operatorname{Diff}}
\newcommand\eps{\epsilon}
\newcommand\id{\operatorname{id}}
\newcommand\NN{{\mathbb N^0}}
\newcommand\RR{{\mathbb R}}
\newcommand\ZZ{{\mathbb Z}}
\newcommand\per{\operatorname{per}}
\newcommand\pih{{\hat\pi}}
\newcommand\Prob{{\operatorname{Prob}}}
\newcommand\Proberg{\operatorname{Prob}_{\operatorname{erg}}}
\newcommand\rec{{\rm rec}}
\newcommand\reg{\ast}
\newcommand\BEpi{\operatorname{\stackrel\pi\BE}}
\newcommand\simpi{\operatorname{\stackrel\pi\sim}}
\newcommand\hchi{\widehat{\chi}}
\newcommand\Probhyp{\Prob_{\operatorname{hyp}}}
\renewcommand\top{{\operatorname{top}}}
\begin{document}

\title[Degrees of Bowen factors]{The degree of Bowen factors and injective codings of diffeomorphisms}


\author{J\'er\^ome Buzzi}
\address{Laboratoire de Math\'ematiques d'Orsay - Universit\'e Paris-Sud}
\email{jerome.buzzi@math.u-psud.fr}

\begin{date}
{\today}
\end{date}

\begin{abstract}
We show that symbolic finite-to-one extensions of the type constructed by O.~Sarig for surface diffeomorphisms induce H\"older-continuous \emph{conjugacies} on large sets. 
We deduce this from their \emph{Bowen property}. This notion, introduced in a joint work with M.\ Boyle, generalizes a fact first observed by R.\ Bowen for Markov partitions. We rely on the notion of degree from finite equivalence theory and magic word isomorphisms.

As an application, we give lower bounds on the number of periodic points first for surface diffeomorphisms (improving a result of Sarig) and for Sina\"\i\ billiards maps (building on a result of Baladi and Demers). Finally we characterize surface diffeomorphisms admitting a H\"older-continuous coding of all their aperiodic hyperbolic measures and give a slightly weaker construction preserving local compactness.
\end{abstract}

{{\renewcommand{\thefootnote}{}%
\footnote{\emph{Mathematics Subject Classification (2010):} Primary 37B10; Secondary 37C25; 37C40; 37E30}}
{\renewcommand{\thefootnote}{}%
\footnote{\emph{Keywords:} symbolic dynamics;  Markov shifts; degree of codings; magic isomorphisms; periodic points; smooth ergodic theory; surface diffeomorphisms.}
}
 
\maketitle

\section{Introduction}\label{s-intro}

In this text, a dynamical system  is an automorphism of a standard Borel space   and all measures are understood to be ergodic, invariant, Borel probability measures.  A Markov shift is a ``subshift defined by a countable oriented graph''. It is equipped with a standard distance (see Sec. 2 for precise definitions and references).

For  a smooth diffeomorphism $f$ of a compact manifold,  a measure is called \emph{hyperbolic} if it has no zero Lyapunov exponent and both a positive and a negative exponent (see, e.g., \cite[Chap.~S]{Katok-Hasselblatt} for background on smooth ergodic theory). A measure is called \emph{$\chi$-hyperbolic} for some $\chi>0$, if it is hyperbolic and has no exponent in the interval $[-\chi,\chi]$. 

\medbreak

We build conjugacies from the finite-to-one extensions of surface diffeomorphisms of Sarig \cite{Sarig-JAMS} making them injective while preserving the H\"older-continuity and discarding only a subset negligible with respect to all (invariant probability) measures:

\begin{theorem}\label{theorem-SurfDiffCoding}
Let $f$ be a diffeomorphism with H\"older-continuous differential mapping a compact boundaryless $C^\infty$ surface $M$ to itself. For  any numbers $0<\chi'<\chi$, there exist a Markov shift $S:X\to X$ and a H\"older continuous map $\pi:X\to M$ such that:
 \begin{itemize}
  \item $\pi\circ S=f\circ\pi$;
  \item $\pi:X\to M$ is injective;
  \item $\pi(X)$ has full measure for any $\chi$-hyperbolic measure;
   \item for any periodic $x\in X$, the periodic point $\pi(x)$ defines a $\chi'$-hyperbolic measure.
 \end{itemize}
\end{theorem}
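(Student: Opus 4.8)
The plan is to feed Sarig's finite-to-one symbolic extension into the magic-word/degree machinery, using the Bowen property to keep everything H\"older and Markovian.

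\emph{Step 1: the starting coding.} First I would invoke Sarig's construction \cite{Sarig-JAMS}, applied with a parameter $\bar\chi$ chosen in $(\chi',\chi]$ (the gap between $\chi'$ and $\chi$ is exactly the room needed for the recoding below): this produces a countable Markov shift $\hat S:\hat\Sigma\to\hat\Sigma$ and a H\"older, equivariant factor map $\bar\pi:\hat\Sigma\to M$ which is finite-to-one on the recurrent part of $\hat\Sigma$, whose image is a Borel set of full measure for every $\bar\chi$-hyperbolic — hence for every $\chi$-hyperbolic — measure, and all of whose periodic points map to $\bar\chi$-hyperbolic (so $\chi'$-hyperbolic) periodic orbits of $f$. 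Passing to the recurrent part costs nothing: it has full measure for every measure and still carries all the periodic points we care about, and there $\bar\pi$ is genuinely finite-to-one.

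\emph{Step 2: the Bowen property.} The technical heart is to establish that $\bar\pi$ has the Bowen property in the sense generalizing Bowen's observation for Markov partitions: there are constants controlling the non-injectivity of $\bar\pi$ \emph{locally}, i.e.\ if $\bar\pi(x)=\bar\pi(y)$ and $x_i=y_i$ for some $i$, then $x$ and $y$ must agree on a block around $i$ whose length is bounded below in a controlled way; equivalently, a long common stretch of image forces a long common combinatorial code up to bounded ambiguity. For Sarig's $\bar\pi$ this comes out of the geometry of the construction (Pesin charts and the inverse theorem identifying which $\epsilon$-chains can shadow a given orbit), and I would prove it by unwinding the definition of the factor map together with the uniform regularity of the charts. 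This is the step I expect to be the main obstacle, and it is what makes the finite-equivalence theory usable in the H\"older category rather than merely Borel-measurably.

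\emph{Step 3: injectivization via magic words.} With the Bowen property in hand, decompose $\hat\Sigma$ into its irreducible components $\Sigma_1,\Sigma_2,\dots$; on each $\Sigma_k$ the restriction of $\bar\pi$ has a well-defined degree $d_k$ and admits a magic word $w_k$ (Bowen--Boyle theory, extended to countable Markov shifts as in the joint work with Boyle referenced in the abstract). Passing through an occurrence of $w_k$ canonically singles out one preimage among the $d_k$ preimages of a magic point, and this selection is Markovian, so it is realized by a Markov shift $X_k$ together with an \emph{injective} equivariant map $\pi_k:X_k\to M$ with $\pi_k(X_k)\supseteq\bar\pi(\Sigma_k)$ off a set negligible for every measure; H\"older-continuity of $\pi_k$ is precisely where the Bowen property enters, since two $X_k$-points agreeing on a long central block have their selected preimages agreeing on a long central block, hence nearby images under $\bar\pi$. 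Finally I would set $X=\bigsqcup_k X_k$ (a Markov shift) and $\pi=\bigsqcup_k\pi_k$: this is H\"older, equivariant, has image of full measure for every $\chi$-hyperbolic measure (lift such a measure to the component carrying it), and its periodic points map to $\chi'$-hyperbolic orbits. The only remaining point is genuine injectivity: distinct components may share part of their image, so I would enumerate the $X_k$ and recursively discard from $X_k$ the Markovian, everywhere-null-image set of points whose image already occurs in $\bigcup_{j<k}\pi_j(X_j')$, obtaining sub-Markov-shifts $X_k'$; one checks the discarded pieces are Markov-removable and carry no measure and no required periodic orbit, so $\bigsqcup_k X_k'$ with the corestricted map satisfies all four bullets. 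Besides Step 2, the secondary delicate point is verifying simultaneously that this final deletion keeps everything a Markov shift, preserves all $\chi$-hyperbolic measures, and does not destroy the codings of $\chi$-hyperbolic periodic orbits.
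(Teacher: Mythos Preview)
Your Steps~1 and~2 match the paper's approach: one starts from Sarig's coding $\hat\pi:\hat\Sigma^\#\to M$ (the paper's Lemma~\ref{lem-barchi1} handles the $\chi'$-hyperbolicity of periodic points, corresponding to your choice of $\bar\chi$), and records that Sarig's \emph{affiliation} relation is a locally finite Bowen relation for $\hat\pi$ (this is property~(P1), established in \cite{BoyleBuzzi-2017} rather than re-proved here).

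Step~3 has a genuine gap. Magic words for a degree-$d_k$ Bowen factor tell you that the Bowen class of a point seeing the magic word i.o.\ has exactly $d_k$ elements (Theorem~\ref{thm-degree}), but they do \emph{not} give a Markovian rule selecting one of those $d_k$ elements. Your sentence ``passing through an occurrence of $w_k$ canonically singles out one preimage'' is unjustified: at a magic index there are $d_k$ distinct preimage-symbols, and any choice among them is not determined by finite-range data. So even on a single irreducible component your $\pi_k$ is not defined.

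The paper's injectivization works along a different axis: instead of irreducible components, it partitions the image by fiber cardinality (the \emph{degree spectrum} $\Delta(\pi)$) and applies an iterated \emph{Bowen quotient}. The order-$N$ Bowen quotient (Definition~\ref{def-Bowen-quotient}) has as symbols the $N$-element $\sim$-cliques of the original alphabet; its fiber over $y$ has $\binom{|\pi^{-1}(y)|}{N}$ elements, so points with exactly $N$ preimages now have exactly one. Magic words are then used only to \emph{identify} this injectivity locus combinatorially (Lemma~\ref{lem-magic-injec}, Theorem~\ref{thm-degree}), so that it can be recoded as a Markov shift (Lemma~\ref{lem-magic-subset}). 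Iterating over the degree spectrum produces Markov shifts $S_i$ whose images are automatically pairwise disjoint (Claim~\ref{claim-disjoint}) --- which also eliminates your recursive-deletion step, whose Markov-preservation you were right to flag as delicate and which is not addressed in your outline.
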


Previous injectivity results \cite{BoyleBuzzi-2017,Sarig-Private,BCS} were only with respect to a single measure, a restricted class of measures, or by jettisoning the continuity and discarding periodic orbits. 

As an application, we deduce from well-known results on Markov shifts  estimates on the periodic counts of surface diffeomorphisms.
Consider the hyperbolic periodic points with given \emph{minimal} period and Lyapunov exponents (defined by identifying a periodic orbit with the obvious measure) bounded away from zero by a number $\chi>0$:
 $$
    \per_\chi(f,n):=\{x\in M:\{f^k(x):k\in\ZZ\} \text{ has cardinality $n$ and is $\chi$-hyperbolic}\}.
 $$

We denote the cardinality of a set by $|\cdot|$.

\begin{theorem}\label{theorem-SurfDiffPeriodic}
Let $f$ be a $C^\infty$-diffeomorphism of a closed surface $M$. Assume that its topological entropy $h_\top(f)$ is positive. Then there is some integer $p\geq1$ such that:
  \begin{equation}\label{eq-period-lower}
   \forall\chi<h_\top(f)\quad \liminf_{\scriptsize\begin{matrix} n\to\infty\\ p|n\end{matrix}} e^{-n\cdot h_\top(f)} \Size{\per_\chi(f,n)} \geq p.
 \end{equation}
 If the diffeomorphism is topologically mixing, one can take $p=1$.
\end{theorem}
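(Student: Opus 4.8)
The plan is to transport the periodic‑orbit asymptotics of positive recurrent countable Markov shifts across the injective coding furnished by Theorem~\ref{theorem-SurfDiffCoding}. Write $h:=h_\top(f)>0$ and fix an arbitrary $\chi<h$. Apply Theorem~\ref{theorem-SurfDiffCoding} with its parameter ``$\chi'$'' equal to our $\chi$ and its parameter ``$\chi$'' equal to some $\chi''\in(\chi,h)$; this yields a Markov shift $S\colon X\to X$ and a H\"older, injective, equivariant map $\pi\colon X\to M$ such that $\pi(X)$ is full for every $\chi''$-hyperbolic measure while $\pi(x)$ is $\chi$-hyperbolic for every periodic $x\in X$. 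Since $\pi$ is an injective Borel map between standard Borel spaces, it is a Borel isomorphism onto the Borel set $\pi(X)$, so $h(\hat\mu)=h(\pi_*\hat\mu)$ for every measure $\hat\mu$ on $X$; in particular the Gurevich entropy satisfies $h_G(X)\le h_\top(f)=h$. Since $f$ is $C^\infty$, Newhouse's theorem provides upper semicontinuity of $\mu\mapsto h(\mu)$, hence an ergodic measure of maximal entropy $\mu_0$ with $h(\mu_0)=h$. Ruelle's inequality, applied to $f$ and to $f^{-1}$ (a surface carries at most one positive Lyapunov exponent), forces $\mu_0$ to be $c$-hyperbolic for every $c<h$; in particular $\mu_0$ is $\chi''$-hyperbolic and therefore lifts through $\pi$ to a measure of maximal entropy of $X$, so that $h_G(X)=h$.

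Next I would single out a positive recurrent piece. An ergodic component of that lift is an ergodic measure of maximal entropy carried by a single topologically transitive component $X'\subseteq X$, which then has $h_G(X')=h$; and a topologically transitive countable Markov shift carrying a measure of maximal entropy is positive recurrent (Gurevich). Let $p$ be the period of $X'$, i.e.\ the $\gcd$ of its cycle lengths. If one fixes the ergodic measure $\mu_0$ of $f$ above once and for all and always chooses $X'$ to be the component carrying the lift of $\mu_0$, then $p$ does not depend on the admissible $\chi$: indeed $\pi$ restricts to a Borel isomorphism on a set of full $\mu_0$-measure, and the period is a measure-theoretic isomorphism invariant of the ergodic system $(f,\mu_0)$. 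When $f$ is topologically mixing, Buzzi--Crovisier--Sarig show that its measure of maximal entropy is unique and Bernoulli, so in that case $p=1$.

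Then I would quote the renewal theorem for positive recurrent topologically transitive countable Markov shifts: for each symbol $a$ of $X'$ one has $e^{-nh}\,\Size{\{x\in X': S^nx=x,\ x_0=a\}}\to c_a>0$ along $p\mid n$, with $\sum_a c_a=p$. Now I would feed this back through $\pi$. Since $\pi$ is injective and equivariant, a point $x\in X'$ of minimal $S$-period $n$ has image $\pi(x)\in\per_\chi(f,n)$, and distinct such $x$ have distinct images; moreover $\Size{\{x\in X': S^mx=x,\ x_0=a\}}$ is $0$ unless $p\mid m$ and is $\le e^{(h+o(1))m}$. Hence, for every finite set $a_1,\dots,a_k$ of symbols of $X'$,
\begin{equation*}
 \Size{\per_\chi(f,n)}\ \geq\ \sum_{i=1}^{k}\Bigl(\Size{\{x\in X': S^nx=x,\ x_0=a_i\}}-\sum_{\substack{d\mid n\\ d<n}}\Size{\{x\in X': S^dx=x,\ x_0=a_i\}}\Bigr),
\end{equation*}
where the subtracted terms contribute only $O(e^{(h/2+o(1))n})=o(e^{nh})$. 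Taking the liminf along $p\mid n$ gives $\liminf e^{-nh}\Size{\per_\chi(f,n)}\ge\sum_{i\le k}c_{a_i}$; letting $k\to\infty$ produces $\liminf e^{-nh}\Size{\per_\chi(f,n)}\ge\sum_a c_a=p$, which is~\eqref{eq-period-lower}.

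The principal obstacle is the Markov-shift input of the third paragraph (together with the implication ``$f$ has a measure of maximal entropy $\Rightarrow X'$ is positive recurrent''). The renewal/local-limit asymptotics, and above all the conservation identity $\sum_a c_a=p$, is exactly where positive recurrence --- not merely recurrence --- enters; and it must be invoked carefully, because $X'$ need not be locally compact, so the total number of $n$-periodic points of $X'$ may be infinite. This is why the argument is run through finite symbol sets with Fatou's lemma rather than through one global count. The remaining ingredients --- separating minimal period from divisor periods, the elementary sub-exponential estimates on the lower-order counts, and the reduction of the topologically mixing case to period $1$ --- are routine once these facts are in hand.
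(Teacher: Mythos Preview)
Your proof is correct and follows essentially the same route as the paper's: transport periodic-point asymptotics from a positive recurrent irreducible component of the injective Markov coding, via the renewal theorem for such shifts (the paper's Lemma~\ref{lem-barchi2}) and a Fatou argument over finite symbol sets. The only cosmetic difference is that the paper packages the argument through an intermediate statement (Theorem~\ref{thm-lower-per}) allowing several m.m.e.'s at once, and identifies the period $p$ directly from Sarig's Bernoulli-times-rotation structure for m.m.e.'s rather than through your isomorphism-invariant argument; the content is the same.
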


This improves the previous estimate due to Sarig \cite{Sarig-JAMS}:
 \begin{equation}\label{eq-Sarig-lower}
   \exists p\geq1\; \liminf_{\scriptsize\begin{matrix} n\to\infty\\ p|n\end{matrix}} e^{-n\cdot h_\top(f)} \Size{\{x:x=f^nx \text{ and is $\chi$-hyperbolic}\}} >0.
 \end{equation}
Indeed, not only do we have an explicit constant, but we control the minimal period. By comparison, the estimate \eqref{eq-Sarig-lower} is compatible, e.g., with $\per_\chi(f,n)=\emptyset$ for infinitely many $n$ a multiple of $p$.
\medbreak

Thanks to works of  Baladi and Demers~\cite{Baladi-Demers-2018} and Lima and Matheus~\cite{Lima-Matheus-2016}, our general results can be applied to the classical collision map $T_B$ of any two-dimensional Sina\"\i\ billiard $B$ (see, e.g., \cite{Chernov-book}  for background) satisfying the following two conditions: 
\begin{itemize}
 \item[(BD1)] all trajectories have a nontangential collision (see  \cite[strong finite horizon property before Rem. 1.1]{Baladi-Demers-2018});
 \item[(BD2)] some combinatorial entropy (denoted by $h_*$ and introduced in \cite[Def.~2.1]{Baladi-Demers-2018}) is above some threshold defined in \cite[eqs.\ (1.2)~and~(1.2)]{Baladi-Demers-2018}.
\end{itemize}
We denote by $\Lambda_B$ the hyperbolicity constant  of $T_B$ from eqs. (2.2)-(2.3) in \cite{Baladi-Demers-2018}. We remark that Baladi and Demers also prove that $h_*=\sup\{h(T_B,\nu):\nu\in\Proberg(T_B)\}$.

\begin{theorem}\label{theorem-BilliardMap}
If $T_B$ is the collision map  of a two-dimensional Sina\"\i\ billiard $B$ satisfying conditions (BD1) and (BD2), then:
 $$
  \liminf_{n\to\infty} e^{-n\cdot h_*} \Size{\per_{\Lambda_B}(T_B,n)} \geq 1.
 $$
\end{theorem}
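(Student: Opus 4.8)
The plan is to run the argument behind Theorem~\ref{theorem-SurfDiffPeriodic} (in its topologically mixing form, where $p=1$) verbatim, feeding it a symbolic model of $T_B$ in place of Sarig's coding of a surface diffeomorphism; the genuinely new content is confined to the two imported ingredients, namely the existence of such a model (Lima--Matheus) and the thermodynamics of $T_B$ (Baladi--Demers).

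First I would set up the symbolic side. By \cite{Lima-Matheus-2016}, $T_B$ admits a Sarig-type finite-to-one H\"older extension $\pi\colon\Sigma\to\mathcal M$ onto a subset of the billiard phase space, with $\pi(\Sigma)$ of full measure for every invariant probability measure; by the uniform hyperbolicity estimates (2.2)--(2.3) of \cite{Baladi-Demers-2018} every such measure, and every $T_B$-periodic orbit meeting $\pi(\Sigma)$, is $\Lambda_B$-hyperbolic. Being Sarig-type, the extension has the Bowen property, so the machinery behind Theorem~\ref{theorem-SurfDiffCoding} applies and, after a recoding and a magic-word isomorphism, I may assume in addition that $\pi$ restricts to a bijection between Borel sets $\Sigma'\subseteq\Sigma$ and $M'\subseteq\mathcal M$ carrying every $\Lambda_B$-hyperbolic measure; in particular the coding has degree one on the relevant set.

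Next I would do the entropy bookkeeping. Since $\pi$ is finite-to-one it preserves entropy, so the Gurevich entropy of $\Sigma$ equals $\sup\{h(T_B,\nu):\nu\in\Proberg(T_B)\}=h_*$ by \cite{Baladi-Demers-2018}, which also provides the unique measure of maximal entropy $\nu_*$. As $\nu_*$ is $\Lambda_B$-hyperbolic it lifts to an ergodic measure of maximal entropy $\hat\nu$, living on an irreducible component $\Sigma_0$ of Gurevich entropy $h_*$, positive recurrent by the Gurevich--Sarig theory. Since $\hat\nu$ is carried by $\Sigma'$, the map $\pi$ conjugates $(\Sigma_0,S,\hat\nu)$ to $(\mathcal M,T_B,\nu_*)$; as $\nu_*$ is mixing by \cite{Baladi-Demers-2018}, $\hat\nu$ is mixing, which forces $\Sigma_0$ to have period one. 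Then the classical periodic-orbit estimate for topologically mixing positive recurrent countable Markov shifts — the one used for Theorem~\ref{theorem-SurfDiffPeriodic} — gives $\liminf_{n\to\infty}e^{-n h_*}\Size{\{x\in\Sigma_0:S^nx=x\}}\ge 1$. Finally I would descend: an $S^n$-fixed point of $\Sigma_0\cap\Sigma'$ of minimal period $n$ maps under $\pi$ injectively to a $\Lambda_B$-hyperbolic $T_B$-periodic point of minimal period $n$, while the $S^n$-fixed points of $\Sigma_0$ that are discarded — those of non-minimal period, $O(n\,e^{n h_*/2})$ of them, and those avoiding the magic word, which lie in a sub-Markov-shift of strictly smaller Gurevich entropy — number only $o(e^{n h_*})$; hence $\Size{\per_{\Lambda_B}(T_B,n)}\ge(1-o(1))e^{n h_*}$.

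The hard part is the same as for Theorem~\ref{theorem-SurfDiffPeriodic}: $\Sigma$ is non-compact, so the last estimates are not formal — one must genuinely bound the periodic points living in the infinitely many lower-entropy components of $\Sigma$ and in $\Sigma_0\setminus\Sigma'$, and verify that forbidding the magic word strictly lowers the Gurevich entropy of the maximal component. This is exactly where the Bowen property and the quantitative theory of magic-word isomorphisms are needed; everything billiard-specific — the availability of a Sarig-type coding, and the existence, uniqueness, $\Lambda_B$-hyperbolicity and mixing of $\nu_*$ — is imported from \cite{Lima-Matheus-2016} and \cite{Baladi-Demers-2018}.
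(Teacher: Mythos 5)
Your setup --- the Lima--Matheus coding, the Baladi--Demers m.m.e.\ $\mu_*$ with $h(T_B,\mu_*)=h_*$, entropy preservation under the finite-to-one $\pi$, lifting $\mu_*$ to a positively recurrent irreducible component of period one, and the periodic-orbit asymptotics --- is exactly the paper's route. The gap is in your final counting step. The paper first applies the Main Theorem to replace $\pi$ by a coding that is injective on the \emph{whole} new Markov shift, and only then applies Lemma~\ref{lem-barchi2} to the positively recurrent, period-one component carrying the lift of $\mu_*$: every minimal-period-$n$ point of that component injects into $\per_{\Lambda_B}(T_B,n)$, so the lower bound transfers with no error term. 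You instead keep injectivity only on a subset $\Sigma'\subset\Sigma_0$ (the magic-word set) and recover the count by \emph{subtracting} the $S^n$-fixed points of $\Sigma_0\setminus\Sigma'$. That forces you to prove an upper bound, namely that the $n$-periodic points of $\Sigma_0$ avoiding the magic word number $o(e^{nh_*})$, and your justification --- that they lie in a sub-Markov shift of strictly smaller Gurevich entropy --- is neither proved nor true in general: for a positively recurrent shift that is not SPR, the entropy ``at infinity'' can equal $h_*$, so periodic orbits escaping to infinity can avoid any fixed word while growing at a rate arbitrarily close to $e^{nh_*}$. (This is precisely the distinction drawn in the paper's footnote contrasting Lemma~\ref{lem-barchi2} with the SPR-based estimates of \cite{Buzzi-QFT}.) Even granting a strict entropy drop, the forbidden-word subshift of a countable-state shift can have infinitely many irreducible components, so a componentwise entropy bound does not control the total count. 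In short, you have misplaced the ``hard part'': neither the proof of Theorem~\ref{theorem-SurfDiffPeriodic} nor the billiard argument ever bounds discarded periodic points; the Main Theorem is invoked precisely so that nothing need be discarded.

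Two smaller imprecisions. First, \cite{Lima-Matheus-2016} gives $\mu(\pi(\Sigma^\#))=1$ only for $\chi$-hyperbolic \emph{$T$-adapted} measures, not for every invariant measure; this suffices because \cite{Baladi-Demers-2018} shows $\mu_*$ is $T$-adapted and one chooses $\Lambda_B<\chi<\chi(\mu_*)$. Second, the $\Lambda_B$-hyperbolicity of the $T_B$-orbits coded by periodic symbolic sequences is obtained in the paper by the chart estimates of Lemma~\ref{lem-barchi1} (transposed to the Lima--Matheus setting), rather than quoted directly from eqs.\ (2.2)--(2.3) of \cite{Baladi-Demers-2018}; your shortcut is defensible for a uniformly hyperbolic billiard but should at least be stated as such.
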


This strengthens \cite[Cor. 2.7]{Baladi-Demers-2018} by eliminating the possibility of a period, counting the periodic orbits by their minimal periods, and replacing the positive lower bound by the integer $1$.

\medbreak

We derive these results by proving a general result about a large class of symbolic dynamics, see our Main Theorem below.

\medbreak

Theorem~\ref{theorem-SurfDiffCoding} improves on Sarig's coding by making it injective. One would also like to have an image as large as possible. The following shows that, in some sense, one cannot much improve on Sarig's result in this direction:

\begin{theorem}\label{prop-Holder-opti}
Let $f\in\Diff^r(M)$ be a diffeomorphism of a closed surface with $r>1$. Then there exist  a Markov shift $S:X\to X$ and a  map $\pi:(S,X)\to (f,M)$ such that $f\circ\pi=\pi\circ S$ and:
\begin{enumerate}[(i)]
  \item for all $\mu\in\Proberg(f)$ with positive entropy, there is $\nu\in\Prob(S)$ with $\pi_*(\nu)=\mu$;
 \item $\pi$ is H\"older-continuous for the standard metric on $X$;
\end{enumerate}
if and only if the exponents of ergodic invariant probability measures with positive entropy are bounded away from zero.
\end{theorem}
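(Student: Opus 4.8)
We establish the two implications in turn.

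\smallskip
\emph{Sufficiency ($\Leftarrow$).} Suppose $\chi>0$ is such that no exponent of a positive‑entropy ergodic measure has modulus $<\chi$. On a closed surface every $\mu\in\Proberg(f)$ with $h(\mu)>0$ is automatically hyperbolic: the Ruelle inequality for $f$ gives $\lambda^+(\mu)\ge h(\mu)>0$ and, applied to $f^{-1}$, $-\lambda^-(\mu)\ge h(\mu)>0$; combined with the hypothesis such a $\mu$ is $\chi$‑hyperbolic. Since $r>1$ forces $Df$ to be H\"older, Theorem~\ref{theorem-SurfDiffCoding} applies with this $\chi$ (and any $\chi'\in(0,\chi)$) and produces a Markov shift $S\colon X\to X$ and an injective H\"older map $\pi\colon X\to M$ with $\pi\circ S=f\circ\pi$ and $\pi(X)$ of full measure for every $\chi$‑hyperbolic measure, hence for every positive‑entropy measure. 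As a Borel injection between standard Borel spaces, $\pi$ has Borel image and Borel inverse, so $\nu:=(\pi^{-1})_*\big(\mu|_{\pi(X)}\big)$ is $S$‑invariant with $\pi_*\nu=\mu$ (indeed $h(\nu)=h(\mu)$); this is~(i), and~(ii) is the H\"older continuity of $\pi$.

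\smallskip
\emph{Necessity ($\Rightarrow$).} We prove the contrapositive. Assume the exponents are \emph{not} bounded away from zero and fix ergodic $\mu_n$ with $h(\mu_n)>0$ and $\chi_n:=\min\{\lambda^+(\mu_n),-\lambda^-(\mu_n)\}\to0$; the two Ruelle bounds give $h(\mu_n)\le\chi_n\to0$ as well. Suppose, for a contradiction, that a Markov shift $S\colon X\to X$ and a $\beta$‑H\"older factor map $\pi\colon(S,X)\to(f,M)$ (for the standard metric) satisfy~(i). The essential preliminary step is to \emph{reduce to a coding of finite degree}: using the degree theory for Markov‑shift extensions and the magic‑word technology set up in this paper, one replaces $(S,X,\pi)$ by a H\"older extension of finite degree whose image still carries every positive‑entropy measure. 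For such an extension the fibres are a.e.\ finite and carry no relative entropy, so every ergodic lift $\nu$ of an ergodic $\mu$ satisfies $h(\nu)=h(\mu)$. This reduction is where the real difficulty lies.

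\smallskip
Granting it, lift $\mu_n$ to an ergodic $\nu_n$ with $h(\nu_n)=h(\mu_n)$, and abbreviate $\mu=\mu_n$, $\nu=\nu_n$, $h=h(\mu)>0$, $\chi=\chi_n$. By Shannon--McMillan--Breiman, for $\nu$‑a.e.\ $x$ the depth‑$k$ cylinder $[x]_k=\{y:y_i=x_i,\ |i|\le k\}$ satisfies $\nu([x]_k)=e^{-(2k+1)(h+o(1))}$ and has diameter $\le\theta^{k}$, where $\theta\in(0,1)$ is the base of the standard metric. If $y\in[x]_k$ and $|j|\le k$ then $S^jx,S^jy$ agree on $\{-(k-|j|),\dots,k-|j|\}$, so $\beta$‑H\"older continuity together with $\pi\circ S=f\circ\pi$ gives $d(f^j\pi x,f^j\pi y)\le C\theta^{\beta(k-|j|)}$; hence $\pi([x]_k)$ lies in the two‑sided Bowen ball $B_f\big(\pi x;\lfloor k/2\rfloor,\epsilon_k\big)$ with $\epsilon_k=C'\theta^{\beta k/2}$. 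Since $\pi^{-1}B_f(\pi x;\lfloor k/2\rfloor,\epsilon_k)$ is open in $X$ and contains $[x]_k$, we get, for $\mu$‑typical $p$ and all large $k$,
$$\mu\big(B_f(p;\lfloor k/2\rfloor,\epsilon_k)\big)\ \ge\ e^{-(2k+1)(h+o(1))}.$$
On the other hand $\mu$ is a hyperbolic surface measure, so Pesin theory---the Ledrappier--Young formula together with the Barreira--Pesin--Schmeling product structure---gives stable and unstable pointwise dimensions $h/(-\lambda^-)$ and $h/\lambda^+$, both $\le1$ by the Ruelle bounds; uniformizing over a Pesin block of positive measure one obtains, for $p$ in that block, small $\epsilon$ and large $m$,
$$\mu\big(B_f(p;m,\epsilon)\big)\ \le\ C(p,\delta)\,\epsilon^{\,h/\chi}\,e^{-(2h-\delta)m}\qquad(\delta>0\ \text{arbitrary}).$$
Taking $m=\lfloor k/2\rfloor$, $\epsilon=\epsilon_k$, applying $-\tfrac1k\log$, and letting $k\to\infty$ and then $\delta\to0$ yields $2h(\nu)\ge c\,\beta\,\tfrac h\chi+h$ for a constant $c>0$ depending only on the metric normalization. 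As $h(\nu)=h>0$, this forces $\chi\ge c\beta>0$, contradicting $\chi=\chi_n\to0$.

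\smallskip
\emph{Main obstacle.} The whole argument hinges on the first reduction: an \emph{a priori} arbitrary H\"older coding may be wildly infinite‑to‑one, and only after replacing it by a coding of \emph{finite degree}---still covering all positive‑entropy measures---does one have $h(\nu)=h(\mu)$ for lifts, which is precisely what makes the Bowen‑ball estimate bite. Supplying that reduction is the role of the degree and magic‑word machinery developed in this paper. A secondary, routine point is the uniformity of the Bowen‑ball upper bound along the coupled limit $m\to\infty$, $\epsilon_m\to0$: one works on a single positive‑measure Pesin block and uses that the sum of the two pointwise dimensions is $\le2$ to keep the tempering error linear in $m$ with a slope small independently of $n$.
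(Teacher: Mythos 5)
Your sufficiency direction is correct and is exactly the paper's: apply Theorem~\ref{theorem-SurfDiffCoding} (Ruelle's inequality makes every positive-entropy ergodic measure hyperbolic, and the hypothesis makes it $\chi$-hyperbolic for a uniform $\chi$). The problem is the necessity direction. Your entire argument there hinges on the ``reduction to a coding of finite degree,'' which you correctly identify as the crux but do not supply --- and it is not supplied by the paper either. The degree/magic-word machinery of Sections~\ref{s-bowen}--\ref{s-main} operates on semiconjugacies that are \emph{already} finite-to-one and Bowen with a locally finite relation; nothing in the paper converts an arbitrary H\"older semiconjugacy satisfying only (i)--(ii) (which may be infinite-to-one and need not be Bowen) into a finite-to-one one whose image still carries all positive-entropy measures. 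Without that reduction, an ergodic lift $\nu$ of $\mu$ only satisfies $h(\nu)\geq h(\mu)$, possibly strictly, and then your final inequality $2h(\nu)\geq c\beta h(\mu)/\chi + h(\mu)$ yields no contradiction when $h(\nu)\gg h(\mu)$. (Even granting the reduction, the uniform two-sided Bowen-ball upper bound $\mu(B_f(p;m,\epsilon))\leq C\epsilon^{h/\chi}e^{-(2h-\delta)m}$ along the coupled limit $\epsilon_m\to 0$, $m\to\infty$ is a substantial piece of Pesin theory that you assert rather than prove.)

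The paper's necessity argument (Proposition~\ref{prop-Holder-opti1}) is much more elementary and sidesteps entropy entirely, which is why no finite-to-one reduction is needed. Given $\mu$ with positive entropy, property (i) provides an ergodic lift $\nu$; since $\mu$ is atomless, $\nu$ is not carried by a periodic orbit, hence lives on a nontrivial irreducible component of the Markov shift, so for $\nu$-a.e.\ $x$ there is $z\neq x$ with $z_n=x_n$ for all $n\geq 0$. Then $d(S^nx,S^nz)\leq Ce^{-n}$, and $\alpha$-H\"older continuity of $\pi$ gives $d(f^n\pi(x),f^n\pi(z))\leq C'e^{-\alpha n}$, which places $\pi(z)$ on the Pesin stable manifold of $\pi(x)$ with contraction rate at least $\alpha$; by the characterization of convergence rates on Pesin stable manifolds this forces $-\lambda^-(f,\mu)\geq\alpha$, and the symmetric argument for $f^{-1}$ gives $\lambda^+(f,\mu)\geq\alpha$. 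You should replace your necessity argument by this one (or else actually prove the finite-degree reduction, which would be a new result).
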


\subsection{General theorem}

The above will be a consequence of an abstract theorem about factors of Markov shifts. A \emph{symbolic system} $(S,X)$ is  some shift-invariant subset of $\cA^\ZZ$ where $\cA$, the \emph{alphabet},  is a countable (possibly finite) set, together with the action of the shift $S:(x_n)_{n\in\ZZ}\mapsto(x_{n+1})_{n\in\ZZ}$. We equip $X$ with the standard distance: $d(x,y):=\exp(-\inf\{|n|:x_n\ne y_n\}$).
A \emph{semiconjugacy} $\pi:(S,X)\to(T,Y)$ between dynamical systems $(S,X)$ and $(T,Y)$ is a map $\pi:X\to Y$ such that $\pi\circ S=T\circ\pi$ and $\pi(X)\subset Y$.
A {\it one-block code} is a semiconjugacy $\pi:X\to Y$ between symbolic systems  such that $\pi(x)=(\Pi(x_n))_{n\in\ZZ}$ for some map  between the alphabets (this map $\Pi:\cA\to\cB$ is the \emph{code}).
A  map $\phi$ between two metric spaces is \emph{$1$-Lipschitz} if $d(\phi(x),\phi(y))\leq d(x,y)$ for all pairs of points $x,y$.

Recall the following definition from \cite{BoyleBuzzi-2017}, taken from R. Bowen's analysis of Markov partitions \cite{Bowen-AxiomA}. 

\begin{definition}[Boyle-Buzzi]\label{def-Bowen}
A  semiconjugacy $\pi:(S,X)\to (T,Y)$ satisfies the {\bf Bowen property}  if $X$ is symbolic and if there is a reflexive and symmetric relation~$\sim$ on its alphabet $\cA$ such that, for all $x,y\in X$,
 \begin{equation}\label{eq-Bowen}
  \pi(x) = \pi(y) \iff \forall n\in\ZZ\;\; x_n\sim y_n,
 \end{equation}
in which case we  say that  $x,y\in X$ are \emph{Bowen equivalent} and write $x\BE y$.

The relation $\sim$ on $\cA$ is called a \emph{Bowen relation} for $\pi$ (or admitted by $\pi$). It is said to be \emph{locally finite} if $\{b\in\cA:b\sim a\}$ is finite for each $a\in\cA$.
\end{definition}

We note that the Bowen property generalizes both:
 \begin{itemize}
  \item[-] D. Fried's \emph{finitely presented systems} \cite{Fried-1987} which are exactly the continuous Bowen factors of subshifts of finite type;
  \item[-] \emph{one-block codes} which admits as a transitive Borel relations the equivalence relations defined by their codes. There is a partial converse: any Bowen semiconjugacy $\pi$ with a transitive Bowen relation $\sim$ can be written as $\pi=\psi\circ\Pi$ where $\Pi$ is the one-block code defined by $a\mapsto \{b:b\sim a\}$ and $\psi:\Pi(X)\to\pi(X)$ is a Borel conjugacy.
 \end{itemize}

\medbreak

We show:

\begin{maintheorem}\label{maintheorem}
Let  $(S,X)$ be a Markov shift on some alphabet $\cA$. Let $X^\#$ be its \emph{regular part}, i.e., the set of sequences $x\in X$ such that, for some $u,v\in\cA$, 
 $$
   \text{ $u$ occurs infinitely many times in $(x_n)_{n\leq0}$ and $v$ occurs infinitely many times in $(x_n)_{n\geq0}$}.
 $$
Let $\pi:(S,X^\#)\to(T,Y)$ be a Borel semiconjugacy such that:
\begin{itemize}
  \item[-] $(T,Y)$ is a Borel automorphism;
  \item[-] $\pi$ is finite-to-one, i.e., $\pi^{-1}(y)$ is finite for every $y\in Y$;
  \item[-] $\pi$ has the Bowen property with respect to a locally finite relation on $\cA$.
\end{itemize}   
Then there are a Markov shift $(\tilde S,\tilde X)$ and a $1$-Lipschitz map $\phi:\tilde X\to X^\#$ such that $\pi\circ\phi:\tilde X\to Y$ defines an injective  semiconjugacy and $\pi\circ\phi(\tilde X^\#)$ carries all invariant measures of $\pi(X^\#)$. 
\end{maintheorem}

Remark that if $\pi$ is continuous or H\"older-continuous, then so is $\pi\circ\phi$.

\subsection{Further results, comments, and questions}
Note that a map $\phi:\tilde X\to X$  is $1$-Lipschitz if and only if there is a length-preserving map $\Phi:\bigcup_{n\geq0}\cL_{2n+1}(\tilde X)\to \bigcup_{n\geq0}\cL_{2n+1}(X)$ such that $\phi(x)_{[-n,n]}=\Phi(x_{[-n,n]})$ for all $n\geq0$. The $1$-Lipschitz map in the above theorem  may fail to be a one-block code because it may fail to commute with the shift.

\subsubsection*{Ingredients of the proof} 
We adapt classical ideas from the theory of subshifts of finite type. The first part builds on tools from finite equivalence theory and more specifically joint work with Mike Boyle \cite{BoyleBuzzi-2017}. This leads to Theorem~\ref{thm-Bowen-quotient} which is an abstract version of an unpublished result of Sarig \cite{Sarig-Private}. The second part of the proof involves ideas from magic word isomorphisms and the degree of almost conjugacies \cite[chap. 9]{LindMarcus-book}. It allows to partition according to the number of preimages while preserving the Markov structure. We conclude by injectively coding subsets with larger and larger numbers of preimages.

\subsubsection*{Good coding for given measures}
We can specialize our results to a given measure of interest. For instance, given a surface diffeomorphism with positive topological entropy and a distinguished ergodic measure maximizing the entropy $\mu$, we obtain an irreducible Markov shift $X$ and a H\"older-continuous conjugacy $\pi:X\to M$ such that $\pi(X)$ has full $\mu$-measure. This was implicit in \cite[Prop. 6.3]{BoyleBuzzi-2017}. We refer to \cite{BCS} for further results in this direction.

\subsubsection*{Bounds for periodic points}
Kaloshin \cite{Kaloshin-SuperExp} has shown that, $C^r$-generically ($1\leq r<\infty$), the number of periodic points grows arbitrarily fast with the period. However, these periodic points have Lyapunov exponents going to zero. In fact, Burguet~\cite{Burguet-PeriodicPoints} has shown the following logarithmic estimate, for any $C^\infty$ surface diffeomorphism:
 $$
  \forall\chi<h_\top(f)\;  \lim_{n\to\infty, p|n} \frac1n\log|\per_\chi(f,n)| = h_\top(f).
 $$

\medbreak
\noindent{\bf Question 1.} {\it Is there a $C^\infty$ surface diffeomorphism $f$ with positive entropy such that, for some $\chi>0$:}
 $$
     \limsup_{n\to\infty} e^{-n\cdot h_\top(f)} \Size{\per_\chi(f,n)} =\infty\; ?
  $$
  
\subsubsection*{Beyond surface diffeomorphisms}
Ben Ovadia's higher-dimensional generalization \cite{benovadia-coding} of Sarig's coding also yields finite-to-one semiconjugacies that are Bowen with respect to a locally finite relation. Hence our abstract theorem also applies in this setting. 

\subsubsection*{Better symbolic representations}
For a topologically transitive surface diffeomorphism, S.~Crovisier, O.~Sarig, and the author \cite{BCS} have shown that, for any given parameter $\chi>0$,  there is a finite-to-one, H\"older-continuous \emph{transitive} symbolic dynamics coding a subset carrying all $\chi$-hyperbolic measures. Applying our main theorem makes this coding injective but destroys the transitivity.  We ask:

\medbreak
\noindent{\bf Question 2.} {\it For a topologically transitive $C^\infty$ diffeomorphism of a closed surface and any number $\chi>0$, can one  get a H\"older-continuous injective coding by a \emph{transitive} Markov shift a subset carrying all $\chi$-hyperbolic measures?}
\medbreak

Our Theorems \ref{thm-ae-injectivity}~and~\ref{thm-large-injectivityset} below provide partial solutions. We build a H\"older-continuous, finite-to-one coding by a transitive Markov shift whose injectivity set is ``large'' in a weaker sense than above: it has full measure with respect to a given measure or for all fully supported measures.
These theorems are applied to surface diffeomorphisms in \cite{BCS}.
\medbreak

To capture all hyperbolic measures, one can apply Sarig's construction countably many times with a parameter $\chi$ decreasing to $0$. One obtains a sequence of semiconjugacies with larger and larger images but smaller and smaller H\"older exponents. In \cite{BoyleBuzzi-2017}, together with M.~Boyle, we were able to "fuse" all these semiconjugacies by using a Borel construction.

\medbreak
\noindent{\bf Question 3.} {\it Given a surface diffeomorphism, can one get a \emph{continuous} finite-to-one coding by a Markov shift of a subset carrying \emph{all hyperbolic measures}? Can it be done injectively?}
\medbreak

Because of Theorem~\ref{prop-Holder-opti} one cannot ask for a H\"older-continuous semiconjugacy.

\subsubsection*{Local compactness}
Our proof of the Main Theorem does not preserve local compactness. We do not know if it can be done.  The following very natural question asked by the referee remains open:

\medbreak
\noindent{\bf Question 4.} {\it In Theorem~\ref{maintheorem}, is it possible to code using a locally compact Markov shift?}
\medbreak

In Appendix~\ref{appendix-Q}, we provide a partial answer: we obtain local compactness but  injectivity holds only after restricting to the regular part.

\subsection{Outline of the paper}
In Section~\ref{s-definitions}, we recall some basic definitions and make some comments about Bowen relations. In Section~\ref{s-bowen}, we introduce Bowen quotients inspired by a classical construction of Manning \cite{Manning-1971} from the theory of Markov partitions. These are an abstract version of a construction of Sarig \cite{Sarig-Private}. The proof rests on Proposition \ref{prop-resolving}, which adapts lemmas from the theory of finite equivalence of subshifts of finite type due to Hedlund \cite{Hedlund-1969} and Coven and Paul \cite{Coven-Paul-1974,Coven-Paul-1975,Coven-Paul-1977}. 
In Section~\ref{s-magic}, we combinatorially characterize the fibers with minimal cardinality by adapting the notion of magic word from the theory of almost conjugacy of shifts of finite type  (see \cite[chap.~9]{LindMarcus-book}).

In Section~\ref{s-main}, we show how to a get a coding with a large injectivity set, especially with respect to a given measure or to all fully supported measures and then deduce the main theorem from the previous constructions. We proceed by induction on the number of preimages. The Bowen quotients make the semiconjugacy injective where its fibers had a given  cardinality and then discard these points. The magic word theory preserves the Markov structure and the Bowen property.

In Section~\ref{s-applications}, we apply our main theorem to Sarig's coding of surface diffeomorphisms \cite{Sarig-JAMS} and prove Theorems \ref{theorem-SurfDiffCoding}~and~\ref{theorem-SurfDiffPeriodic} using Newhouse \cite{Newhouse-1989} (for smoothness) and Buzzi-Crovisier-Sarig \cite{BCS} (for transitivity). 
In Section~\ref{s-obstruction}, we prove Theorem~\ref{prop-Holder-opti} characterizing surface diffeomorphisms with H\"older-continuous codings. In the Appendices we further discuss the Bowen relation, provide a locally compact construction, and deduce Theorem~\ref{theorem-BilliardMap}.

\bigbreak

\noindent{\bf Acknowledgments.} I thank Pierre Berger, Sylvain Crovisier, Yuri Lima, and Omri Sarig for  useful comments. I especially thank  Mike Boyle for pointing out mistakes in an early version of this text and simplifying the proof of Proposition~\ref{prop-resolving}. I am also grateful to Sylvain Crovisier for discussions leading to Section~\ref{s-injectivity} and to Yuntao Zang for pointing out an imprecision in Lemma~\ref{lem-barchi1}. I finally thank the referee for both corrections and suggestions that have improved this paper.

\section{Definitions and first properties}\label{s-definitions}

\subsection{Borel systems}\label{s-Borel}

A standard Borel space is a set equipped with the Borel $\sigma$-field generated by a Polish topology (i.e., generated by a  metric making the space complete and separable, see \cite{Kechris-book} for background).
 A \emph{dynamical system}  is an automorphism $S$ of such a space $X$. We denote it by $(S,X)$ (or just $S$ or $X$ when convenient).

\medbreak

A \emph{full subset}  for $S$ is a subset of $X$ with  measure equal to $1$ for all measures\footnote{Recall that all measures in this paper are understood to be ergodic and invariant Borel probability measures.} of $S$. A subset is \emph{null} if its complement is a full subset.
We say that a property of points holds \emph{almost everywhere} (or just a.e.) without reference to a measure, if it holds on such a full subset.

\medbreak

By the Lusin-Novikov Theorem \cite[(18.10)]{Kechris-book},  the direct image of a Polish space by a countable-to-one Borel map is Borel. In fact, there is a countable partition of the Polish space into Borel subsets on which the map is injective and one can apply the Lusin-Suslin Theorem \cite[(15.2)]{Kechris-book}. Therefore:

\begin{lemma}\label{lem-null-sets}
Let $p:(S,X)\to(T,Y)$ be a Borel semiconjugacy between dynamical systems. If $\nu\in\Prob(T)$ satisfies $p^{-1}(y)$ is finite and nonempty for $\nu$-a.e. $y\in Y$, then there is $\mu\in\Prob(S)$ with $p_*(\mu)=\nu$.
In particular, if $p$ is finite-to-one and onto:
 \begin{itemize}
  \item[-]  $p_*:\Prob(S)\to\Prob(T)$ is onto;
  \item[-]  for any Borel subsets $U\subset X$, $V\subset Y$: $U$ is a null subset $\iff$ $p(U)$ is a null subset; $V$ is a null subset $\iff$ $p^{-1}(V)$ is a null subset.
  \end{itemize} 
\end{lemma}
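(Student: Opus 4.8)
The plan is to establish the first assertion — that for a given $\nu\in\Prob(T)$ with $p^{-1}(y)$ finite and nonempty for $\nu$-a.e.\ $y$ there is some $\mu\in\Prob(S)$ with $p_*\mu=\nu$ — and then to deduce the ``in particular'' statements by formal manipulation. Recall that $\Prob(\cdot)$ denotes \emph{ergodic} invariant probability measures, so the real content is getting an ergodic lift.

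For the first assertion I would first invoke the Lusin--Novikov theorem (as quoted above) to fix Borel maps $f_1,f_2,\dots$ defined on a $\nu$-conull Borel subset of $Y$ with $p^{-1}(y)=\{f_j(y):j\ge1\}$ there. Then the strata $Y_k:=\{y:|p^{-1}(y)|=k\}$ are Borel, and since $p\circ S=T\circ p$ with $S,T$ bijective one has $p^{-1}(Ty)=S(p^{-1}(y))$, so each $Y_k$ is $T$-invariant; as $\nu$ is ergodic and assigns full mass to $\bigcup_{1\le k<\infty}Y_k$, there is a unique $k_0\ge1$ with $\nu(Y_{k_0})=1$. Relabelling the $f_j$ on $Y_{k_0}$ in a Borel way yields Borel maps $s_1,\dots,s_{k_0}$ with $\{s_1(y),\dots,s_{k_0}(y)\}=p^{-1}(y)$ ($k_0$ distinct points) for $\nu$-a.e.\ $y$. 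Set $\mu_0:=\frac1{k_0}\sum_{j=1}^{k_0}(s_j)_*\nu$, equivalently $\mu_0(B)=\int_Y k_0^{-1}\,|p^{-1}(y)\cap B|\,d\nu(y)$ for Borel $B\subset X$. Plainly $p_*\mu_0=\nu$, and the substitution $y\mapsto Ty$ — using $S\bigl(p^{-1}(y)\cap S^{-1}B\bigr)=p^{-1}(Ty)\cap B$, bijectivity of $S$, and $T$-invariance of $\nu$ — gives $S_*\mu_0=\mu_0$. So $\mu_0$ is an $S$-invariant Borel probability measure lying over $\nu$, though a priori not ergodic.

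To extract an ergodic lift, I would take the ergodic decomposition $\mu_0=\int_\Omega\mu_\omega\,dP(\omega)$ and, for each $\omega$, let $R_\omega$ be the probability measure on ergodic $T$-invariant measures representing the (invariant, possibly non-ergodic) measure $p_*\mu_\omega$; then $R:=\int_\Omega R_\omega\,dP(\omega)$ represents $\int p_*\mu_\omega\,dP=p_*\mu_0=\nu$, and since $\nu$ is itself ergodic, the uniqueness part of the ergodic decomposition theorem forces $R=\delta_\nu$, hence $R_\omega=\delta_\nu$, i.e.\ $p_*\mu_\omega=\nu$, for $P$-a.e.\ $\omega$; any such $\mu_\omega$ is the desired $\mu$. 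Granting this, the rest is routine: since $p$ is finite-to-one and onto, the hypothesis holds for every $\nu\in\Prob(T)$, so $p_*\colon\Prob(S)\to\Prob(T)$ is onto; for Borel $U\subset X$, $p(U)$ is Borel by Lusin--Novikov; the implications $[p(U)\text{ null}\Rightarrow U\text{ null}]$ and $[V\text{ null}\Rightarrow p^{-1}(V)\text{ null}]$ are immediate from $U\subset p^{-1}(p(U))$ and $\mu(p^{-1}(V))=(p_*\mu)(V)$; the implication $[p^{-1}(V)\text{ null}\Rightarrow V\text{ null}]$ follows by lifting an arbitrary $\nu\in\Prob(T)$ to $\mu\in\Prob(S)$ and using $\nu(V)=\mu(p^{-1}(V))$; and for $[U\text{ null}\Rightarrow p(U)\text{ null}]$ one fixes $\nu\in\Prob(T)$, forms the invariant lift $\mu_0$ above, notes $\mu_0(U)=0$ (each ergodic component of $\mu_0$ lies in $\Prob(S)$ and hence gives $U$ mass $0$), and reads off from the formula for $\mu_0$ that $p^{-1}(y)\cap U=\emptyset$ for $\nu$-a.e.\ $y$, i.e.\ $\nu(p(U))=0$.

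The one genuinely nontrivial step is producing an $S$-\emph{invariant} probability measure lying over $\nu$: on a standard Borel space there is no compactness, so one cannot average a Borel section à la Krylov--Bogolyubov, and the fiber-uniform measure $\mu_0$ is exactly the device that makes invariance a one-line change of variables. Passing from $\mu_0$ to an ergodic lift is then soft, via uniqueness of the ergodic decomposition. Everything else — the Borel selection of the finitely many preimages, the Borelness of the strata $Y_k$, and the measurability of $\omega\mapsto R_\omega$ — is standard descriptive set theory resting on the Lusin--Novikov and Lusin--Suslin theorems already invoked.
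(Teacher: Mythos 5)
Your proof is correct and follows the same route the paper takes: the paper states this lemma as a direct consequence of the Lusin--Novikov and Lusin--Suslin theorems without writing out the details, and your argument (Borel sections, the $T$-invariant strata $Y_k$, the fiber-uniform invariant lift $\mu_0$, and extraction of an ergodic component over $\nu$ via uniqueness of the ergodic decomposition) is exactly the standard filling-in of that sketch. All the steps check out, including the change-of-variables verification of $S_*\mu_0=\mu_0$ and the four implications in the second bullet.
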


\subsection{Symbolic dynamics}\label{s-symbolic}
Let  $\cA$ be a countable (possibly finite) discrete set, called the alphabet. The shift is $(x_n)_{n\in\ZZ}\mapsto(x_{n+1})_{n\in\ZZ}$. A \emph{symbolic system} $(S,X)$ is the restriction $S$ of the shift to some invariant subset $X$ of $\cA^\ZZ$ with the usual distance: $d(x,y):=\exp\left(-\inf\{|n|:x_n\ne y_n\}\right)$. A symbolic system needs not be closed.

For $x\in X$ and integers $a\leq b$, $x_{[a,b]}=x_ax_{a+1}\dots x_b\in\cA^{b-a+1}$ and $x_{[a,b)}=x_{[a,b-1]}$.
The set of \emph{$X,n$-words} is $\cL_n(X):=\{x_{[0,n)}:x\in X\}$ and the language is $\cL(X):=\bigcup_{n\geq0} \cL_n(X)$. A word $w\in\cL_n(X)$ has length $|w|:=n$. The word $w$ \emph{occurs} at $n\in\ZZ$ in some $x\in X$ if $x_{[n,n+|w|)}=w$. An $n$-word $w$ defines a cylinder:
 $$
     [w]_X := \{y\in X:y_{[0,n)}=w\}.
 $$
As usual, words differing by an integer translation of their indices are identified.

Sarig's \emph{regular set} is the subset $X^\#\subset X$ of sequences $x\in X$ such that there are $u,v\in\cA$ satisfying:
 $$
     \{n\geq0:x_{-n}=u\},\; \{n\geq0:x_n=v\} \text{ are both infinite}.
 $$
If the alphabet is finite, then $X^\#=X$. If $\pi:X\to Y$ is a semiconjugacy with $X$ a symbolic system, its \emph{regular part} is the restriction $\pi^\#:X^\#\to Y$. We also write $\pi_\#$ or $\pi$ when convenient.

A sequence $x\in X$ is \emph{word recurrent}  if any word $w$ that occurs in $x$ occurs infinitely often in both $x_{(-\infty,0]}$ and $x_{[0,\infty)}$. We say that $w$ \emph{occurs i.o. in} $x$ or that $x$ \emph{sees i.o. $w$}.\label{def-sees-io} We denote by $X^\rec\subset X$ the set of such sequences. Note that it carries all invariant probability measures on $X$ (in particular, it contains all periodic orbits). We have the obvious inclusion $X^\rec\subset X^\#$.

\medbreak

A \emph{Markov shift} $(S,X)$ is a symbolic system over some alphabet $\cA$ such that $X$ can be characterized as the set of bi-infinite paths on some simple, directed graph\footnote{A simple directed graph is a graph with oriented arrows and such that for any vertices $a,b$ there is at most one arrow from $a$ to $b$.} $\cG$, that is, 
 $$
    X := \{x\in\cA^\ZZ:\forall n\in\ZZ\; x_n\stackrel{\cG}\to x_{n+1}\}.
 $$
The graph $\cG$  \emph{describes} the Markov shift $X$.
 
\subsection{Remarks about the Bowen property} We comment on related notions, the (non)\-uniqueness of the symmetric relation involved in its characterization and its eventual extension from the regular part to the whole of a factor.

\smallbreak

\noindent
1) A semiconjugacy may have the Bowen property without being Borel. Indeed, if $\pi$ is Bowen, then so is $\varphi\circ\pi$ for any self-conjugacy $\varphi:Y\to Y$ (i.e., a bijection that commutes with the dynamics). 

\smallbreak\noindent
2) Being Bowen and finite-to-one are independent properties of semiconjugacies (neither implies the other) as shown by the following examples: (i) $\pi:\{0,1,2\}^\ZZ\to\{0,1\}^\ZZ$ given by the block code $a\mapsto b\mod 2$ is Bowen and infinite-to-one; (ii) $\pi:\{0,1\}^\ZZ\to\mathbb S^1$ given by $\pi(x)=\exp 2i\pi\sum_{n\geq0} 2^{-n-1}x_n$ which is at most 2-to-1 but cannot be Bowen since it is neither injective nor constant.

\smallbreak\noindent
3) On the one hand, many Bowen semiconjugacies do not admit any transitive Bowen relations.\footnote{For instance, a continuous Bowen factor of a compact symbolic system with a transitive Bowen relation must be zero dimensional.} On the other hand, the equivalence between sequences must be transitive. Thus Bowen relations are special reflexive and symmetric relations on the alphabet. Bowen \cite[p.~13]{Bowen-AxiomA} asked:

\medbreak

{\sc Problem (Bowen).} {\it Let  $A,B$ be two $n\times n$-matrices with entries zero or one. Let  $\BE$ be the relation defined on $X:=\{x\in\{1,\dots,n\}^\ZZ:\forall p\in\ZZ$ $A(x_p,x_{p+1})=1\}$ by $x\BE y\iff\forall n\in\ZZ$ $B(x_n,y_n)=1$. Decide whether this relation is transitive. If so, decide whether the shift on $X/\BE$ is topologically conjugate to the non-wandering set of a uniformly hyperbolic system.}

\smallbreak\noindent
4) A given Bowen semiconjugacy can admit distinct Bowen relations. See Appendix~\ref{appendix-Bowen} for the canonical relation defined by a semiconjugacy.

\smallbreak\noindent
5) In our main examples the Bowen property hold in the regular part of the symbolic system. Even when the semiconjugacy has a unique uniformly continuous extension to the whole symbolic system, the latter may fail to satisfy the Bowen property, see Appendix~\ref{appendix-Bowen}.

\section{Bowen Quotients}\label{s-bowen}

We introduce our basic construction: the Bowen quotient. 
Given an integer $N\geq1$  and a first semiconjugacy $\pi$, we are going to build another semiconjugacy $\pi_N$ whose preimages are the sets of $N$ preimages of a common point. This is a purely combinatorial construction which:
 \begin{itemize}
  \item[--] preserves the class of finite-to-one, Bowen semiconjugacies of regular parts of Markov shifts;
  \item[--] produces $\pi_N$ which is one-to-one above the points where the first semiconjugacy was $N$-to-$1$;
  \item[--] has in its image the points with at least $N$ preimages by $\pi$, up to a null set.
 \end{itemize}
This construction is closely related to previous work with Boyle  \cite[Prop. 6.3]{BoyleBuzzi-2017}.
Similar constructions go back to Hedlund \cite{Hedlund-1969} and Coven and Paul \cite{Coven-Paul-1974,Coven-Paul-1975,Coven-Paul-1977} (for subshifts of finite type, see \cite[chap. 8 and 9]{LindMarcus-book}); Manning \cite{Manning-1971}  and Bowen \cite{Bowen-AxiomA}  (for coding of Axiom-A diffeomorphisms).

\subsection{Definition and statement}

Let $(S,X)$ be a Markov shift with alphabet $\cA$ and underlying graph $\cG$. Let $\piS:X^\#\to Y$ be a Borel semiconjugacy on the regular part $X^\#$. Assume that it admits a Bowen relation~$\sim$. Given an integer $N\geq1$, let:
 \begin{itemize}
  \item[-] $\cA_N$ be the collection of subsets $A\subset\cA$ with cardinality $N$ whose elements are pairwise related by $\sim$;
  \item[-] $\cG_N$ be the simple directed graph over $\cA_N$ with arrows: $A\stackrel{\cG_N}{\to}B$ if and only if there is a bijection $\phi:A\to B$ such that $a\stackrel{\cG}{\to}b\iff b=\phi(a)$ for all $(a,b)\in A\times B$.
\end{itemize}

\begin{definition}\label{def-Bowen-quotient}
Let $\pi:X^\#\to Y$ be a semiconugacy with a locally finite Bowen relation~$\sim$. Given an integer $N\geq1$, the \emph{Bowen quotient of order $N$} of $(\piS:X^\#\to Y,\sim)$ is $(\piS_N:X_N^\#\to Y,\stackrel N\sim)$ where:
\begin{itemize}
  \item[(BQ1)] $X_N^\#$ is the regular part of the Markov shift $X_N$ defined by the graph $\cG_N$;
   \item[(BQ2)] $\piS_N:X_N^\#\to Y$ satisfies for all $\hat x\in X_N^\#$, $\piS_N(\hat x)=\pi(x)$ for any $x\in X^\#$ s.t.  $x_n\in\hat x_n$ {\rm (}$\forall n\in\ZZ${\rm )};
     \item[(BQ3)] $A\stackrel N\sim B$ if and only if $\forall (a,b)\in A\times B\; a\sim b$.
 \end{itemize}
\end{definition}

The following definition will be convenient for our purposes.

\begin{definition}\label{def-excellent}
A semiconjugacy $\pi:X\to Y$ is  \emph{excellent} for some relation $\sim$ if $X$ is a symbolic system, $Y$ is a dynamical system and:
 \begin{itemize}
  \item[(EX1)] $\sim$ is a locally finite, reflexive and symmetric relation on the alphabet of $X$;
  \item[(EX2)] $\pi$ is Bowen with respect to the relation $\sim$;
  \item[(EX3)] $\pi$ is Borel and finite-to-one.
\end{itemize}  
\end{definition}

In the next statement and throughout this paper, $\binom{p}{q}=\frac{p!}{q!(q-p)!}$ and is zero if $q>p$.

\begin{theorem}\label{thm-Bowen-quotient}
Let $X$ be a Markov shift with regular part $X^\#$.
Let $\piS:X^\#\to Y$ be an excellent semiconjugacy for some Bowen relation $\sim$. 
Then, for any integer  $N\geq 1$, the Bowen quotient $(\piS_N:X_N^\#\to Y,\stackrel N\sim)$ of order $N$ is well-defined and excellent.

\medbreak

Moreover there is a finite-to-one, $1$-Lipschitz map $q_N:X_N\to X$  such that:
 \begin{enumerate}
  \item if $X$ is locally compact, then so is $X_N$;
  \item $\pi_N=\pi\circ q_N:X^\#\to Y$;
  \item $|\piS_N^{-1}(y)|\leq \textstyle\binom{|\piS^{-1}(y)|}{N}$ for all $y\in Y$ with equality except on a null set;
  \item $q_N:X_N^\#\to X^\#$ is proper, i.e., for any compact set $K\subset X^\#$, $q_N^{-1}(K)\cap X_N^\#$ is compact.
 \end{enumerate}

\end{theorem}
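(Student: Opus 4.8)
The plan is to verify the definition is well-posed, then check the three axioms (EX1)--(EX3) for $(\pi_N,\stackrel{N}\sim)$, and finally construct $q_N$ and establish properties (1)--(4). First I would check \emph{well-definedness} of $\pi_N$ in (BQ2): given $\hat x\in X_N^\#$, any two lifts $x,y\in\cA^\ZZ$ with $x_n,y_n\in\hat x_n$ for all $n$ satisfy $x_n\sim y_n$ for all $n$ (since each $\hat x_n\in\cA_N$ is a $\sim$-clique), so by the Bowen property \eqref{eq-Bowen} $\pi(x)=\pi(y)$; one also must check at least one such lift exists and lies in $X^\#$. Existence of a lift that is a genuine path in $\cG$ follows from the definition of $\cG_N$: an edge $\hat x_n\stackrel{\cG_N}\to\hat x_{n+1}$ is exactly a graph isomorphism of the induced bipartite adjacency, so choosing a vertex in $\hat x_0$ and propagating the bijections both ways produces a bi-infinite $\cG$-path $x$. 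That $x\in X^\#$ when $\hat x\in X_N^\#$ is where the combinatorics needs care: if $\hat u$ recurs infinitely in $\hat x_{(-\infty,0]}$, then some element $u\in\hat u$ recurs infinitely among the corresponding coordinates of $x$ --- here I would use that $\hat u$ has only $N$ elements and pigeonhole on which element of $\hat u$ the lift $x$ passes through at those times, possibly after replaying the bijections to see the return maps are consistent. This is the first place where I expect to spend real effort.

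Next, (EX1) for $\stackrel{N}\sim$: reflexivity and symmetry are immediate from (BQ3) and reflexivity/symmetry of $\sim$; local finiteness is a counting argument --- if $A\stackrel{N}\sim B$ then every $b\in B$ is $\sim$-related to some (in fact every) $a\in A$, so $B\subset\bigcup_{a\in A}\{b:b\sim a\}$, a finite set by local finiteness of $\sim$, and there are finitely many $N$-subsets of a finite set. For (EX3), $\pi_N$ is Borel because it is the composition $\pi\circ q_N$ of Borel maps once $q_N$ is constructed (or directly, as a Borel selection of a lift); finite-to-one will follow from property (3). The heart is (EX2), the Bowen property of $\pi_N$ with respect to $\stackrel{N}\sim$: I must show $\pi_N(\hat x)=\pi_N(\hat y)\iff\hat x_n\stackrel{N}\sim\hat y_n$ for all $n$. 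The ($\Leftarrow$) direction: if $\hat x_n\stackrel{N}\sim\hat y_n$ for all $n$, pick lifts $x$ of $\hat x$ and $y$ of $\hat y$; then $x_n\in\hat x_n$, $y_n\in\hat y_n$ and $\hat x_n\stackrel{N}\sim\hat y_n$ gives $x_n\sim y_n$, so $\pi(x)=\pi(y)$, i.e.\ $\pi_N(\hat x)=\pi_N(\hat y)$. The ($\Rightarrow$) direction is subtler and I expect it to be \emph{the main obstacle}: from $\pi_N(\hat x)=\pi_N(\hat y)$ I know $\pi(x)=\pi(y)$ for appropriately chosen lifts, hence $x_n\sim y_n$; but to conclude $\hat x_n\stackrel{N}\sim\hat y_n$, i.e.\ that \emph{every} $a\in\hat x_n$ is $\sim$-related to \emph{every} $b\in\hat y_n$, I need to exhibit, for each such pair $(a,b)$, lifts passing through $a$ at time $n$ and through $b$ at time $n$ respectively whose $\pi$-images still agree --- this is where word-recurrence of $\hat x,\hat y$ and the Bowen structure combine: I would use that $\hat x$ and $\hat y$ return i.o.\ to configurations where the two fibers $\pi^{-1}(\cdot)$ meet, then use local finiteness to pass to a subsequence along which the assignment of lifted coordinates stabilizes, producing the required equivalences by a closing/gluing argument in the spirit of the resolving lemmas cited from Hedlund and Coven--Paul (Proposition \ref{prop-resolving}).

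Finally the map $q_N\colon X_N\to X$. On $X_N$ I would define $q_N$ coordinatewise only after fixing a Borel rule for choosing the lift --- but to get the \emph{$1$-Lipschitz} property I must be more careful, since a naive pointwise selection need not respect the metric; the trick is that $q_N$ need not be a one-block code, only length-preserving on central words (as the remark after the Main Theorem notes), so I would define $\Phi$ on words $\hat x_{[-n,n]}\in\cL_{2n+1}(X_N)$ by choosing the lifted $\cG$-path through the central block in a way that depends only on that block, giving $\phi(\hat x)_{[-n,n]}=\Phi(\hat x_{[-n,n]})$ and hence $d(q_N\hat x,q_N\hat y)\le d(\hat x,\hat y)$. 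Finite-to-one-ness of $q_N$: a point $x\in X$ has $q_N$-preimages among $N$-cliques containing $x_n$ at each coordinate, and local finiteness of $\sim$ bounds these. Property (1), preservation of local compactness, amounts to checking $\cG_N$ is locally finite at each vertex when $\cG$ is --- again local finiteness of $\sim$ plus the clique bound. Property (2) is the definition. Property (3): the fiber $\pi_N^{-1}(y)$ injects into the set of $N$-element $\sim$-cliques inside $\pi^{-1}(y)$ coordinatewise-compatibly, giving the $\binom{|\pi^{-1}(y)|}{N}$ bound, with equality off a null set by the Lusin--Novikov/selection machinery of Lemma \ref{lem-null-sets} applied to the map sending an $N$-subset of a fiber to the corresponding point of $X_N$ --- showing \emph{every} such clique is realized by an actual element of $X_N^\#$ (not just $\cA^\ZZ$) is again a regularity/word-recurrence point, handled as in the well-definedness step. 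Property (4), properness of $q_N$ on regular parts, follows from finite-to-one-ness together with the fact that $q_N$ is continuous and $X_N^\#$, $X^\#$ carry compatible exhaustions; concretely, $q_N^{-1}(K)\cap X_N^\#$ is closed in $X_N^\#$ and, by local finiteness of $\sim$, contained in a cylinder set determined by finitely many coordinates of points in $K$, hence compact.
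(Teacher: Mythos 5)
Your overall architecture matches the paper's: lift $\cG_N$-paths to $\cG$-paths by propagating the defining bijections (this is Fact~\ref{fact-lift-quotient}), define $q_N$ by a selection rule depending only on the central block, and deduce everything else. Two remarks before the main issue. First, you misplace the difficulty in the Bowen property (EX2): the direction $\pi_N(\hat x)=\pi_N(\hat y)\Rightarrow\hat x_n\simN\hat y_n$ needs no recurrence, subsequence extraction, or closing argument. For arbitrary $a\in\hat x_0$ and $b\in\hat y_0$, Fact~\ref{fact-lift-quotient} produces lifts $x,y$ through $a,b$; by the well-definedness of $\pi_N$, which you already established (any lift has the same image), $\pi(x)=\pi_N(\hat x)=\pi_N(\hat y)=\pi(y)$, so $a=x_0\sim y_0=b$ by the Bowen property of $\pi$, and shift-equivariance finishes. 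Second, your argument that $q_N$ is finite-to-one (``preimages are among $N$-cliques containing $x_n$ at each coordinate'') only bounds each coordinate, not the number of sequences; on $X_N^\#$ the finite-to-one-ness comes instead from $q_N^{-1}(x)\subset\pi_N^{-1}(\pi(x))$ together with the $\binom{r}{N}$ bound.

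The genuine gap is in item (3), the equality $|\pi_N^{-1}(y)|=\binom{|\pi^{-1}(y)|}{N}$ off a null set. You invoke ``Lusin--Novikov machinery'' and say that realizing every $N$-clique is ``handled as in the well-definedness step'', but the well-definedness step goes the other way (from $\cG_N$-paths up to $\cG$-paths). Here you must push \emph{down}: given $N$ preimages $x^{j_1},\dots,x^{j_N}$ of $y$, you must show that $\bigl(\{x^{j_1}_n,\dots,x^{j_N}_n\}\bigr)_{n\in\ZZ}$ is an honest point of $X_N$, i.e.\ that each of these sets has exactly $N$ elements and that consecutive sets are joined by an arrow of $\cG_N$, which requires a \emph{bijective} adjacency. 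Neither is automatic: two distinct preimages of $y$ could share a symbol at some coordinate, or a symbol occurring in the fiber at time $n$ could have two successors in the fiber at time $n+1$. This is exactly what Proposition~\ref{prop-resolving} rules out, on the set $X^\reg$ of points recurrent for the three fiber-counting functions $R$, $n_+$, $n_-$; the null set in item (3) is precisely $\pi(X^\#\setminus X^\reg)$ (null by Poincar\'e recurrence plus Lemma~\ref{lem-null-sets}). You cite the resolving proposition only in your (EX2) discussion, where it is not needed; without invoking it here, the ``equality except on a null set'' claim --- and hence Fact~\ref{fact-special-lift} and the degree-spectrum computations downstream --- is unsupported.
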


\begin{definition}\label{def-deg-spectrum}
The \emph{degree spectrum} of a Borel semiconjugacy $\pi:Z\to Y$ is:
 $$
    \Delta(\pi):=\{k\geq1:\{y\in Y:|\pi^{-1}(y)|=k\}\text{ is not a null set}\}.
 $$
\end{definition}

\begin{corollary}\label{c-degspec-N}
In the setting of the above theorem,  $\pi_N(X_N^\#)\subset \pi(X^\#)$. In fact,
 \begin{equation}\label{eq-Delta-PiN}
   \Delta(\pi_N) = \left\{\binom{r}{N}:r\in\Delta(\pi)\text{ and }r\geq N\right\} 
 \end{equation}
 and, for each $r\in\Delta(\pi)$ with $r\geq N$,
   $
     \bigl\{ y\in Y: |\pi_N^{-1}(y)| = \binom{r}{N} \bigr\} $ is contained in
          $\bigl\{ y\in Y: |\pi^{-1}(y)| \geq r \bigr\}$ 
          and equal to $\bigl\{ y\in Y: |\pi^{-1}(y)| = r \bigr\}$ up to a null set.
\end{corollary}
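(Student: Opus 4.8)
\textbf{Proof plan for Corollary~\ref{c-degspec-N}.}

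The plan is to read off everything from part (3) of Theorem~\ref{thm-Bowen-quotient} together with the null-set transfer properties of finite-to-one semiconjugacies. First, the inclusion $\pi_N(X_N^\#)\subset\pi(X^\#)$ is immediate from $\pi_N=\pi\circ q_N$ (part (2)) and the fact that $q_N$ maps $X_N^\#$ into $X^\#$; any point in the image of $\pi_N$ is $\pi$ of a point of $X^\#$. So the content is the description of the degree spectrum.

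For the main identity, I would argue as follows. Fix an integer $k\ge1$ and consider $Y_k^{(N)}:=\{y\in Y:|\pi_N^{-1}(y)|=k\}$. By part (3), for every $y\in Y$ we have $|\pi_N^{-1}(y)|\le\binom{|\pi^{-1}(y)|}{N}$, and there is a \emph{null} subset $Z\subset Y$ off of which equality holds. Decompose $Y\setminus Z$ according to the value $r:=|\pi^{-1}(y)|\in\Delta(\pi)$ (recall $\pi$ is finite-to-one, so this is a well-defined nonnegative integer for $\pi(X^\#)$, and $\pi_N^{-1}(y)=\emptyset$ exactly when $r<N$ up to the null set $Z$, using $\binom{r}{N}=0$ for $r<N$). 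On each piece $\{y\in Y\setminus Z:|\pi^{-1}(y)|=r\}$ we have $|\pi_N^{-1}(y)|=\binom{r}{N}$ exactly. Hence, up to the null set $Z$,
\begin{equation*}
  Y_k^{(N)} = \bigcup_{\substack{r\in\Delta(\pi),\ r\ge N\\ \binom{r}{N}=k}} \{y\in Y:|\pi^{-1}(y)|=r\}.
\end{equation*}
Now I use that $r\in\Delta(\pi)$ means precisely that $\{y:|\pi^{-1}(y)|=r\}$ is \emph{not} null; since a finite union of null sets is null and (by the above) $Y_k^{(N)}$ differs from the displayed union by a null set, $Y_k^{(N)}$ is non-null if and only if at least one constituent set is non-null, i.e. if and only if $k=\binom{r}{N}$ for some $r\in\Delta(\pi)$ with $r\ge N$. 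This is exactly \eqref{eq-Delta-PiN}. For the final clause: fixing $r\in\Delta(\pi)$ with $r\ge N$ and taking $k=\binom{r}{N}$, the binomial map $s\mapsto\binom{s}{N}$ is strictly increasing on $\{N,N+1,\dots\}$, so the only index $s\ge N$ with $\binom{s}{N}=\binom{r}{N}$ is $s=r$; therefore the displayed union for this $k$ reduces (up to $Z$) to the single set $\{y:|\pi^{-1}(y)|=r\}$, giving that $\{y:|\pi_N^{-1}(y)|=\binom{r}{N}\}$ equals $\{y:|\pi^{-1}(y)|=r\}$ up to a null set, and is contained in $\{y:|\pi^{-1}(y)|\ge r\}$ with no exceptional set (this containment already holds pointwise, from $|\pi_N^{-1}(y)|\le\binom{|\pi^{-1}(y)|}{N}$ and strict monotonicity: if $|\pi^{-1}(y)|<r$ then $\binom{|\pi^{-1}(y)|}{N}<\binom{r}{N}$, so $|\pi_N^{-1}(y)|\ne\binom{r}{N}$).

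The only mild subtlety — and the step I would be most careful about — is the measurability bookkeeping needed to legitimately invoke ``$\Delta(\pi)$'' and ``null set'': one must know the level sets $\{y:|\pi^{-1}(y)|=r\}$ and $\{y:|\pi_N^{-1}(y)|=k\}$ are Borel (so that ``not a null set'' is meaningful), which follows from the Lusin--Novikov theorem as recalled before Lemma~\ref{lem-null-sets}, and that the exceptional set $Z$ from part (3) is genuinely a null Borel set in the sense of Section~\ref{s-Borel} (measure zero for \emph{all} invariant measures), so that adding or removing it does not change whether a set is null. Granting that, everything is elementary combinatorics of the binomial coefficient plus the observation that a finite union is null iff each term is. No genuine obstacle remains; the corollary is a direct unpacking of Theorem~\ref{thm-Bowen-quotient}(3).
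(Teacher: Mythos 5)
Your proposal is correct and follows essentially the same route as the paper: both arguments introduce the null set $Z=\{y:|\pi_N^{-1}(y)|\ne\binom{|\pi^{-1}(y)|}{N}\}$ from Theorem~\ref{thm-Bowen-quotient}(3) and then read off the degree spectrum by decomposing $Y\setminus Z$ according to $|\pi^{-1}(y)|$. Your explicit use of the strict monotonicity of $r\mapsto\binom{r}{N}$ on $\{N,N+1,\dots\}$ (to identify the unique $r$ with $\binom{r}{N}=k$ and to get the pointwise containment in $\{y:|\pi^{-1}(y)|\ge r\}$) is left implicit in the paper but is exactly the right justification.
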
 

All the claims above are straightforward consequences of the theorem, except possibly for eq.~\eqref{eq-Delta-PiN}, which we now prove. Let $Z:=\{y\in Y: |\pi_N^{-1}(y)|\ne\binom{|\pi^{-1}(y)|}{N}\}$. It is a null set by item~(3) above. If $r\in\Delta(\pi)$ and $r\geq N$, let $s:=\binom{|\pi^{-1}(y)|}{N}\geq 1$. Note that $\{y\in Y:|\pi^{-1}(y)|=r\}\setminus Z$ is not a null set and is included in  $\{y\in Y:|\pi_N^{-1}(y)|=s\}$, so $s\in\Delta(\pi_N)$. For the converse, let $s\in\Delta(\pi_N)$ so $\{y\in Y:|\pi_N^{-1}(y)|=s\}\setminus Z$ is not a null set. Hence, by item (3), $s=\binom{r}{N}$ for some $r\geq N$ such that $\{y\in Y:|\pi^{-1}(y)|=r\}$ is not a null set. Hence $r\in\Delta(\pi)$. Eq.~\eqref{eq-Delta-PiN} is proved. 

\begin{remarks}\label{rem-Bowen-Q}$ $

1. The proof of the theorem will give an explicit null set $Y_0$ where the inequality in item (3) may be strict.

2. The following example shows that   $X_N$ may fail to be irreducible even if $X$ is irreducible.  However, using the magic word theory of Section~\ref{s-magic} we will show in Theorem~\ref{thm-large-injectivityset} that one can restrict the semiconjugacy to an irreducible component of $X_N$ without diminishing the image.
\end{remarks}

{\footnotesize
\begin{example}
Let $\cG_0$ be a simple, directed graph with set of vertices $\cA_0$ and let $p$ be a positive integer. Define $\cG$ over $\cA:=\cA_0\times(\ZZ/p\ZZ)$ by: $(a,i)\stackrel{\cG}\to(b,j)\stackrel{\operatorname{def}}\iff i+1=j$ and $a\stackrel{\cG_0}\to b$. Define a symmetric relation $\sim$ on $\cA$ by $(a,i)\sim(b,j)\stackrel{\operatorname{def}}\iff a=b$ and consider a Bowen semiconjugacy with this relation. Hence, for any $N\geq1$, $\cA_N=\{ \{a\}\times I:a\in\cA_0$ and $I\subset\ZZ/p\ZZ$ with $|I|=N\}$ and $\{a\}\times I\to\{b\}\times J$ if and only if $a\to b$ and $I+1=J$.

Observe that $\cG_N$ may fail to be irreducible even when $\cG$ is irreducible. For instance, if $p=4$ and $N=2$, for any $a\in\cG_0$, $\{(a,0),(a,1)\}$ and $\{(a,0),(a,2)\}$ belong to distinct irreducible components of $\cG_N$.
\end{example}
}

\subsection{Resolving property}

We begin by studying the combinatorics of finite fibers over an orbit with some recurrence. 
Let $(S,X)$ be a Markov shift with alphabet $\mathcal A$ and $(T,Y)$ be some dynamical system. Let $\piS:X^\#\to T$ be a semiconjugacy with some Bowen relation $\sim$.   Denote the Bowen equivalence by $\BE$ and the equivalence class of $x\in X^\#$ by
  $$
  \cls{x}:=\{y \in X^\#:y\BE x\}.
  $$

Let us call a point $x\in X$ \emph{recurrent for some function} $\phi:X\to\ZZ$ if, for each $n\in\ZZ$, $\{k\in\ZZ:\phi(S^kx)=\phi(S^nx)\}$ is neither lower bounded nor upper bounded. Poincar\'e recurrence implies that the set of recurrent points for any given measurable function is a full set.

\begin{proposition}\label{prop-resolving}
Let $(\piS,\sim)$ be an excellent semiconjugacy defined on a Markov shift $X$ described by some graph $\mathcal G$.
For $x\in X$ and $n\in\ZZ$, let
 $
   \cA(x,n):=\{y_n:y\in\cls{x}\}.
 $ 
Let $X^\reg$ be the set of points $x\in X^\#$ which are simultaneously recurrent for the three following functions:
 $$
  R(x):=\Size{\cA(x,0)},\quad 
  n_+(x):=\Size{\{y_{[0,\infty)}:y\in\cls{x}\}},\quad
  n_-(x):=\Size{\{y_{(-\infty,0]}:y\in\cls{x}\}}.
 $$
Then for any $x\in X^\reg$ and $n\in\ZZ$,
 \begin{enumerate}[  (a)]
  \item
  for each $n\in\ZZ$, the restriction $x\mapsto x_n$ defines a bijection between $\cls{x}$ and $\cA(x,n)$;
  \item
   for each $n\in\ZZ$, $|\cA(x,n)|=|\cA(x,n+1)|$ and, for every $a\in\cA(x,n)$ there is a unique $b\in\cA(x,n+1)$ such that $a\stackrel{\mathcal G}\to b$;
  \item $\cls{x}\subset X^\reg$.
\end{enumerate}

\end{proposition}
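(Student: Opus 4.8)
The plan rests on a single \emph{splicing} principle: if $x\in X^\#$, $y,z\in\cls{x}$, and $y_n\stackrel{\cG}\to z_{n+1}$ is an edge of $\cG$ for some $n\in\ZZ$, then the sequence $w$ equal to $y$ on $(-\infty,n]$ and to $z$ on $[n+1,\infty)$ again lies in $\cls{x}$. I would check this directly: $w$ is a bi-infinite $\cG$-path (away from the cut it follows $y$ or $z$, and the edge at the cut is assumed), it is Bowen equivalent to $x$ (every coordinate of $w$ is a coordinate of $y$ or of $z$), and it lies in $X^\#$ (to the left it agrees with $y\in X^\#$, to the right with $z\in X^\#$, so a symbol recurs infinitely often on each side). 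I will also use throughout that $\cls{x}=\pi^{-1}(\pi(x))$ is finite since $\pi$ is Bowen and finite-to-one, that $\cls{S^kx}=S^k\cls{x}$, and that $\cls{y}=\cls{x}$ whenever $y\BE x$.

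Before attacking (a) I would show that for $x\in X^\reg$ and every $n$ the maps $y\mapsto y_{[n,\infty)}$ and $y\mapsto y_{(-\infty,n]}$ are injective on $\cls{x}$. Writing $F_k:=\{y_{[k,\infty)}:y\in\cls{x}\}$, so that $n_+(S^kx)=|F_k|$, the truncation $F_k\to F_{k+1}$ is onto, hence $k\mapsto n_+(S^kx)$ is non-increasing; being also bounded by $|\cls{x}|<\infty$ and, since $x\in X^\reg$, recurrent, it is constant. Then the equivalence relations ``$y_{[k,\infty)}=z_{[k,\infty)}$'' on the finite set $\cls{x}$ have a constant number of classes while coarsening in $k$, so they all coincide; since that common relation, read for $k\to-\infty$, forces $y$ and $z$ to agree everywhere, it is the diagonal, i.e.\ $y\mapsto y_{[n,\infty)}$ is injective for every $n$. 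The past case is symmetric, with $k\mapsto n_-(S^kx)$ non-decreasing.

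Next I would deduce (a)--(c). For (a): $y\mapsto y_n$ is onto $\cA(x,n)$ by definition, and if $y\ne z$ in $\cls{x}$ had $y_n=z_n$, then by the previous paragraph $y$ and $z$ disagree at some $p>n$ and at some index $<n$; putting $q:=\max\{k<n:y_k\ne z_k\}$, so that $y$ and $z$ agree on $[q+1,n]$, splicing (with the roles of $y,z$ exchanged) along the edge $z_q\stackrel{\cG}\to z_{q+1}=y_{q+1}$ yields a point of $\cls{x}$ coinciding with $z$ on $(-\infty,n]$ but with $y$ on $[q+1,\infty)$, hence disagreeing with $z$ at $p$ --- contradicting injectivity of $w\mapsto w_{(-\infty,n]}$. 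For (b): $|\cA(x,n)|=|\cls{x}|=|\cA(x,n+1)|$ by (a); given $a\in\cA(x,n)$, the unique $y\in\cls{x}$ with $y_n=a$ supplies the successor $y_{n+1}\in\cA(x,n+1)$, and if $a\stackrel{\cG}\to b'$ with $b'\in\cA(x,n+1)$, then taking $z\in\cls{x}$ with $z_{n+1}=b'$ (possible since $b'\in\cA(x,n+1)$) and splicing $y$ on $(-\infty,n]$ with $z$ on $[n+1,\infty)$ along the assumed edge $a=y_n\to b'=z_{n+1}$ gives a point of $\cls{x}$ equal to $y$ at position $n$, hence equal to $y$ by (a), so $b'=y_{n+1}$: the successor is unique. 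Part (c) is immediate: if $y\in\cls{x}$ then $y\in X^\#$ and $\cls{y}=\cls{x}$, so $R$, $n_+$, $n_-$ take the same values along the orbits of $y$ and of $x$; recurrence of these functions for $x$ then gives it for $y$, i.e.\ $y\in X^\reg$.

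The main obstacle --- or at least the part demanding care --- is the bookkeeping inside each splice (which half of $w$ agrees with which of $y,z$, and the verification that $w$ remains in the regular part $X^\#$), together with the monotonicity-plus-recurrence step giving constancy of $n_\pm$: it is this step that is responsible for the statement holding on $X^\reg$ rather than on all of $X^\#$. Note that recurrence of $R$ is in fact not needed as a separate hypothesis, since (a) shows $R(S^nx)=|\cA(x,n)|=|\cls{x}|$ is automatically constant.
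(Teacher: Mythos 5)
Your proof is correct and follows essentially the same route as the paper's: constancy of $n_\pm$ along orbits (monotonicity plus recurrence) gives injectivity of the one-sided restrictions $y\mapsto y_{[n,\infty)}$ and $y\mapsto y_{(-\infty,n]}$ on $\cls{x}$, and splicing along edges of $\cG$ then reduces this to a single coordinate for (a) and yields the uniqueness of successors in (b); the paper phrases the first step via ``admissible'' finite windows rather than your nested equivalence relations, but the mechanism is identical. Your closing observation that recurrence of $R$ is never actually used is also consistent with the paper's argument.
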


This proposition is related to finite equivalence theory and especially some classical results of Coven and Paul \cite{Coven-Paul-1974} (see \cite[Thm 8.1.16]{LindMarcus-book}). 

\begin{proof}$ $

\step{1}{Given $x\in X^\#$ recurrent for the function $n_+$ and $a\in\ZZ$, there is an integer $\ell\geq1$ (which can be taken arbitrarily large) such that, among $y\in\cls{x}$, $y_{[a,a+\ell)}$ determines $y$.}

In the above situation, we say that  $[a,a+\ell)$ is an \emph{admissible interval} for $x$.

\medbreak

Let  $x,a$ be as above. Since $\cls{x}$ is finite, $n_+(S^{-n}x)=\Size{\cls{x}}$ for $n$ large enough. Observe that the function $n_+$ is monotone along orbits. By recurrence, it is constant along the orbits.  Hence $\Size{\cls{x}}=n_+(S^ax)=\Size{\{y_{[a,a+\ell)}:y\in\cls{x}\}}$ for all large integers~$\ell$. Fixing such an integer  $\ell$, the obvious surjectivity of the map $y\mapsto y_{[a,a+\ell)}$ implies its bijectivity.

\step{2}{Item (a): for any $n\in\ZZ$, $y_n$ determines $y$ for $y\in\cls{x}$.}

Given $n\in\ZZ$, Step 1 provides an admissible interval $[a,b]\subset[n+1,\infty)$. If there were distinct $y,y'\in\cls{x}$ such that $y_n=y'_n$ but $y_{(-\infty,n)}\ne y'_{(-\infty,n)}$, the spliced sequence $y'_{(-\infty,n)}y_{[n,\infty)}$ of $X^\#$ coinciding with $y$ on $[a,b]$, would contradict the admissibility of $[a,b]$. Thus $y_n$ determines $y_{(-\infty,n]}$ for $y\in\cls{x}$.

Symmetric arguments (using $n_-$) show that $y_n$ determines the whole sequence $y\in\cls{x}$, proving item (a).

\step{3}{Item (b): $\mathcal R:=\{(a,b)\in\cA(x,n)\times\cA(x,n+1):a\stackrel{\mathcal G}\to b\}$ is a bijection}

Observe first that for every $a\in\cA(x,n)$, $a=y_n$ for some $y\in\cls{x}$ so that $a\to y_{n+1}$ with $y_{n+1}\in\cA(x,n+1)$.
Assume  $a\stackrel{\mathcal G}\to b$ and $a\stackrel{\mathcal G}\to b'$ with $a\in\cA(x,n)$ and $b,b'\in\cA(x,n+1)$. Therefore there are $y,z,z'\in\cls{x}$ such that $y_n=a$, $z_{n+1}=b$, and $z'_{n+1}=b'$. Considering the splicings $y_{(-\infty,n]}z_{[n+1,\infty)}$ and  $y_{(-\infty,n]}z'_{[n+1,\infty)}$, Step 2 implies that $b=b'$. Thus $\mathcal R$ defines a unique map $\cA(x,n)\to\cA(x,n+1)$. 
A symmetric argument gives an inverse map $\cA(x,n+1)\to\cA(x,n)$, hence $\mathcal R$ is bijective: item (b) is proved.

\step{4}{Item (c): if $x\in X^\reg$, then  $\cls{x}\subset X^\reg$}

This is clear from the definition of $X^\reg$. The proposition is proved.
\end{proof}

\subsection{Bowen quotients}
We prove Theorem \ref{thm-Bowen-quotient}. To begin with, we let $\cG_N,\cA_N,X_N$, and $\stackrel N\sim$ as in Definition~\ref{def-Bowen-quotient}. Items (BQ1) and (BQ3) are then satisfied by construction. 

We now define $\pi_N$ to satisfy (BQ2) and item (2) of the theorem. We note the following easy consequence of the definition of $\cG_N$:

\begin{fact}\label{fact-lift-quotient}
For $-\infty\leq i<0<j\leq\infty$, let $\hat x=(\hat x_n)_{i<n<j}$ be a finite or infinite  path on $\cG_N$. For each $a\in\hat x_0$, there is a unique path $Q(\hat x,a):=(x^a_n)_{i<n<j}$ on $\cG$ such that $x^a_0=a$ and $x^a_n\in\hat x^a_n$ for all $i<n<j$. 
\end{fact}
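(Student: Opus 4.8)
The plan is to extract from each edge of the path $\hat x$ in $\cG_N$ the bijection of $\cA$-letters that it encodes, and then to transport the chosen letter $a$ forward and backward along these bijections, starting from index $0$.

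The first step I would carry out is to observe that the bijection witnessing an edge $A\stackrel{\cG_N}{\to}B$ is \emph{unique}: if $\phi,\phi':A\to B$ both satisfy $c\stackrel{\cG}{\to}d\iff d=\phi(c)$, resp. $d=\phi'(c)$, for all $(c,d)\in A\times B$, then for each $c\in A$ we have $c\stackrel{\cG}{\to}\phi(c)$, hence $\phi(c)=\phi'(c)$ by the defining equivalence for $\phi'$. Since $\hat x$ is a path on $\cG_N$, each $\hat x_n\stackrel{\cG_N}{\to}\hat x_{n+1}$ is an edge, so such a bijection exists, and by the above it is a well-defined map $\phi_n:\hat x_n\to\hat x_{n+1}$ characterized by $c\stackrel{\cG}{\to}d\iff d=\phi_n(c)$ for $(c,d)\in\hat x_n\times\hat x_{n+1}$.

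Next I would define $Q(\hat x,a)=(x^a_n)_{i<n<j}$ by setting $x^a_0:=a$, then $x^a_{n+1}:=\phi_n(x^a_n)$ for $0\leq n$ (whenever $n+1<j$) and $x^a_n:=\phi_n^{-1}(x^a_{n+1})$ for $n<0$ (whenever $i<n$); this propagates the letter $a$ in both directions from time $0$ and covers every index with $i<n<j$. By construction $x^a_n\in\hat x_n$ for all such $n$, and since $x^a_{n+1}=\phi_n(x^a_n)$ the defining equivalence for $\phi_n$ gives $x^a_n\stackrel{\cG}{\to}x^a_{n+1}$; hence $Q(\hat x,a)$ is a path on $\cG$ with $x^a_0=a$ lying entirely inside $\hat x$. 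For uniqueness I would take any path $(x'_n)_{i<n<j}$ on $\cG$ with $x'_0=a$ and $x'_n\in\hat x_n$ for all $n$: then $x'_n\in\hat x_n$, $x'_{n+1}\in\hat x_{n+1}$ and $x'_n\stackrel{\cG}{\to}x'_{n+1}$ force $x'_{n+1}=\phi_n(x'_n)$ by the characterization of $\phi_n$, and an immediate induction forward and backward from $n=0$ (where $x'_0=a=x^a_0$) yields $x'_n=x^a_n$ for all $n$.

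There is no genuine obstacle in this argument; the only point deserving care is the uniqueness of the edge bijection $\phi_n$ — without it the construction of $Q$ would not be canonical — together with the routine bookkeeping of the index ranges when $i=-\infty$ or $j=+\infty$.
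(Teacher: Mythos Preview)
Your proof is correct and complete. The paper itself does not give a detailed argument for this fact, stating only that it is ``an easy consequence of the definition of $\cG_N$''; your proposal supplies exactly the details one would expect, in particular the key observation that the bijection $\phi$ witnessing an edge $A\stackrel{\cG_N}{\to}B$ is uniquely determined by the characterization $a\stackrel{\cG}{\to}b\iff b=\phi(a)$.
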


For convenience we select some total order on the alphabet of $X$.
We define $q_N:X_N\to X$ by setting $q_N(\hat x):=Q(\hat x,\min(\hat x_0))$. Note that $q_N$ is $1$-Lipschitz since the $X$-word $x_{-n}\dots x_n$  in Fact~\ref{fact-lift-quotient} depends only on  $\hat x_{-n}\dots\hat x_n$. If $\hat x\in X_N^\#$, then $x_n\in\hat x_n$ for all $n\in\ZZ$ implies $x\in X^\#$.

In particular, the following defines a Borel map:
 $$
 \pi_N : X_N^\#\to Y,\quad \hat x\mapsto\pi\circ q_N(\hat x).
 $$ 
This map satisfies (BQ2) in Definition \ref{def-Bowen-quotient}: for any $\hat x\in X_N^\#$, any $x\in X^\#$ with $x_n\in\hat x_n$ for all $n\in\ZZ$, $\pi_N(\hat x)=\pi(x)$. 
Indeed, the condition $x_n\in\hat x_n$ implies that $x\BE q_N(\hat x)$ so $\pi_N(\hat x)=\pi(x)$, proving (BQ2) as well as item (2) in the theorem.

The map $\pi_N$ is a semiconjugacy. Indeed, $\pi_N(S_N(\hat x))=\pi(y)$ with  $y_n\in\hat x_{n+1}$ and $T(\pi_N(\hat x))=T(\pi(z))=\pi(S(z))$ with $z_{n}\in\hat x_{n}$. Hence $y\BE S(z)$ so $\pi(y)=\pi(S(z))$, and
 $
   \pi_N\circ S_N=T\circ\pi_N$.

 Note that (BQ2) ensures the uniqueness of such semiconjugacy. Indeed, if $\pi_N,\pi_N'$ are two such maps, then given any $\hat x\in X_N^\#$, $\pi_N(\hat x)=\pi(x)$ and $\pi_N'(\hat x)=\pi(y)$ where $x_n,y_n\in\hat x_n$ for each $n$. Hence $x\BE y$ and so $\pi(x)=\pi(y)$: $\pi_N=\pi_N'$. 
 
 \medbreak
 
 It remains to show   items (1), (3), and (4) and excellency, i.e., items (EX1)-(EX4).

 \medbreak

Let us check that this construction preserves the local compactness. Indeed,  for any $A\in\cA_N$, take $a\in A$ and observe that if $A\stackrel N\to B$, then $B\subset \{c:\exists b\; a\to b\text{ and }b\sim c\}$ which is finite, so $A$ has finite outdegree if $a$ has. A similar argument applies to the indegree. Thus $X_N$ is locally compact if $X$ is, proving item (1).

A similar argument shows that $\stackrel N\sim$ is locally finite. Observe also that $\stackrel N\sim$ is reflexive and symmetric. Thus item (EX1)  is proved.

To bound the number of preimages under $\pi_N$, let $y\in Y$ and write $\piS^{-1}(y)=\{x^1,\dots,x^r\}$ with $r=|\piS^{-1}(y)|$. By construction, any preimage under $\pi_N$ corresponds to a set of $N$ preimages under $\pi$: 
 \begin{equation}\label{eq-preimages}
    \pi_N^{-1}(y)\subset \{ \hat x^J:=(\{x^j_n:j\in J\})_{n\in\ZZ}:J\subset\{1,\dots,r\},\; |J|=N\}.
  \end{equation}

Therefore $|\piS_N^{-1}(y)|\leq\binom{r}{N}$. In particular,                                                                                                                                                                                                                                                                                                                                                                                                                                                                                                                                                                                                                                                                                                                                                                                                                                                                                                                                                                                                                                                                                                              $\piS_N$ and therefore $q_N|X_N^\#$ are finite-to-one. Note that (EX3) is established.
\medbreak

We let
 $$
   Y_0:=\piS(X^\#\setminus X^\reg) \subset Y
  $$
where $X^\reg$ is the good set defined by Proposition \ref{prop-resolving}. From Lemma~\ref{lem-null-sets}, $Y_0$ is a null set for $T$ since $X^\reg$ is a full set for $S$. For $y\in \pi(X^\#)\setminus Y_0$, Proposition~\ref{prop-resolving} implies that the sequences $\hat x^J$ in eq.~\eqref{eq-preimages} belong in $X_N$. In fact, they must belong to $X_N^\#$.  Indeed, since $x^1\in X^\#$, there is some $a\in\cA$ such that $x^1_n=a$ for infinitely many $n\geq0$. For those indices $n$, $\hat x^J_n$ is contained in the finite set $\{b:b\sim a\}$, hence must take some value infinitely many times. The same holds for negative indices, proving that $\hat x^J\in X_N^\#$. Thus  the inclusion in eq.~\eqref{eq-preimages} is an equality and:
 $$
    \forall y\in\pi(X^\#)\setminus Y_0\;\; |\piS_N^{-1}(y)| = \binom{r}{N}. 
 $$
Item (3) of the theorem is proved. We note for future reference the following consequence:
\begin{fact}\label{fact-special-lift}
Let $\pi:X^\#\to Y$ be an excellent semiconjugacy with Bowen quotient $\pi_N:X_N^\#\to Y$.
For all $x\in X^\#$ outside a null set, if $|\pi^{-1}(\pi(x))|\geq N$ then $\exists \hat x\in X_N^\#$ s.t. $\forall n\in\ZZ\; x_n\in\hat x_n$.
 \end{fact}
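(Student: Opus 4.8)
The plan is to read this off from the proof of item~(3) just completed. As the exceptional null set I would take $X^\#\setminus X^\reg$, with $X^\reg$ the good set of Proposition~\ref{prop-resolving}: it is a full set for $S$, and by Proposition~\ref{prop-resolving}(c) it is saturated under $\BE$, so it is exactly $X^\#\setminus\piS^{-1}(Y_0)$. Fix then $x\in X^\reg$ with $r:=|\piS^{-1}(\piS(x))|\geq N$. Since $\piS$ is Bowen, $\piS^{-1}(\piS(x))=\cls{x}$, so I can enumerate $\cls{x}=\{x^1,\dots,x^r\}$ with $x^1=x$ and choose some $J\subset\{1,\dots,r\}$ with $|J|=N$ and $1\in J$ (possible since $r\geq N\geq 1$).

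Next I would put $\hat x:=(\{x^j_n:j\in J\})_{n\in\ZZ}$ and repeat verbatim the verification already made for the sequences $\hat x^J$ of eq.~\eqref{eq-preimages}: by Proposition~\ref{prop-resolving}(a) the map $y\mapsto y_n$ is injective on $\cls{x}$ for each $n$, so $|\hat x_n|=N$; the members of $\hat x_n$ all lie in the single Bowen class $\cls{x}$, hence are pairwise $\sim$-related, so $\hat x_n\in\cA_N$; Proposition~\ref{prop-resolving}(b) promotes the edges $x^j_n\stackrel{\cG}\to x^j_{n+1}$, $j\in J$, to an edge $\hat x_n\stackrel{\cG_N}\to\hat x_{n+1}$; and the recurrence of $x^1=x\in X^\#$ confines $\hat x_n$ to a finite set $\{b:b\sim a\}$ for infinitely many $n\geq0$ and infinitely many $n\leq0$, which forces $\hat x\in X_N^\#$. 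Since $1\in J$, we get $x_n=x^1_n\in\hat x_n$ for every $n\in\ZZ$, so $\hat x$ is the desired lift.

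I do not expect any real obstacle: the content is entirely that of the preceding argument, the only addition being the freedom to pick the block $J$ so that it contains the index of $x$. The one place where $x\in X^\reg$ is genuinely needed (over merely $x\in X^\#$) is the use of Proposition~\ref{prop-resolving}, without which $\hat x_n$ could have fewer than $N$ elements or the successor assignment could fail to be a bijection; this is precisely why the exceptional null set cannot be dropped.
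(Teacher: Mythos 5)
Your proof is correct and is essentially the paper's own argument: the Fact is stated there as an immediate consequence of the proof of item~(3) of Theorem~\ref{thm-Bowen-quotient}, where it is shown that for $y\notin Y_0=\pi(X^\#\setminus X^\reg)$ every set $\hat x^J$ of $N$ preimage-coordinates lies in $X_N^\#$, and your only addition --- choosing $J$ to contain the index of $x$ --- is exactly the intended reading. Your identification of the exceptional null set with $X^\#\setminus X^\reg$ via the saturation in Proposition~\ref{prop-resolving}(c) also matches the paper.
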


\medbreak

We check that $\stackrel N\sim$ is a Bowel relation for $\pi_N$. First, let $\hat x,\hat y\in X_N^\#$ with $\pi_N(\hat x)=\pi_N(\hat y)$.  Let $a\in\hat x_0$ and $b\in\hat y_0$. Fact.~\ref{fact-lift-quotient} gives (unique) sequences $x\in[a]_X^\#$, $y\in[b]_X^\#$ with $\pi(x)=\pi_N(\hat x)$, $\pi_N(\hat y)=\pi(y)$. Thus $\pi(x)=\pi(y)$ and  $x_0\sim y_0$. It follows that $\hat x_0\stackrel{N}\sim\hat y_0$ and then $\hat x\stackrel{N}\BE \hat y$, by equivariance. 

Conversely, let $\hat x,\hat y\in X_N^\#$ with $\hat x\stackrel{N}\BE\hat y$. Picking $a\in\hat x_0$ and $b\in\hat y_0$, Fact~\ref{fact-lift-quotient} gives $x\in[a]_X^\#$, $y\in[b]_X^\#$ such that $x_n\in\hat x_n$ and $y_n\in\hat y_n$ for all $n\in\ZZ$. Thus $\pi(x)=\pi_N(\hat x)$ and $\pi_N(\hat y)=\pi(y)$. From the definition of $\stackrel{N}\sim$, we have $x\BE y$. The Bowen property for $\sim$ implies $\pi(x)=\pi(y)$ hence $\pi_N(\hat x)=\pi_N(\hat y)$. The Bowen property (EX2) is established.

\medbreak

Finally, we prove that $q_N$ is proper. Note that a subset $K$ of a symbolic system is relatively compact if and only if, for each $n\in\ZZ$, $\{x_n:x\in K\}$ is finite. Fix a relatively compact $K\subset X^\#$ and $n\in\ZZ$. By construction, $q_N(\hat x)\in K$ implies that $\hat x_n$, a set of $N$ symbols from $\cA$,   contains only symbols that are Bowen related to elements of $\{x_n:x\in K\}$. Since $\sim$ is locally finite, it follows that $\{\hat x_n:q_N(\hat x)\in K\}$ is finite and $q^{-1}(K)$ is relatively compact. Item (4) is proved. \hfill\qed

\section{Combinatorial degree}\label{s-magic}

We are going to characterize the subset of a Bowen semiconjugacy where the cardinality of the fibers is minimal by the recurrence of some words. To this end, we adapt the notions of degree and magic word from the classical theory of one-block codes between subshifts of finite type  (see Hedlund \cite{Hedlund-1969} and more generally \cite[chap. 9]{LindMarcus-book}). 

\medbreak

It is convenient to disregard the factor map $\pi:X\to Y$ and to  focus on the symbolic system $X$ and the Bowen relation. 

\begin{definition}
Given a symbolic system  $(S,Z)$ on some alphabet $\cA$, an \emph{(abstract) Bowen relation} is a reflexive, symmetric relation $\sim$ on $\cA$ such that the relation on $Z$ defined by $x\BE y\stackrel{\textrm{def}}{\iff}\forall n\in\ZZ\; x_n\sim y_n$   is an equivalence relation. 
\end{definition}

In this section, $\sim$ is a Bowen relation on the regular part $X^\#$ of a Markov shift $X$. Recall  that $X^\rec\subset X^\#$ is the set of word recurrent sequences in $X$ (i.e., any word that occurs once is seen i.o. --see p.~\pageref{def-sees-io}). 
Recall also that the Bowen equivalence class of any $x\in X^\#$ is denoted by: 
$$\cls{x}:=\{y\in X^\#:y\BE x\}.$$  

\subsection{Degree of Bowen relations}

The relation $\sim$ on the alphabet of $X^\#$ induces  another reflexive and symmetric relation on $\cL(X^\#)$ (also denoted by $\sim$) according to $v\sim w\stackrel{\rm def}\iff|v|=|w|$ and $v_1\sim w_1,\dots,v_{|v|-1}\sim w_{|v|-1}$.

We will consider the languages $\cL(X^\rec)\subset\cL(X^\#)\subset\cL(X)$. In general, they are distinct. However they are equal when $X$ is the disjoint union of its irreducible components.

\newcommand\degree{\delta}

\begin{definition}
Given a Bowen relation $\sim$, the \emph{degree} of  a word $w\in\cL(X)$  at some index $0\leq i<|w|$  is:
 $$\begin{aligned}
   &\degree_\sim(w,i):=\Size{\{v_i:v\in\cL(X^\#),\; v\sim w\}}\\
    &\degree_\sim(w):=\min\{\degree_\sim(w,i): 0\leq i<|w|\} .
 \end{aligned}$$
The degree of $\sim$ is:
 $$
    \degree_\rec(\sim):=\inf\{\degree_\sim(w):w\in\cL(X^\rec)\}.
 $$
A  \emph{magic word} is a word $w\in\cL(X^\rec)$ realizing this infimum. A couple $(w,i)$ that realizes it is called a magic couple. 
\end{definition}

Observe that for any $w\in\cL(X^\rec)$, $\deg_\sim(w)\geq1$ (since $\sim$ is reflexive). As soon as $\deg_\rec(\sim)$ is finite (e.g., if $\sim$ is locally finite),   there always exist magic words.

Given a word $W\in\cL(X^\#)$,  $X_W$ denotes the set of sequences that see i.o. $W$:
 $$
    X_W:=\{x\in X:\exists m_k,n_k\to\infty \text{ such that $W$ occurs in $x$ at $-m_k$ and at }n_k\}.
 $$
Note that $X_W$ is an invariant, possibly empty, subset of $X^\#$.
We start with two simple lemmas.

\begin{lemma}\label{lem-magic-injec}
Assume that  the  Bowen equivalence classes:
 $
    \cls{x}:=\{y\in X^\#:y\BE x\} $ are finite for all $x\in X^\#$.
Let  $W\in\cL(X^\#)$ with $\degree_\sim(W)=1$. If $x\in X^\#$ sees i.o. $W$, then $x$ is only equivalent to itself, that is:
 $$
    \forall x\in X_W \quad \cls{x}=\{x\}.
 $$
\end{lemma}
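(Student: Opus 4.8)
The plan is to show that if $x\in X_W$ and $y\in X^\#$ with $y\BE x$, then $y=x$. First I would fix a magic couple structure at the level of $W$: since $\deg_\sim(W)=1$, there is an index $0\le i<|W|$ with $\delta_\sim(W,i)=1$, meaning that any $v\in\cL(X^\#)$ with $v\sim W$ must satisfy $v_i=W_i$ (the only possible symbol at position $i$ among words $\sim$-related to $W$ is $W_i$ itself). The key point is then the following: whenever $W$ occurs in $x$ at some position $m$ (so $x_{[m,m+|W|)}=W$), and $y\BE x$, then $y_{[m,m+|W|)}\sim x_{[m,m+|W|)}=W$, so by the degree-$1$ property $y_{m+i}=W_i=x_{m+i}$. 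Thus $y$ and $x$ agree at every coordinate of the form $m+i$ where $W$ occurs at $m$ in $x$.

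Next I would use the recurrence hypothesis $x\in X_W$: there are $m_k\to+\infty$ and $n_k\to+\infty$ with $W$ occurring in $x$ at $-m_k$ and at $n_k$. By the previous paragraph, $y_{n_k+i}=x_{n_k+i}$ and $y_{-m_k+i}=x_{-m_k+i}$ for all $k$. So $y$ agrees with $x$ on an unbounded-above and unbounded-below set of integers. To upgrade this to agreement everywhere, I would invoke item (a) of Proposition~\ref{prop-resolving}: for $x\in X^\reg$ (a full set), the restriction map $z\mapsto z_n$ is a bijection from $\cls{x}$ onto $\cA(x,n)$ for every $n$, so a single matching coordinate forces $y=x$. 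However, to keep the statement clean for \emph{all} $x\in X_W$ and not just a full subset, it is better to argue directly: since $\cls{x}$ is finite by hypothesis, Step~1 of the proof of Proposition~\ref{prop-resolving} (applied with $n_+$ or more simply the finiteness of $\cls{x}$) shows that for any sufficiently long interval $[a,b]$ contained in $[N_0,\infty)$, the word $y_{[a,b]}$ determines $y$ among elements of $\cls{x}$; but actually the cleanest route is: pick one $n_k$ with $n_k+i$ large and one $-m_k$ with $-m_k+i$ very negative, and note these two coordinates pin down $y$ via item (a) of Proposition~\ref{prop-resolving} after first reducing to $x\in X^\reg$, OR simply observe that since $y$ and $x$ agree at $-m_k+i\to-\infty$ and at $n_k+i\to+\infty$, the spliced sequence argument of Step~2 of Proposition~\ref{prop-resolving} shows $y=x$ outright.

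The honest version of the argument that works for every $x\in X_W$ (not just a full set) is this. Given $y\BE x$ with $x\in X_W$, we know $y_j=x_j$ for $j$ ranging over an unbounded-above and unbounded-below subset $J\subset\ZZ$. Fix any $n\in\ZZ$. Choose $j_-,j_+\in J$ with $j_-<n<j_+$. Suppose $y_n\ne x_n$. Then the spliced sequence $z:=y_{(-\infty,n)}x_{[n,\infty)}$ — wait, this need not lie in $X$ unless $y_{n-1}\to x_n$ is an edge; instead splice along equal coordinates: since $y_{j_-}=x_{j_-}$ and $y_{j_+}=x_{j_+}$, the sequences $z:=x_{(-\infty,j_-]}y_{[j_-,j_+]}x_{[j_+,\infty)}$ and $x$ both lie in $X^\#$ and are Bowen-equivalent, and $z$ agrees with $x$ outside $(j_-,j_+)$ but equals $y$ on $(j_-,j_+)$; iterating this between consecutive elements of $J$ and using finiteness of $\cls{x}$, one reduces to a finite combinatorial problem on the interval $[j_-,j_+]$ which the degree-$1$ condition at the occurrences of $W$ resolves. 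I expect \textbf{the main obstacle} to be exactly this last bookkeeping step — making the "agreement on an unbounded set of coordinates plus finiteness of the class implies full agreement" argument rigorous without tacitly restricting to $X^\reg$. If one is willing to restrict to a full subset of $X_W$ (which suffices for all later applications, since $X^\reg$ is full and one only needs statements up to null sets), then the proof collapses to: agreement at a single coordinate plus Proposition~\ref{prop-resolving}(a) forces $y=x$, and the lemma follows immediately.
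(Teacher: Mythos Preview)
Your setup is exactly right: fix $I$ with $\delta_\sim(W,I)=1$, note that for every occurrence of $W$ at position $m$ in $x$ one has $y_{m+I}=W_I=x_{m+I}$, and hence $y$ agrees with $x$ on an unbounded (in both directions) set of coordinates. But you then get stuck on the upgrade to full agreement, and your two suggested exits both fall short. Invoking Proposition~\ref{prop-resolving}(a) would only prove the lemma on $X^\reg\cap X_W$, weakening the statement (the lemma is asserted for \emph{all} $x\in X_W$). Your splicing attempt produces only finitely many new elements of $\cls{x}$ and you leave the ``finite combinatorial problem'' unresolved.

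The missing idea is a direct infinite-class contradiction. Choose two occurrences $n_k<n_l$ of $W$ in $x$ such that $y_{[n_k+I,n_l+I)}\ne x_{[n_k+I,n_l+I)}$ (possible since $y\ne x$). Both words $u:=x_{[n_k+I,n_l+I)}$ and $v:=y_{[n_k+I,n_l+I)}$ begin with the symbol $W_I$ and are followed by $W_I$; since $X$ is a Markov shift, arbitrary bi-infinite concatenations of $u$ and $v$ lie in $X$, and in fact in $X^\#$ because the symbol $W_I$ recurs infinitely often. Any two such concatenations are Bowen-equivalent (coordinatewise $\sim$ holds since $x_n\sim y_n$), so they all lie in a single class $\cls{z}$. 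As $u\ne v$, there are uncountably many such concatenations, contradicting the finiteness hypothesis on Bowen classes. This is the paper's argument, and it works for every $x\in X_W$ without any appeal to Proposition~\ref{prop-resolving}.
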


\begin{proof}
Let $0\leq I<|W|$ such that $\delta_\sim(W,I)=1$ and $x\in X_W$. Pick an increasing sequence of integers $(n_k)_{k\in\ZZ}$ such that $x_{[n_k,n_k+|W|)}=W$. If there is a distinct $y\in X^\#$ with $x\BE y$, one can find $k<l$ such that $x_{[n_k+I,n_l+I]}\ne y_{[n_k+I,n_l+I]}$.  However, $y_{[n_k,n_k+|W|)}\sim W$ implies that $y_{n_k+I}=W_I$ since $\delta_\sim(W,I)=1$. Consider the infinitely many distinct arbitrary concatenations of the two words $x_{[n_k+I,n_l+I)},y_{[n_k+I,n_l+I)}$. They  belong to the Markov shift $X$ and in fact to $X^\#$ since they see i.o. the symbol $W_I$. Moreover, they belong to a single Bowen equivalence class which is infinite, a contradiction.
\end{proof}

\begin{lemma}\label{lem-deg-less-fiber}
Assume that $\sim$ is a locally finite Bowen relation. For any $x\in X^\#$,  
  \begin{equation}\label{eq-deg-less-fiber}
   |\cls{x}|\geq\degree_\sim(x):=\min\{\delta_\sim(x_p\dots x_{p+\ell-1}):p\in\ZZ,\; \ell\geq1\}.
  \end{equation}
\end{lemma}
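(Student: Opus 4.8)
The plan is to show that the equivalence class $\cls x$ is at least as large as the number of distinct symbols that can appear at any one of its coordinates, and then to read off the bound on the degree of an arbitrary subword. First I would invoke Proposition~\ref{prop-resolving}: passing to the full set $X^\reg$ on which $x$ is recurrent for the three functions $R$, $n_+$, $n_-$, item~(a) of that proposition gives, for \emph{every} $n\in\ZZ$, that the restriction map $y\mapsto y_n$ is a bijection between $\cls x$ and $\cA(x,n)$. Hence $|\cls x|=|\cA(x,n)|$ for all $n$, and in particular $|\cls x|\geq|\cA(x,n)|$ for every $n$. Since, for a window $[p,p+\ell)$, the set $\{v_i:v\in\cL(X^\#),\,v\sim x_p\dots x_{p+\ell-1}\}$ with $i=n-p$ is contained in $\cA(x,n)$ — any such $v$ extends (by local finiteness and the Markov structure, using an argument like the one producing $\hat x^J\in X_N^\#$ in the proof of Theorem~\ref{thm-Bowen-quotient}) to a full sequence $y\in\cls x$ with $y_n=v_i$ — we get $\delta_\sim(x_p\dots x_{p+\ell-1},i)\leq|\cA(x,n)|=|\cls x|$ for every $i$, hence $\delta_\sim(x_p\dots x_{p+\ell-1})\leq|\cls x|$. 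Taking the infimum over $p$ and $\ell$ yields $\degree_\sim(x)\leq|\cls x|$, which is exactly \eqref{eq-deg-less-fiber}.

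There is a subtlety: Proposition~\ref{prop-resolving}(a) is stated only for $x\in X^\reg$, a full set, whereas Lemma~\ref{lem-deg-less-fiber} claims the inequality for \emph{all} $x\in X^\#$. So the real content is to remove the genericity hypothesis. I would do this by a direct combinatorial argument not requiring recurrence. Fix $x\in X^\#$ with $\cls x$ finite (if $\cls x$ is infinite there is nothing to prove). Because $x\in X^\#$, there is a symbol $u$ occurring infinitely often among $(x_n)_{n\leq 0}$ and a symbol $v$ occurring infinitely often among $(x_n)_{n\geq 0}$; since $\sim$ is locally finite, for every $y\in\cls x$ the symbol $y_n$ lies in the finite set $\{b:b\sim x_n\}$, so along the indices where $x_n=u$ (resp.\ $=v$) each $y\in\cls x$ takes some value infinitely often. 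A pigeonhole over the finitely many pairs (value seen i.o.\ on the negative $u$-indices, value seen i.o.\ on the positive $v$-indices) together with the splicing trick of Lemma~\ref{lem-magic-injec} shows: if two elements of $\cls x$ agreed at all but finitely many coordinates they would agree everywhere, and more relevantly, one can build out of disagreements infinitely many distinct elements of a single class, forcing it to be infinite — contradiction. Hence the function $n\mapsto|\cA(x,n)|$ is well-defined, finite, and, by the same splicing argument, non-increasing both ways, hence constant, and the bijection $y\mapsto y_n$ from $\cls x$ to $\cA(x,n)$ holds for every $n$ and every $x\in X^\#$.

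With that in hand the rest is as above: for any window $[p,p+\ell)$ and any $i\in[0,\ell)$, every $v\in\cL(X^\#)$ with $v\sim x_{[p,p+\ell)}$ has $v_i\in\cA(x,p+i)$ — indeed $v$ is a subword of some $w\in\cL(X^\#)$ with $w\sim x$ on that window, and (extending $w$ to a bi-infinite $y\BE x$ inside $X^\#$ using local finiteness and the Markov property for the tails) we get $v_i=y_{p+i}$ with $y\in\cls x$. Therefore $\delta_\sim(x_{[p,p+\ell)},i)\leq|\cA(x,p+i)|=|\cls x|$, and minimizing over $i$, $p$, $\ell$ gives $\degree_\sim(x)\leq|\cls x|$, i.e.\ \eqref{eq-deg-less-fiber}.

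The main obstacle I anticipate is precisely the de-genericization: making the constancy of $|\cA(x,n)|$ and the bijectivity of $y\mapsto y_n$ work for an arbitrary $x\in X^\#$ rather than only on the full set $X^\reg$ of Proposition~\ref{prop-resolving}. The tool is the same splicing-plus-pigeonhole mechanism used in Lemma~\ref{lem-magic-injec} (any genuine disagreement between two class members, combined with a symbol of $\cA(x,n)$ seen infinitely often on both sides, generates infinitely many distinct class members, contradicting finiteness of $\cls x$), but one must be careful that all the spliced sequences remain in $X$ (automatic, as $X$ is a Markov shift) and in $X^\#$ (because they see i.o.\ a symbol from the finite set $\{b:b\sim x_n\}$). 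Everything else — the inclusion of degree-defining symbol sets into $\cA(x,n)$, and the passage from a finite window to a bi-infinite sequence in $\cls x$ — is the routine local-finiteness-plus-Markov extension argument already used in the proof of Theorem~\ref{thm-Bowen-quotient}.
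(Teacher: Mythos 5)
Your argument hinges on the inclusion $\{v_i:v\in\cL(X^\#),\,v\sim x_{[p,p+\ell)}\}\subset\cA(x,p+i)$ for \emph{every} window, i.e.\ on the claim that every word of $\cL(X^\#)$ related to a finite window of $x$ extends to a bi-infinite sequence in $\cls{x}$. This is false: such a $v$ is a subword of some $z\in X^\#$, but nothing forces any extension of $v$ to be $\sim$-related to $x$ \emph{outside} the window, and for $x\notin X^\rec$ the obstruction is real. The paper's own Example~\ref{exBadRegular} exhibits it: a heteroclinic $x\in X^\#\setminus X^\rec$ there has $|\cls{x}|=4$, yet at any index $n$ in its tails $\delta_\sim(x_n,0)=6$, so already the one-symbol window violates your inequality $\delta_\sim(x_{[p,p+\ell)},i)\leq|\cA(x,p+i)|\leq|\cls{x}|$. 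The same example refutes your de-genericization of Proposition~\ref{prop-resolving}(a): at the unique index $m$ with $x_m=(0,0,0)$, all four elements of $\cls{x}$ carry that same symbol, so $y\mapsto y_m$ is $4$-to-$1$, not a bijection; the splicing-plus-pigeonhole mechanism needs the agreement symbol to recur on both sides of $x$, which is exactly what fails off $X^\rec$. Since your closing step is ``the inequality holds for every window, hence for the minimum,'' and it fails for some windows, the proof does not go through as written (even though the conclusion, being a minimum, survives).

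The repair — and the paper's actual proof — is to run the extension argument only on a window where it works, and such a window suffices precisely because $\degree_\sim(x)$ is a minimum. Set $\cA_n:=\{v_0:v\in\cL(X^\#),\;v\sim x_{[-n,n]}\}$ (centre symbols of words related to the symmetric window). These sets are nested non-increasing and contained in the finite set $\{b:b\sim x_0\}$, hence stabilize at some $\cA_{n_0}$, and $\degree_\sim(x)\leq\delta_\sim(x_{[-n_0,n_0]})\leq|\cA_{n_0}|$. For each $a\in\cA_{n_0}$ and each $n\geq n_0$ one can pick a related word centred at $a$; local finiteness allows a diagonal extraction of an accumulation point $y\in\cA^\ZZ$ with $y_0=a$, which lies in $X$ (Markov property), in $X^\#$ (its symbols over the recurrent indices of $x$ range in a finite set), and in $\cls{x}$. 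Distinct $a$'s give distinct $y$'s, so $|\cls{x}|\geq|\cA_{n_0}|\geq\degree_\sim(x)$. Note this never uses injectivity of $y\mapsto y_n$ nor constancy of $|\cA(x,n)|$ — only the surjection onto the \emph{stabilized} symbol set.
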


\begin{proof}
Fix $x\in X^\#$. For each $n\geq1$, let $\cA_n:=\{y_0:y\in\cls{x_{[-n,n]}}\}$. This defines a non-increasing sequence of sets contained in $\{b\in\cA:b\sim x_0\}$ which is finite. Hence there is $n_0$ such that $\cA_n=\cA_{n_0}$  for $n\geq n_0$. Note that $|\cA_{n_0}|\geq \degree_\sim(x)$. Now, fix $a\in\cA_{n_0}$ and, for each $n\geq0$, pick $y^n\in\cls{x_{[-n,n]}}$ with $y^n_0=a$. For each $k\in\ZZ$, $\{y^n_k:n\geq|k|\}$ is finite (since the Bowen relation is locally finite). Thus one can find an accumulation point $y\in\cA^\ZZ$ (i.e., there is $n_j\uparrow\infty$ such that, for each $k\in\ZZ$, $y_k=y^{n_j}_k$ for all large $j$). It is easy to check that $y\in X^\#$ and $y\in\cls{x}$. Varying $a\in\cA_{n_0}$, eq.~\eqref{eq-deg-less-fiber} follows.
\end{proof}

\subsection{Magic semiconjugacies}

We relate the combinatorial degree of the Bowen relation with the cardinality of the fibers of the semiconjugacy.

\begin{theorem}\label{thm-degree}
Let $X$ be a Markov shift and let $\sim$ be a locally finite Bowen relation for $X^\#$. Let $x\in X^\rec$ with $\cls{x}$ finite. The following are equivalent:
 \begin{enumerate}[\quad(a)]
  \item $\cls{x}$ has exactly $\degree_\rec(\sim)$ elements;
  \item $x$ sees some magic word $W$ for $\sim$.
 \end{enumerate}
\end{theorem}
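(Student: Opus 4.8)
The plan is to prove the two implications separately, using the two lemmas just established together with the recurrence structure of $x$.

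\medbreak

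\textbf{From (b) to (a).} Suppose $x \in X^\rec$ sees a magic word $W$, realized as a magic couple $(W,I)$, so $\degree_\sim(W,I) = \degree_\rec(\sim) =: d$. First I would use Lemma~\ref{lem-deg-less-fiber} (in the form $|\cls{x}| \geq \degree_\sim(x)$) to get the lower bound: since every word occurring in $x$ lies in $\cL(X^\rec)$ — this is exactly word recurrence — we have $\degree_\sim(x_p \dots x_{p+\ell-1}) \geq \degree_\rec(\sim) = d$ for each subword, hence $\degree_\sim(x) \geq d$ and so $|\cls{x}| \geq d$. For the matching upper bound $|\cls{x}| \leq d$, I would pick an increasing bi-infinite sequence $(n_k)_{k\in\ZZ}$ with $x_{[n_k, n_k + |W|)} = W$ (possible since $W$ occurs i.o. on both sides because $x \in X^\rec$) and consider the map $y \mapsto y_{n_0 + I}$ on $\cls{x}$. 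By the Bowen property any $y \in \cls{x}$ has $y_{[n_k,n_k+|W|)} \sim W$ for all $k$, so $y_{n_0+I} \in \{v_I : v \in \cL(X^\#),\ v \sim W\}$, a set of cardinality $\degree_\sim(W,I) = d$. It remains to see this map is injective on $\cls{x}$: if $y, y' \in \cls{x}$ agree at position $n_0+I$, splice them there — the spliced sequences $y_{(-\infty, n_0+I]} y'_{[n_0+I,\infty)}$ and $y'_{(-\infty,n_0+I]} y_{[n_0+I,\infty)}$ lie in $X$ (Markov property across a matching letter), are Bowen-equivalent to $x$, and are in $X^\#$ (they see $W_I$ i.o.); the argument of Lemma~\ref{lem-magic-injec} — concatenating blocks between consecutive occurrences — shows that if $y \neq y'$ one manufactures infinitely many distinct members of the single finite class $\cls{x}$, a contradiction. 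Hence $|\cls{x}| \leq d$, giving equality.

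\medbreak

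\textbf{From (a) to (b).} Suppose $|\cls{x}| = d$. I want to produce a magic word seen i.o. by $x$. Using the argument inside Lemma~\ref{lem-deg-less-fiber}: for each $\ell \geq 1$ and $p \in \ZZ$ the sets $\cA_n^{(p,\ell)} := \{v_p : v \in \cls{x_{[-n,n]}}\}$ (suitably indexed) decrease and stabilize, and the stabilized cardinality is a lower bound for $|\cls{x}|$ that can only increase as the window grows; more to the point, I would show that $\degree_\sim(x) = \inf_{p,\ell} \degree_\sim(x_{[p,p+\ell)})$ is actually attained and equals $|\cls{x}| = d$. Indeed the lemma gives $|\cls{x}| \geq \degree_\sim(x) \geq d$ (the last inequality because all subwords of $x$ are in $\cL(X^\rec)$), so all are equal to $d$, and since the defining infimum for $\degree_\sim(x)$ is over a nonempty family of integers bounded below, it is attained by some subword $W = x_{[p, p+\ell)}$ at some index: $\degree_\sim(W, i) = d = \degree_\rec(\sim)$. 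Because $W$ occurs in $x \in X^\rec$, it occurs i.o.\ on both sides, so $W \in \cL(X^\rec)$ and $(W,i)$ realizes the infimum defining $\degree_\rec(\sim)$; thus $W$ is a magic word seen i.o.\ by $x$, proving (b).

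\medbreak

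\textbf{Main obstacle.} The delicate point is the injectivity half of (b)$\Rightarrow$(a): one must check that splicing two equivalence-class members at the magic index produces \emph{legal} sequences of $X$ lying in $X^\#$ (even in $X^\rec$), and then run the concatenation-of-blocks construction of Lemma~\ref{lem-magic-injec} carefully to contradict finiteness of $\cls{x}$ — paying attention that the spliced blocks genuinely differ so that the concatenations are pairwise distinct, and that they all remain Bowen-equivalent to $x$ (so live in the single class $\cls{x}$). The reverse direction (a)$\Rightarrow$(b) is essentially bookkeeping: the real content is that $\degree_\sim(x) = |\cls{x}|$ when the class has minimal size, which is squeezed out of Lemma~\ref{lem-deg-less-fiber} together with word recurrence, and then the attainment of the infimum over integers is automatic.
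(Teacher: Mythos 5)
The direction (a)$\Rightarrow$(b) and the lower bound $|\cls{x}|\geq\degree_\rec(\sim)$ in (b)$\Rightarrow$(a) are correct and match the paper (your (a)$\Rightarrow$(b) is the contrapositive of the paper's first paragraph, squeezing $\degree_\sim(x)=|\cls{x}|$ out of Lemma~\ref{lem-deg-less-fiber} and word recurrence). The genuine gap is exactly at the point you flag as ``delicate'': the injectivity of $y\mapsto y_{n_0+I}$ on $\cls{x}$. You propose to run ``the argument of Lemma~\ref{lem-magic-injec}'' after a single splice, but that argument is only available when $\degree_\sim(W,I)=1$: in that case \emph{every} element of $\cls{x}$ carries the \emph{same} symbol $W_I$ at \emph{every} occurrence position $n_k+I$, so the blocks $y_{[n_k+I,n_l+I)}$ and $y'_{[n_k+I,n_l+I)}$ all begin with $W_I$ and are each followed by $W_I$, and arbitrary concatenations of them are legal paths. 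When $d=\degree_\sim(W,I)>1$, knowing $y_{n_0+I}=y'_{n_0+I}$ gives you nothing at the other occurrences: $y_{n_1+I}$ and $y'_{n_1+I}$ may be distinct symbols $a_i\neq a_j$, so the block of $y$ ends with a letter that the graph sends to $a_i$ but not necessarily to $a_j$ or back to $a$, and the free concatenation is not legal. A single splice at $n_0+I$ produces only finitely many new members of $\cls{x}$ (at most $|B|\cdot|F|$ where $B,F$ are the left/right halves through the symbol $a$), which is no contradiction with finiteness.

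What is actually true, and what the paper proves, is that a disagreement between two class members that share the magic symbol at one occurrence makes the intervening filler word \emph{special} (at least two transitions in $\Trans_{i*}$), and then word recurrence of $x$ makes this special filler recur infinitely often; Lemma~\ref{prop-magic} (via the inductive counting of $\Trans_*(WU^nW)$ in Claims \ref{claim-concat-trans} and \ref{claim-extend-trans}) converts each recurrence into an extra element of $\cls{x}$, forcing $|\cls{x}|=\infty$ and contradicting the hypothesis. That counting lemma is the real content of the implication (b)$\Rightarrow$(a) and cannot be replaced by the degree-one concatenation trick; your proof needs it (or an equivalent) to close.
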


The following example shows that  the implication $(b)\implies(a)$ in Theorem~\ref{thm-degree} may fail when $\cls{x}$ is infinite or when $x\notin X^\rec$.

{\footnotesize
\begin{example}
Let $X=\{0,1,2\}^\ZZ$ and for $a,b\in\{0,1,2\}$, let $a\sim b\!\!\iff\!\! |b-a|=0,2$. Note that $X^\#=X$, $\cL(X^\rec)=\cL(X)$, and $\degree_\rec(\sim)=\degree_\sim(1)=1$. For $x\in X^\rec$ distinct from $1^\infty$ such as $x=(10)^\infty$, $\cls{x}$ is infinite. For $y=1^\infty0^k1^\infty$ with $k\geq1$, $y\in X^\#\setminus X^\rec$ sees i.o. the magic word $1$, however:  $\Size{\cls{y}}=2^k>\degree_\rec(\sim)$.
\end{example}
}

The degree has a geometric meaning:

\begin{corollary}\label{cor-Xmin}
Let $X$ be a Markov shift such that $X^\rec\ne\emptyset$ and let $\sim$ be a locally finite Bowen relation for $X^\#$. If $\cls{x}$ is finite for each $x\in X^\rec$,  then
 \begin{equation}\label{eq-deg-fiber}
    \degree_\rec(\sim) = \min\{\Size{\cls{x}}:x\in X^\rec\}
     =\min\{k\geq1:\{x\in X^\#:|\cls{x}|=k\}\text{ is not null}\}.
 \end{equation}
In particular, $\degree_\rec(\sim)$ only depends on the Bowen equivalence relation $\BE$. 
\end{corollary}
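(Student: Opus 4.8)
\textbf{Proof plan for Corollary~\ref{cor-Xmin}.}
The plan is to derive the two equalities in \eqref{eq-deg-fiber} by combining Theorem~\ref{thm-degree} with Lemma~\ref{lem-deg-less-fiber}, and then to read off the last sentence from the fact that the middle and right-hand expressions are visibly invariants of the equivalence relation $\BE$. First I would establish the inequality $\degree_\rec(\sim)\leq\min\{\Size{\cls{x}}:x\in X^\rec\}$: for any $x\in X^\rec$, the word-recurrence of $x$ together with Lemma~\ref{lem-deg-less-fiber} gives $\Size{\cls{x}}\geq\degree_\sim(x)$, and since every word occurring in $x$ lies in $\cL(X^\rec)$ (the orbit of $x$ witnesses its recurrence), $\degree_\sim(x)\geq\degree_\rec(\sim)$ by definition of the degree of $\sim$; hence $\Size{\cls{x}}\geq\degree_\rec(\sim)$ for every $x\in X^\rec$. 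For the reverse inequality I would take a magic couple $(W,I)$, so $\degree_\sim(W,I)=\degree_\rec(\sim)$ with $W\in\cL(X^\rec)$; pick any $z\in X^\rec$ in which $W$ occurs (such $z$ exists by definition of $\cL(X^\rec)$), and note that, being word-recurrent, $z$ sees $W$ infinitely often, i.e.\ $z\in X_W\cap X^\rec$. Then the implication $(b)\implies(a)$ of Theorem~\ref{thm-degree}, applicable since $\cls{z}$ is finite by hypothesis, yields $\Size{\cls{z}}=\degree_\rec(\sim)$, so the minimum on the right is attained and equals $\degree_\rec(\sim)$. This proves the first equality.

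For the second equality, set $m:=\min\{k\geq1:\{x\in X^\#:\Size{\cls{x}}=k\}\text{ is not null}\}$. The inequality $m\leq\degree_\rec(\sim)$ follows because $X^\rec$ is a full subset of $X$ (it carries all invariant measures, as noted on p.~\pageref{def-sees-io}) and, by the first equality, the set of $x\in X^\rec$ with $\Size{\cls{x}}=\degree_\rec(\sim)$ — which is nonempty — actually contains the point $z$ above together with its whole orbit; but I need a \emph{non-null} such set, so instead I would argue that $\{x\in X^\#:\Size{\cls{x}}=\degree_\rec(\sim)\}$ is non-null: it contains $X_W\cap X^\rec$ for $W$ a magic word, and $X_W$ has positive measure for the measure supported on $\overline{\mathcal O(z)}$ (any ergodic measure giving positive mass to the cylinder $[W]$ charges $X_W\cap X^\rec$ by Poincar\'e recurrence and Birkhoff), so it is non-null. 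Conversely $m\geq\degree_\rec(\sim)$: if $\{x\in X^\#:\Size{\cls{x}}=k\}$ is non-null, it meets the full set $X^\rec$, giving some $x\in X^\rec$ with $\Size{\cls{x}}=k$, whence $k\geq\degree_\rec(\sim)$ by the first equality. Thus $m=\degree_\rec(\sim)$.

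Finally, both the middle expression $\min\{\Size{\cls{x}}:x\in X^\rec\}$ and the right-hand expression $m$ refer only to the cardinalities of the classes $\cls{x}$, hence only to the equivalence relation $\BE$ on $X^\#$ (the set $X^\rec$ and the notion of null set are intrinsic to $X$), so $\degree_\rec(\sim)$, being equal to both, depends only on $\BE$; this proves the last sentence. The main obstacle I anticipate is the non-nullity argument in the second paragraph: one must exhibit a genuinely non-null (not merely nonempty) set of points realizing the minimal class size, which requires producing an invariant measure charging the cylinder of a magic word and invoking recurrence to land inside $X_W\cap X^\rec$ almost surely; everything else is a direct bookkeeping of the definitions against Theorem~\ref{thm-degree} and Lemma~\ref{lem-deg-less-fiber}.
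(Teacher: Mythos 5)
Your overall route is the same as the paper's: both equalities are derived from Lemma~\ref{lem-deg-less-fiber} and Theorem~\ref{thm-degree}, and your treatment of the first equality and of the inequality $m\geq\degree_\rec(\sim)$ is correct and matches the paper's proof. The one step that does not go through as written is the non-nullity argument for $m\leq\degree_\rec(\sim)$. You propose to charge $X_W\cap X^\rec$ using ``the measure supported on $\overline{\mathcal O(z)}$'' for a word-recurrent $z$ seeing the magic word $W$. For a countable-state Markov shift such a measure need not exist: the orbit closure of a word-recurrent point need not be compact and need not carry any invariant probability measure at all (the empirical measures along the orbit can escape to infinity), and even when some invariant measure on $\overline{\mathcal O(z)}$ exists it need not give positive mass to the cylinder of $W$. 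So the existence of an ergodic measure with $\mu([W]_X)>0$ --- which you correctly single out as the crux --- is not actually established by your sketch.

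The fix is short and is exactly what the paper does: since $W\in\cL(X^\rec)$, the word $W$ occurs at two positions $n_1<n_2$ in some $z\in X^\rec$, so the Markov property lets you close up the segment $z_{[n_1,n_2)}$ into a periodic point $y$ that sees $W$ i.o. The orbit of $y$ carries the uniform atomic invariant measure, hence is non-null; $y$ is word recurrent, so $\cls{y}$ is finite by hypothesis, and Theorem~\ref{thm-degree} gives $\Size{\cls{S^ky}}=\Size{\cls{y}}=\degree_\rec(\sim)$ for every $k$. Thus $\{x\in X^\#:\Size{\cls{x}}=\degree_\rec(\sim)\}$ contains a non-null set and $m\leq\degree_\rec(\sim)$. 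With this replacement your argument is complete; the final sentence about dependence only on $\BE$ is correctly read off from the two right-hand expressions.
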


\begin{proof}
The inequality $\degree_\rec(\sim) \leq \min\{\Size{\cls{x}}:x\in X^\rec\}$ follows from Lemma~\ref{lem-deg-less-fiber} since $\degree_\rec(\sim)\leq\degree_\sim(x)$ for $x\in X^\rec$. Conversely, let $W$ be a magic word for $\sim$ over $X^\rec$. By definition, there is $x^0\in X^\rec$ that sees i.o. $W$. By Theorem~\ref{thm-degree}, $|\cls{x^0}|=\degree_\rec(\sim)$, proving the first equality.

We show that $\degree_\rec(\sim)$ is equal to $d:=\min\{k\geq1:\{x\in X^\#:|\cls{x}|=k\}\text{ is not null}\}$.  Since $\{x\in X^\#:|\cls{x}|=d\}$ has positive measure for some invariant probability measure, it contains a recurrent point, so $\degree_\rec(\sim)=\min\{|\cls{x}|:x\in X^\rec\}\leq d$. 

Conversely, there is $x\in X^\rec$ such that $|\cls{x}|=\degree_\rec(\sim)$. By Theorem~\ref{thm-degree}, $x$ sees i.o. some magic word $W$.  Since $X$ is a Markov shift, one can find $y\in X^\#$ that sees i.o. $W$ and is periodic. Its orbit is a non-null set, hence $d\leq\degree_\rec(\sim)$.
\end{proof}

\begin{remark}
The next example shows that there is no simple analogue of Corollary~\ref{cor-Xmin} for $X^\#$,  even if one replaces the degree $\deg_\rec(\sim)$ by $\min\{\deg_\sim(w):w\in \cL(X^\#)\}$.
\end{remark}

\begin{figure}
\includegraphics[width=0.6\textwidth]{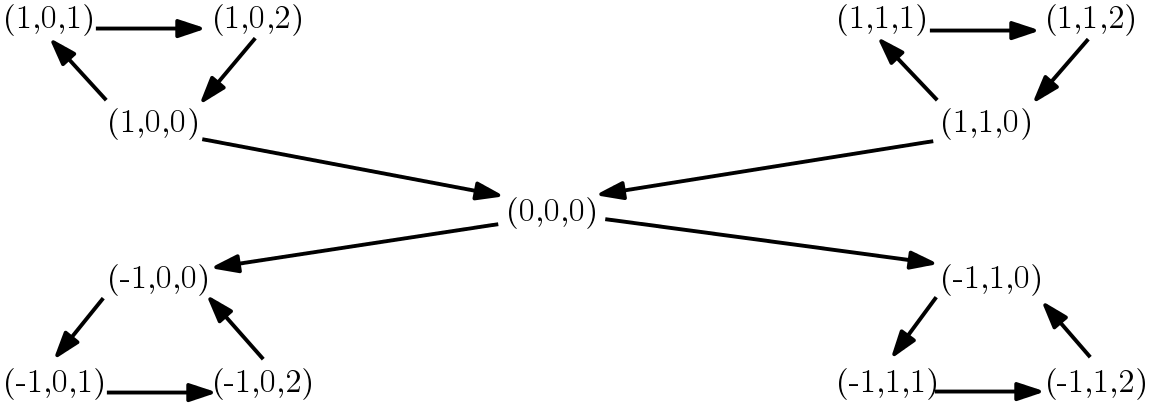}
\caption{The subshift of finite type in Example \ref{exBadRegular}}\label{fig-Sigma}
\end{figure}

{\footnotesize
\begin{example}\label{exBadRegular}
Let $X$ be the  subshift of finite type defined by the directed graph in Fig.~\ref{fig-Sigma}.
Define $\pi:X\to\{-1,0,+1\}^\ZZ$ as the projection on the first coordinate with Bowen relation $(a,b,c)\sim(a',b',c')\iff a=a'$. Note that $X^\#=X$ and $\pi(X)$ is the union of two fixed points $(+1)^\infty$, $ (-1)^\infty$ and a heteroclinic orbit: $\{\sigma^k((+1)^\infty0\cdot(-1)^\infty):k\in\ZZ\}$. Note also  that $X^\rec$ is the union of:
 \begin{itemize}
  \item[-] four $3$-periodic orbits mapped to the two fixed points, defining Bowen equivalence classes with $6$ elements each;
  \item[-] four heteroclinic orbits, each  mapped to the heteroclinic orbit, defining Bowen equivalence classes with $4$ elements each.
\end{itemize} 
The following is easily checked:
 $$\begin{aligned}
     \inf_{w\in\cL(X^\#)} \delta_{\sim}(w) = \delta_\sim((0,0,0),0) = 1 &< \inf\{|\cls{x}|:x\in X^\#\} = 4 \\
     &< \inf\{|\cls{x}|:x\in X^\rec\} = 6\\
     &\;= \inf_{w\in\cL(X^\rec),0\leq i<|w|} |\{v^i:v\in\cL(X^\rec),v\sim w\}\\
     &\;= \inf_{w\in\cL(X^\rec),0\leq i<|w|} \degree_\sim(w) =\delta_\sim((1,0,0),0).
  \end{aligned}$$
\end{example}
}

To prepare for the proof of Theorem~\ref{thm-degree}, we fix a magic couple $(W,I)$ in $X^\#$ over $X^\rec$. Since $\sim$ is locally finite, $M=|\cls{W}|$ is finite. We enumerate its elements and the symbols at index $I$:
 $$
   \cls{W}=\{W^1,\dots,W^M\} \text{ and }\{a_1,\dots,a_d\}=\{W^1_I,\dots,W^M_I\}
 $$
where $d=\degree_\rec(\sim)$. Obviously, $d\leq M$. We can assume: $a_i=W^i_I$ for $i=1,\dots,d$.

\newcommand\Trans{\mathcal T}
To any word that can be written as a concatenation $WuW$, we associate:
 $$
    \Trans_{ij}(WuW):=\{a_i\bar\tau: va_i\bar\tau a_j w\in\cL(X^\#),\; va_i\bar\tau a_j w\sim WuW\text{ for some }|v|=I,\; |w|=\Size{W}-I-1\}
 $$
for $1\leq i,j\leq d$.
We call the words $a_i\bar\tau\in \Trans_{ij}(WuW)$ \emph{transitions}. Note that these words have the same length as $Wu$.
Matching transitions can be concatenated:

\begin{claim}\label{claim-concat-trans}
For any $1\leq i,j,k\leq d$, any $WuWu'W\in\cL(X^\rec)$, there is an injection:
 $$
   \Trans_{ij}(WuW)\times\Trans_{jk}(Wu'W)\to\Trans_{ik}(WuWu'W),\quad 
   (\tau,\tau')\longmapsto \tau\tau'.
  $$
\end{claim}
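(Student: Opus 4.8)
The plan is to realize the concatenation map by a direct splicing of defining data, using the magic property of $W$ to control the behaviour at the index $I$ of the middle copy of $W$, exactly as the proof of Lemma~\ref{lem-magic-injec} used the uniqueness $y_{n_k+I}=W_I$ forced by $\degree_\sim(W,I)=1$ — except here the degree is $d$ rather than $1$, so we only get that the symbol at that position lies in $\{a_1,\dots,a_d\}$.

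First I would unwind the definitions. A transition $\tau=a_i\bar\tau\in\Trans_{ij}(WuW)$ comes with a witness word $va_i\bar\tau a_jw\in\cL(X^\#)$ with $|v|=I$, $|w|=|W|-I-1$, and $va_i\bar\tau a_jw\sim WuW$; note $|va_i\bar\tau a_jw|=|WuW|$ and the word decomposes as (a word $\sim W$)(the interior)(a word $\sim W$), with the interior $\bar\tau a_j$-part sitting over $u$. Similarly $\tau'=a_j\bar\tau'\in\Trans_{jk}(Wu'W)$ has a witness $v'a_j\bar\tau'a_kw'\sim Wu'W$. The two witnesses overlap on the middle copy of $W$: the last $|W|$ symbols $a_jw$ of the first witness and the first $|W|$ symbols $v'a_j$ of the second are both $\sim W$, and they agree at the distinguished position (it is $a_j$ in both). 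Because $W$ is a Markov word and the graph is simple, I would splice the first witness and the second witness along this common copy: form $va_i\bar\tau a_jw$ followed by the tail of $v'a_j\bar\tau'a_kw'$ beyond its initial $|W|$ symbols. The splice is a legal path in $X$ because $w$ (resp.\ $v'$) is determined as a $\cG$-path continuation once we fix $a_j$ (here I would invoke, as in Step~2--3 of Proposition~\ref{prop-resolving}, that inside a Bowen class the symbol at one index determines the sequence, or more simply argue via the Markov property that the two copies of $W$ can be glued). The spliced word has the form (word $\sim W$)(interior over $uWu'$)(word $\sim W$) and is $\sim WuWu'W$, with its symbol at position $I$ equal to $a_i$ and at the corresponding later position equal to $a_k$; hence it witnesses a transition $\tau\tau':=a_i\bar\tau\,(a_j\bar\tau')\in\Trans_{ik}(WuWu'W)$ of the right length $|WuWu'W|-|W|=|Wu|+|Wu'|$.

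Then I would check injectivity. The key point is that $\tau\tau'$ literally contains $\tau=a_i\bar\tau$ as its initial segment of length $|Wu|$, and contains $\tau'=a_j\bar\tau'$ as the segment of length $|Wu'|$ starting at the unique occurrence of (a position $\sim$) the middle $W$ — more concretely, $\tau\tau'$ restricted to its first $|Wu|$ symbols recovers $\tau$, and the symbol at position $|Wu|+I$ of $\tau\tau'$ is $W^j_I=a_j$, which pins down $j$; once $i,j,k$ are pinned (they are fixed in the statement anyway) the remaining coordinates of $\tau\tau'$ split uniquely into the coordinates of $\tau$ and of $\tau'$. So the map $(\tau,\tau')\mapsto\tau\tau'$ is injective on $\Trans_{ij}(WuW)\times\Trans_{jk}(Wu'W)$.

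The main obstacle I anticipate is the legality of the splice: one must be sure the witness of $\tau'$, which a priori involves some prefix $v'\sim W$ of its own, can be taken so that its copy of $W$ matches the copy $a_jw$ occurring in the witness of $\tau$ (they need to be the \emph{same} $\cG$-path, not merely both $\sim W$ and agreeing at index $I$). This is where the hypothesis $WuWu'W\in\cL(X^\rec)$ and the finiteness of Bowen classes enter, via Proposition~\ref{prop-resolving}: along an admissible interval the symbol at one coordinate determines the whole equivalence-class member, so agreement at position $I$ of the middle $W$ (both equal $a_j$) forces agreement on the whole middle $W$-block, after which the Markov property lets us concatenate the paths. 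Everything else — lengths, the relation $\sim$ being preserved under the splice, membership in $\cL(X^\#)$ (the spliced word extends to a sequence seeing $W$, hence $a_{j_0}:=W_{I}$ or rather some symbol, infinitely often, as in the last paragraph of the proof of Theorem~\ref{thm-Bowen-quotient}) — is routine bookkeeping.
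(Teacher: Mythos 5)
Your construction is, at bottom, the paper's: concatenate the two witnesses and read off the transition $a_i\bar\tau a_j\bar\tau'$, whose first $|Wu|$ symbols recover $\tau$ and whose remaining $|Wu'|$ symbols recover $\tau'$ (so injectivity is indeed immediate). But the step you single out as ``the main obstacle'' --- forcing the copy of $W$ at the end of the first witness (the block $a_jw$) and the copy at the start of the second witness (the block $v'a_j$) to be the \emph{same} $\cG$-path --- is both unnecessary and not established by your argument. It is unnecessary because the correct splice discards $w$ and $v'$ altogether: the word to exhibit is $va_i\bar\tau a_j\bar\tau'a_kw'$, obtained by gluing the path $va_i\bar\tau a_j$ to the path $a_j\bar\tau'a_kw'$ at the single shared symbol $a_j$. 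Since $X$ is described by a one-step graph, agreement at that one junction symbol is all that legality of the concatenation requires (and the word lies in $\cL(X^\#)$ by splicing the ambient sequences of the two witnesses at that same coordinate). The resulting word has length $|WuWu'W|$, is related to $WuWu'W$ coordinate by coordinate, and witnesses $a_i\bar\tau a_j\bar\tau'\in\Trans_{ik}(WuWu'W)$; its middle $W$-block is a hybrid of the two original witnesses, and nothing forces it to coincide with either of them.

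The step is not established because Proposition~\ref{prop-resolving} cannot be invoked here: it concerns Bowen classes $\cls{x}$ of points $x\in X^\reg$ under an excellent, finite-to-one semiconjugacy, whereas Section~\ref{s-magic} works with an abstract Bowen relation and finite words, with no semiconjugacy in sight. Moreover, even where that proposition applies, it does not say that two $W$-related blocks agreeing at index $I$ must coincide: the class $\cls{W}$ has $M\geq d$ elements and $M>d$ is possible, so the symbol at index $I$ does not determine the block. As written, the paragraph resolving your ``obstacle'' is the one step that does not go through; once you perform the splice at the single symbol $a_j$ rather than along the whole middle copy of $W$, the obstacle disappears and your proof becomes the paper's.
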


\begin{proof}
Let $\tau,\tau'$ be as above. We can write $\tau=a_i\bar\tau$, $\tau'=a_j\bar\tau'$.
By definition, $va_i\bar\tau a_jw\sim WuW$ and $v'a_j\bar\tau'a_kw'\sim Wu'W$ where $v,w,v',w'$ are words of lengths  $|v|=|v'|=I$ and $|w'|=|w|=|W|-I-1$. Thus, the concatenation $va_i\bar\tau a_j\bar\tau'a_kw'$ belongs to $\cL(X^\#)$  and is related to $WuWu'W$. Hence $a_i\bar\tau a_j\bar\tau'\in\Trans_{ik}(WuWu'W)$. The injectivity is obvious.
\end{proof}

Any transition can be extended to the right and to the left:

\begin{claim}\label{claim-extend-trans}
For any $1\leq i\leq d$, any $WuW\in\cL(X^\rec)$, the following two sets are not empty:
 $$
    \Trans_{i*}(WuW):=\displaystyle\bigcup_{1\leq j\leq d} \Trans_{ij}(WuW)
     \text{ and }     \Trans_{*i}(WuW):=\displaystyle\bigcup_{1\leq j\leq d} \Trans_{ji}(WuW).
 $$
Moreover, $\Trans_*(WuW):=\bigcup_{1\leq i\leq d} \Trans_{i*}(WuW)=\bigcup_{1\leq j\leq d} \Trans_{*j}(WuW)$ has at least $d$ elements.
\end{claim}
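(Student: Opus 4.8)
The claim asks for three things about the sets of one-sided extensions of transitions at a fixed occurrence of the magic word $W$: (i) every $\Trans_{i*}(WuW)$ is nonempty; (ii) every $\Trans_{*i}(WuW)$ is nonempty; (iii) $\Trans_*(WuW)$ has at least $d$ elements. The natural strategy is to produce transitions directly from the definition of the degree $d=\degree_\rec(\sim)$ realized at the magic couple $(W,I)$, together with the observation that $W$ is word-recurrent so $WuW$ genuinely occurs inside some $x\in X^\rec$.

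\smallbreak

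\textbf{Step 1 (Nonemptiness of $\Trans_{i*}$ and $\Trans_{*i}$).} Fix $i\in\{1,\dots,d\}$. Since $(W,I)$ is a magic couple over $X^\rec$ and $WuW\in\cL(X^\rec)$, I would first argue that $WuW$ occurs i.o.\ in some word-recurrent $x\in X^\rec$, and consider its Bowen equivalence class $\cls{x}$, which is finite. The key point is that the degree is \emph{minimal} at $W$: the set of symbols seen at index $I$ over $W$ inside equivalent sequences has exactly $d$ elements, namely $\{a_1,\dots,a_d\}=\{W^1_I,\dots,W^M_I\}$, and this minimality is preserved when we pass to any word $WuW$ containing $W$ (one cannot have fewer than $d$ symbols at that index since $W$ already forces $d$; one cannot have more by the magic property — this is exactly where I would invoke that $(W,I)$ realizes $\degree_\rec(\sim)$). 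So the $d$ symbols $a_1,\dots,a_d$ each appear at the first copy of $W$ in some sequence $y\in\cls{x}$; fixing one such $y$ with $y$-index $I$ of the first $W$-copy equal to $a_i$, the block of $y$ running from $a_i$ up to (but not including) the symbol at index $I$ of the \emph{second} copy of $W$ is, by definition, an element of $\Trans_{ij}(WuW)$ for whichever $j$ has $y$ equal to $a_j$ there. Hence $\Trans_{i*}(WuW)\ne\emptyset$. The argument for $\Trans_{*i}(WuW)$ is symmetric: I fix instead the symbol at the \emph{second} copy of $W$ to be $a_i$, which is again achievable by the minimality/surjectivity of the degree at $(W,I)$ for the right-hand copy, and read off the block backwards.

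\smallbreak

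\textbf{Step 2 (Cardinality $\geq d$).} For the last assertion, note that a transition $a_i\bar\tau\in\Trans_{ij}(WuW)$ is a word whose first symbol is $a_i$; since $a_1,\dots,a_d$ are pairwise distinct, the sets $\Trans_{i*}(WuW)$ for $i=1,\dots,d$ are pairwise disjoint, and each is nonempty by Step 1. Therefore $\Trans_*(WuW)=\bigsqcup_{i=1}^d \Trans_{i*}(WuW)$ has at least $d$ elements. The equality $\bigcup_i \Trans_{i*}(WuW)=\bigcup_j \Trans_{*j}(WuW)$ is immediate from unwinding both double unions, since both equal $\bigcup_{i,j}\Trans_{ij}(WuW)$.

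\smallbreak

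\textbf{Main obstacle.} The routine-looking part — reading blocks off equivalent sequences — is fine; the actual subtlety is Step 1's claim that the degree at index $I$ over the \emph{longer} word $WuW$ is still exactly $d$ at the relevant copies of $W$. The lower bound $\geq d$ is immediate (the word $W$ sits inside $WuW$), but the fact that no \emph{new} symbols can appear at those positions — i.e.\ that the $d$ symbols already exhaust the possibilities when we also demand the second copy of $W$ be matched — requires that $(W,I)$ is a magic couple realizing the infimum $\degree_\rec(\sim)$, so that $\degree_\sim(WuW,i)\geq d$ for \emph{all} $i$ and in particular cannot drop, combined with the fact that inside $X^\rec$ the word $WuW$ is itself recurrent so this degree is attained. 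Making this bookkeeping precise — that ``matching both copies of $W$'' does not change the count at the copies' index-$I$ positions — is the one place where care is needed.
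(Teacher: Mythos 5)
Your overall strategy is the paper's: compare the set of index-$I$ symbols over $WuW$ with the set $\{a_1,\dots,a_d\}$ of index-$I$ symbols over $W$ and show they coincide, which yields both the nonemptiness of each $\Trans_{i*}(WuW)$ (and symmetrically each $\Trans_{*i}(WuW)$) and the bound $\Size{\Trans_*(WuW)}\geq d$; your Step 2 is fine. But in Step 1 the two inequalities are each justified by the wrong fact, so the argument as written does not go through. The \emph{upper} bound $\{m_I:m\in\cL(X^\#),\ m\sim WuW\}\subseteq\{a_1,\dots,a_d\}$ has nothing to do with the magic property: it holds simply because the length-$\Size{W}$ prefix of any $m\sim WuW$ is $\sim W$. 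The \emph{lower} bound $\Size{\{m_I:m\sim WuW\}}\geq d$ is emphatically not ``immediate because $W$ sits inside $WuW$'' --- adding the constraint that the second copy of $W$ also be matched can only shrink the set of admissible symbols at index $I$. It is exactly here that one must use $WuW\in\cL(X^\rec)$ together with the fact that $(W,I)$ realizes the infimum $\degree_\rec(\sim)$ over $\cL(X^\rec)$, giving $\degree_\sim(WuW,I)\geq\degree_\sim(WuW)\geq d$. You do write ``$\degree_\sim(WuW,i)\geq d$ for all $i$'' in your closing paragraph, so the right ingredient is on the page, but your stated reasons for the two containments are swapped throughout (``$W$ already forces $d$'' for the lower bound, ``the magic property'' for the upper bound), and each of those justifications, taken at face value, is a non sequitur.

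A second, more structural problem is the detour through a specific $x\in X^\rec$ and its Bowen class $\cls{x}$. The quantities $\degree_\sim(WuW,I)$ and the sets $\Trans_{ij}(WuW)$ are defined at the level of words in $\cL(X^\#)$, and the word-level equality $\{m_I:m\sim WuW\}=\{a_1,\dots,a_d\}$ is all you need: any $m\in\cL(X^\#)$ with $m\sim WuW$ and $m_I=a_i$ produces an element of $\Trans_{ij}(WuW)$ for the $j$ with $m_{|Wu|+I}=a_j$, and $m_{|Wu|+I}$ automatically lies in $\{a_1,\dots,a_d\}$ because the suffix of $m$ is $\sim W$. Your assertion that ``the $d$ symbols $a_1,\dots,a_d$ each appear at the first copy of $W$ in some sequence $y\in\cls{x}$'' is a strictly stronger statement about bi-infinite sequences; it is precisely the last assertion of Lemma~\ref{prop-magic}, whose proof relies on the present claim, so invoking it here would be circular. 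Staying at the level of $\cL(X^\#)$, with the two inequalities correctly attributed, closes the argument.
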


\begin{proof}
Obviously,
 $
    \{m_I:m\in\cL(X^\#),\, m\sim WuW\}\subset\{w_I:w\in\cL(X^\#),\, w\sim W\}=\{a_1,\dots,a_d\}.
 $
Since $(W,I)$ is magic, the cardinalities are equal and finite so the inclusion is an equality. Since $\{m_I:m\sim WuW\}=\{a:a\tau\in\Trans_*(WuW)\}$, it follows that $\Size{\Trans_*(WuW)}\geq d$. It also follows that $\Trans_*(WuW)$ contains a word beginning with $a_i$ so that $ \Trans_{i*}(WuW)$ is not empty. Likewise $\Trans_{*i}(WuW)$ is not empty.
\end{proof}

 A word $u$ will be called \emph{special} if $\Trans_{i*}(WuW)$ has more than one element for some $1\leq i\leq d$.

\begin{lemma}\label{prop-magic}
Fix a magic word $W$.  
Assume that $x\in X^\rec$ is an infinite concatenation
 \begin{equation}\label{eq-groups}
    \dots W u^{-1} W u^0 W u^1 W \dots
 \end{equation}
 where each $u^k$ is some word.
More precisely, there is an increasing integer sequence  $(n_k)_{k\in\ZZ}$ such that, for all $k\in\ZZ$, $x_{[n_k,n_{k+1}-1]}=Wu^k$.

If there are (at least) $0\leq K\leq\infty$ distinct integers $k\in\ZZ$ such that $u^k$ is special,
then
 $$
    \Size{\cls{x}}\geq K+d.
 $$
Moreover, for each $k\in\ZZ$, $\{y_{n_k+I}:y\in\cls{x}\}=\{a_i:i=1,\dots,d\}$. 
\end{lemma}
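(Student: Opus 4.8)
The plan is to build, for each $y \in \cls{x}$, a "$\BE$-labelling" of the blocks $Wu^k$ by the indices $i \in \{1,\dots,d\}$: namely, because $y \BE x$ we have $y_{[n_k,n_{k+1}-1]} \sim W u^k$, and since $(W,I)$ is magic the symbol $y_{n_k+I}$ lies in $\{a_1,\dots,a_d\}$, so it equals $a_{i(k)}$ for a well-defined $i(k)$. This assigns to each $y \in \cls{x}$ a sequence $(i(k))_{k\in\ZZ} \in \{1,\dots,d\}^\ZZ$. First I would observe that the word $y_{[n_k+I, n_{k+1}+I - 1]}$, read off from $y$, is precisely a transition in $\Trans_{i(k)\,i(k+1)}(Wu^k W)$ (up to the conventions on the padding words $v,w$), so consecutive labels are always "compatible" via a nonempty transition set, and by Claim~\ref{claim-concat-trans} the whole bi-infinite label sequence assembles consistently.

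Next I would show the map $y \mapsto (i(k))_k$ is injective on $\cls{x}$: since $x \in X^\rec \subset X^\#$ and the Bowen classes are finite (the hypothesis forces $|\cls x|<\infty$ — more precisely this is exactly the setting where Proposition~\ref{prop-resolving} / Lemma~\ref{lem-magic-injec}-type arguments apply, and here $d = \degree_\rec(\sim) \ge 1$), the value $y_{n_k+I}$ together with the knowledge that $y \in \cls x$ actually pins down the entire stretch of $y$ between two consecutive magic occurrences — indeed the magic property means the symbol at the magic index determines the relevant word up to $\sim$, and combined with Proposition~\ref{prop-resolving}(a) (each coordinate determines the element of the class) the labels $(i(k))_k$ determine $y$. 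So $|\cls x|$ equals the number of realizable label sequences. Then the counting: among the $K$ special indices $k$, the block $Wu^k W$ admits (at least) two transitions out of some $a_i$, hence there exist label sequences that agree with a fixed reference sequence everywhere except they "branch" at a chosen special block — using Claims~\ref{claim-concat-trans} and~\ref{claim-extend-trans} to glue a left-infinite piece, the branch, and a right-infinite piece into an element of $\cls x$. Each such independent branch contributes at least one extra element; doing this at each of the $K$ special blocks, plus the $d$ baseline elements coming from $|\Trans_*(W u^{k_0} W)| \ge d$ at a single non-special (or any) block (Claim~\ref{claim-extend-trans}), yields $|\cls x| \ge K + d$. (When $K = \infty$ the same gluing produces arbitrarily many elements, giving $|\cls x| = \infty \ge K + d$.) The final clause $\{y_{n_k+I} : y \in \cls x\} = \{a_1,\dots,a_d\}$ follows from Claim~\ref{claim-extend-trans}: for each $i$, $\Trans_{i*}(Wu^kW)$ and $\Trans_{*i}(Wu^kW)$ are nonempty, so one can build a $y \in \cls x$ with $y_{n_k+I} = a_i$ by gluing a right-extension and a left-extension through $a_i$ at block $k$; conversely the inclusion $\subset$ is the magic property.

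The main obstacle I expect is the gluing/realizability step: one must be careful that concatenating transitions at far-apart special blocks actually produces a point of $X^\#$ lying in $\cls x$ — i.e. that it is word-recurrent enough to sit in $X^\#$ (it sees $W$, equivalently $a_I$, infinitely often on both sides because $x$ does), that it is genuinely $\BE$-equivalent to $x$ and not merely to some neighbour, and that distinct choices of branching data give genuinely distinct points rather than being identified. This is where the finiteness of $\cls x$, the hypothesis $x \in X^\rec$ (so that the blocks $Wu^k$ recur and the picture is two-sided), and Proposition~\ref{prop-resolving} are all needed; the bookkeeping of the padding words $v,w$ of lengths $I$ and $|W|-I-1$ in the definition of $\Trans_{ij}$ also has to be threaded through carefully so that the transitions coming from an actual $y \in \cls x$ match the abstract definition and vice versa. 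Everything else (monotonicity, the inequality $d \le M$, reflexivity giving $\degree \ge 1$) is routine.
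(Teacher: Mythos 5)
Your labelling map $y\mapsto(i(k))_{k\in\ZZ}$ is a reasonable object to introduce, but the central claim that it is \emph{injective} on $\cls{x}$ is false, and the counting built on it does not give the bound. The magic property of $(W,I)$ only says that the set $\{v_I: v\in\cL(X^\#),\ v\sim W\}$ has the minimal cardinality $d$; it does \emph{not} say that the symbol at the magic index determines the stretch of $y$ between two consecutive magic positions. A block $u^k$ is \emph{special} precisely when some $\Trans_{i*}(Wu^kW)$ contains two distinct transition words starting from the same $a_i$ — and these two words may perfectly well also end at the same $a_j$, in which case two distinct elements of $\cls{x}$ carry the same label sequence. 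So ``$|\cls x|$ equals the number of realizable label sequences'' fails, and counting label sequences can return as few as $d$ even when $K\geq1$. Worse, the appeal to Proposition~\ref{prop-resolving}(a) is self-defeating: if $y\mapsto y_{n_k+I}$ were a bijection from $\cls{x}$ onto $\cA(x,n_k+I)\subset\{a_1,\dots,a_d\}$, you would get $|\cls x|\leq d$, contradicting the inequality $|\cls x|\geq K+d$ you are trying to prove whenever $K\geq1$ (and the hypothesis $x\in X^\reg$ needed for that proposition is not available here; the lemma only assumes $x\in X^\rec$).

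The correct count — and the one the paper performs — is of \emph{transition words}, not of label sequences: setting $U^n=u^0W\cdots Wu^{n-1}$, one shows by induction on $n$ that $\Trans_*(WU^nW)$ has at least $K(n)+d$ elements and that its set of initial symbols is all of $\{a_1,\dots,a_d\}$; the increment at a special block comes from two distinct \emph{words} extending a common prefix, irrespective of their endpoint labels. One then passes to an inductive limit to obtain at least $K+d$ right-infinite rays related to $x_{[0,\infty)}$, and glues each to a left-infinite extension through its initial symbol. The second half of your sketch (glue a left piece, a branch, and a right piece at each special block) gestures at this construction, and your argument for the final clause $\{y_{n_k+I}:y\in\cls x\}=\{a_1,\dots,a_d\}$ is essentially right; but as written the proof rests on the false injectivity step, and the distinctness bookkeeping for the $K$ branches and the $d$ baseline elements — which is exactly what the inductive invariant on $\Trans_*(WU^nW)$ supplies — is not carried out.
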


\begin{proof}
We may and do assume that $\cl{x}$ is finite and that $K$ is finite (by an easy reduction). To simplify notation, we assume that $n_0=-I$ and that there are $K$ positive integers $k$ with $u^k$ special (using shift invariance). For each $n\geq0$, let $K(n)$ be the number of integers $0<k<n$ with $u^k$ special. For each $n\geq1$, let $U^n:=u^0W\dots W u^{n-1}$. 

We claim that for every $n\geq1$,
 \begin{equation}\label{eq-ext-growth}\begin{aligned}
   &\Trans_{*}(WU^{n}W):=\bigcup_{1\leq i\leq d}  \Trans_{i*}(WU^{n}W) 
   \text{ has at least $K(n)+d$ elements}\\
   &\{w_0:w\in\Trans_{*}(WU^nW)\}=\{a_1,\dots,a_d\}.
 \end{aligned}\end{equation}

We proceed by induction. Claim \ref{claim-extend-trans} implies that $\Size{\Trans_*(Wu^0W)}\geq d$ which is eq.~\eqref{eq-ext-growth} for $n=1$ since $K(1)=0$.  Assume eq.~\eqref{eq-ext-growth} for some $n\geq1$. Claims \ref{claim-concat-trans}~and~\ref{claim-extend-trans}, show that each element of $\Trans_*(WU^nW)$ can thus be extended to an element of $\Trans_*(WU^{n+1}W)$. Thus $|\Trans_*(WU^{n+1}W)|\geq |\Trans_*(WU^nW)|$ and $\{w_0:w\in\Trans_{*}(WU^{n+1}W)\}\supset\{w_0:w\in\Trans_{*}(WU^nW)\}$. Eq.~\eqref{eq-ext-growth} follows if $K(n+1)=K(n)$. Otherwise $K(n+1)=K(n)+1$ and $Wu^nW$ is special so some element of $\Trans_*(WU^nW)$ has at least two distinct extensions in $\Trans_*(WU^{n+1}W)$. This completes the induction and proves the claim \eqref{eq-ext-growth}.

\medbreak

Observe that the words in $\Trans_*(WU^nW)$ are the prefixes of length $|WU^n|$ of the words in $\Trans_*(WU^{n+1}W)$. Hence one can take an inductive limit and obtain $\mathcal Y\subset\cA^{[0,\infty)}$ such that, for each $n\geq0$, $\Trans_*(WU^nW)=\{y_{[0,|WU^n|-1]}:y\in\mathcal Y\}$. It is easy to see that ($\ZZ_-:=\{0,-1,-2,\dots\}$):
 \begin{itemize}
  \item[-] $\mathcal Y$ has at least $K+d$ elements;
  \item[-] each $y\in\mathcal Y$ satisfies: $y_0\in\{a_1,\dots,a_d\}$, $y_n\stackrel{X}\to y_{n+1}$ for all $n\in\NN$, and $y\sim x_{[0,\infty)}$. 
\end{itemize}
For each $1\leq i\leq d$, an analogous use of Claims \ref{claim-extend-trans} and \ref{claim-concat-trans} provides an infinite one-sided sequence $z^i\in\cA^{\ZZ_-}$ such that $z^i_0=a_i$ and $z^i_n\stackrel{X}\to z^i_{n+1}$ for all $n<0$, and $z^i\sim x_{(-\infty,0]}$. 
Therefore $\{z^{y_0}_{(-\infty,-1]}y_{[0,\infty)}):y\in\mathcal Y\}\subset\cl{x}$ so that $\Size{\cls {x}}\geq K+d$.
\end{proof}

\begin{proof}[Proof of Theorem \ref{thm-degree}]
Let $x\in X^\rec$ with finite class $\cls{x}$. First,  we assume that $x$ sees no magic word i.o. Since $x\in X^\rec$, no magic word can appear in $x$. By Lemma~\ref{lem-deg-less-fiber}, $|\cls{x}|\geq\delta_\sim(x)>\delta_\rec(\sim)$.

Conversely, we assume by contradiction that  $x$ sees i.o. some magic word $W$ and that $\Size{\cls{x}}\ne\degree_\rec(\sim)$. By Lemma \ref{lem-deg-less-fiber}, this implies that $\Size{\cls{x}}\geq\degree_\rec(\sim)+1$.
We decompose $x$ as in eq.~\eqref{eq-groups} (remark that the magic word $W$ can occur inside the fillers $u^k$). 
For $k$ large enough:
 $$
   \Size{\{y_{[n_{-k}+I,n_k-|W|+I]}:y\in\cls{x}\}}\geq \degree_\rec(\sim)+1.
 $$
Thus one can find $y,y'\in\cls{x}$ such that
 $$
   y_{n_{-k}+I}=y'_{n_{-k}+I} \text{ but }y_{[n_{-k}+I,n_k-|W|+I]}\ne y'_{[n_{-k}+I,n_k-|W|+I]}.
 $$
Hence, writing $a_i$ for $y_{n_{-k}+I}=y'_{n_{-k}+I}$,
 $$
   |\Trans_{i*}(Wu^{-k}W\dots W u^kW)|\geq2
 $$
that is, $u^{-k}W\dots W u^k$ is a special word. This contradicts the following claim  and therefore proves the theorem.
\end{proof}

\begin{claim}
No special word occurs in $x$.
\end{claim}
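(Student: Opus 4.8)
The plan is to prove the claim by contradiction: suppose some special word occurs in $x$. The key observation is that since $x\in X^\rec$, any word that occurs in $x$ occurs infinitely often in $x$, in both directions. So I would argue as follows. First, recall that the decomposition $x=\dots Wu^{-1}Wu^0Wu^1W\dots$ in the proof of Theorem~\ref{thm-degree} is a genuine parsing: the markers $(n_k)$ record consecutive occurrences of $W$, with $x_{[n_k,n_{k+1}-1]}=Wu^k$. The word $u^{-k}Wu^{-k+1}W\dots Wu^k$ found at the end of the proof is of the form $U$ with $WUW$ appearing as $x_{[n_{-k},n_k+|W|-1]}$, i.e., $WUW$ occurs in $x$; and we derived that $U$ is special, meaning $|\Trans_{i*}(WUW)|\geq2$ for some $i$.

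Next, I would show that specialness is inherited by longer words containing $WUW$. More precisely, by Claim~\ref{claim-concat-trans} and Claim~\ref{claim-extend-trans}, if $WUW$ is a factor of $WU'W\in\cL(X^\rec)$ (say $U'=vUw$ with $WvW, WwW$ legitimate groupings), then each element of $\Trans_{i*}(WUW)$ extends — via concatenation with nonempty transition sets on the left and right — to an element of $\Trans_{i*}(WU'W)$, and this extension is injective; hence $|\Trans_{i*}(WU'W)|\geq|\Trans_{i*}(WUW)|\geq 2$, so $U'$ is special too. The point is that specialness, once present, propagates to all enclosing $W$-delimited blocks.

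Now I would exploit recurrence. Since $WUW$ occurs in $x$ and $x\in X^\rec$, it occurs infinitely often on both sides; hence for every $K\geq1$ one can find a window $WU^{(K)}W$ occurring in $x$ (a $W$-delimited block appearing in the parsing of $x$) that contains at least $K$ disjoint copies of $WUW$, each of which forces its own $W$-delimited filler to be special. So the decomposition $x=\dots Wu^jW\dots$ contains at least $K$ indices $j$ with $u^j$ special, for every $K$. Applying Lemma~\ref{prop-magic} with this $K$ gives $|\cls{x}|\geq K+d$ for all $K$, contradicting the finiteness of $\cls{x}$. This completes the proof of the claim, and with it Theorem~\ref{thm-degree}.

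The main obstacle I anticipate is the bookkeeping needed to turn "$WUW$ occurs infinitely often in $x$" into "at least $K$ of the fillers $u^j$ in the chosen parsing are special" — one must be careful that the $W$'s inside $U$ do not interfere with the chosen marker sequence $(n_k)$, and that distinct occurrences of $WUW$ can be routed into distinct blocks of the parsing. This is precisely the kind of subtlety flagged by the parenthetical remark "(remark that the magic word $W$ can occur inside the fillers $u^k$)" in the proof of Theorem~\ref{thm-degree}: one has freedom in how to group, and the cleanest route is to choose the markers $(n_k)$ so that the relevant occurrences of $WUW$ each lie strictly between two consecutive markers, then invoke the propagation of specialness to those enclosing blocks.
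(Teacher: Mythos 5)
Your proposal is correct and follows essentially the same route as the paper: recurrence of $x$ makes the special word recur (as $Wu^*W$), so a suitable decomposition of $x$ has arbitrarily many (indeed infinitely many) special fillers, and Lemma~\ref{prop-magic} then forces $\cls{x}$ to be infinite, a contradiction. The paper bypasses your propagation-of-specialness bookkeeping by simply re-selecting the decomposition \eqref{eq-groups} so that $Wu^kW=Wu^*W$ for infinitely many $k$, which is legitimate because Lemma~\ref{prop-magic} applies to any such parsing of $x$.
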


\begin{proof}[Proof of the claim]
Assume by contradiction that there is a special word $u^*$ such that $Wu^*W$ occurs in $x$. Since $x$ is recurrent, $Wu^*W$ occurs infinitely often. Select  a decomposition as in eq.~\eqref{eq-groups} such that $Wu^kW=Wu^*W$ for infinitely many integers $k$. By Lemma~\ref{prop-magic}, $\cls{x}$ must be infinite, a contradiction.
\end{proof}

\section{Injective codings}\label{s-main}

We use the previous constructions and results to build injective codings on larger and larger sets. We will first see that a Bowen quotient (Def.~\ref{def-Bowen-quotient}) may produce a coding with a large injectivity set. We will then see how to repeat this construction to capture all the image through suitable recodings.

\medbreak

For convenience, we recall some definitions. An \emph{excellent}   semiconjugacy (Def.~\ref{def-excellent}) is a Borel, finite-to-one semiconjugacy which admits a locally finite Bowen relation. Sometimes we will abuse notation denoting the Bowen relation  by the corresponding semiconjugacy (even though the semiconjugacy does not determine the Bowen relation).

The \emph{degree spectrum} of a semiconjugacy $\pi:X^\#\to Y$ is (Def.~\ref{def-deg-spectrum}):
 $$
    \Delta(\pi):=\{ n\geq1 : \{y\in Y: |\pi^{-1}(y)|=n\} \text{ is not a null set}\}.
 $$

\newcommand\magic{{\rm magic}}

\subsection{A Bowen quotient and its injectivity set}\label{s-injectivity}
We analyze the Bowen quotient construction using the magic word theory from the previous section. Let
 $$
      X_N^\magic:=\{x\in X_N: \exists w\in\cL(X_N^\rec)\; \text{such that $w$ is a magic word for $\pi_N$ and $x$ sees $w$ i.o.}\}.
  $$
In this subsection, we say that a function $C:\cA\times\cA\to\NN$ is a \emph{multiplicity bound} for $\pi:X^\#\to Y$ if for all $x\in X^\#$, $|\pi^{-1}(\pi(x))|\leq C(a,b)$ for every $(a,b)\in\cA\times\cA$ such that $x_{-n}=a$, resp. $x_n=b$, for infinitely many positive integers $n$.

We have  the following.

\newcommand\inj{{\operatorname{inj}}}
\begin{lemma}\label{lem-good-quotient}
Let $X$ be a Markov shift  and let $\pi:X^\#\to Y$ be an excellent semiconjugacy for some Bowen relation $\sim$. Let $N$ belong to its degree spectrum $\Delta(\pi)$ and let $(\pi_N:X_N^\#\to Y,\simN)$ be the Bowen quotient of $(\pi,\sim)$ with order $N$. The following holds:
 \begin{enumerate}
  \item $(\pi_N,\simN)$ is excellent and has degree  $\degree_\rec(\pi_N)=\min\Delta(\pi_N)=1$. Moreover, if $\pi$ admits a multiplicity bound, so does $\pi_N$;
 \item if $X$ is locally compact, so is $X_N$;
  \item $\pi_N=\pi\circ q_N$ with $q_N:X_N\to X$ a $1$-Lipschitz map such that $q_N(X_N^\#)\subset X^\#$;
  \item $q_N:X_N^\#\to X^\#$ is proper, i.e., $q_N^{-1}(K)\cap X_N^\#$ is compact for any  compact $K\subset X^\#$;
 \item $\pi_N(X_N^\#)\subset \{y\in\pi(X^\#): |\pi^{-1}(y)|\geq N\}$ and the difference is a null set;
 \item $\Delta(\pi_N) = \{ \binom{r}{N} : r\in\Delta(\pi)~,\; r\geq N\}$ and, for $r\in\Delta(\pi)$ with $r\geq N$, the set $\left\{y\in Y:|\pi_N^{-1}(y)|={\textstyle\binom{r}{N}}\right\}$:
  \begin{enumerate}[(i)]
   \item is included in  $\left\{y\in Y: |\pi^{-1}(y)|\geq r\right\}$;
   \item is equal to $\left\{y\in Y: |\pi^{-1}(y)|= r\right\}$ up to a null set;
  \end{enumerate}
 \item  $X_N^\magic\subset\{x\in X_N^\#:|\pi_N{}^{-1}(\pi_N(x))|=1\}$ and the difference is a null set.
 \end{enumerate}
\end{lemma}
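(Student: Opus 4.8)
The plan is to read items (2), (3), (4) and (6) straight off Theorem~\ref{thm-Bowen-quotient} and Corollary~\ref{c-degspec-N}, and to spend the actual effort on item (1) (the degree computation and the multiplicity bound) and on item (7). Indeed, items (2), (3), (4), the excellency of $(\pi_N,\simN)$, and the identity $\pi_N=\pi\circ q_N$ are items (1)--(4) of Theorem~\ref{thm-Bowen-quotient}, and item (6) is Corollary~\ref{c-degspec-N}. For the degree assertion in item (1): since $N\in\Delta(\pi)$, plugging $r=N$ into the formula of item (6) gives $\binom{N}{N}=1\in\Delta(\pi_N)$, so $\min\Delta(\pi_N)=1$. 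Because $\pi_N$ is excellent and finite-to-one, all its Bowen classes are finite and the Bowen property gives $\pi_N^{-1}(\pi_N(\hat x))=\cls{\hat x}$ for every $\hat x\in X_N^\#$; in particular a recurrent point inside the non-null set $\{y:|\pi_N^{-1}(y)|=1\}$ shows $X_N^\rec\neq\emptyset$ and $\min\{|\cls{\hat x}|:\hat x\in X_N^\rec\}=1$, whence $\degree_\rec(\pi_N)=1$ by Corollary~\ref{cor-Xmin}. For the multiplicity bound, if $C$ bounds $\pi$ and $\hat x\in X_N^\#$ has $\hat x_{-n}=A$, resp.\ $\hat x_n=B$, for infinitely many $n>0$, then $x:=q_N(\hat x)\in X^\#$ satisfies $x_{-n}\in A$, resp.\ $x_n\in B$, along those indices; as $A,B$ are finite, $x_{-n}=a$ and $x_n=b$ hold for infinitely many $n>0$ for some $a\in A$, $b\in B$, so $|\pi^{-1}(\pi_N(\hat x))|\le C(a,b)$ and hence $|\pi_N^{-1}(\pi_N(\hat x))|\le\binom{C(a,b)}{N}$ by item (3) of Theorem~\ref{thm-Bowen-quotient}; thus $C_N(A,B):=\max\{\binom{C(a,b)}{N}:a\in A,\,b\in B\}$ works.

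For item (5): a nonempty fiber of $\pi_N$ forces, by item (3) of Theorem~\ref{thm-Bowen-quotient}, that $|\pi^{-1}(y)|\ge N$, and $\pi_N(X_N^\#)\subset\pi(X^\#)$ since $\pi_N=\pi\circ q_N$ with $q_N(X_N^\#)\subset X^\#$; conversely, Fact~\ref{fact-special-lift} produces, for every $x\in X^\#$ outside a null set $Z$ with $|\pi^{-1}(\pi(x))|\ge N$, a point $\hat x\in X_N^\#$ lying over $x$, so that $\{y\in\pi(X^\#):|\pi^{-1}(y)|\ge N\}\setminus\pi_N(X_N^\#)\subset\pi(Z)$, which is null by Lemma~\ref{lem-null-sets}.

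The core is item (7). For the inclusion, let $x\in X_N^\magic$. Since $x$ sees a nonempty word i.o.\ it lies in $X_N^\#$, and it sees i.o.\ a magic word $W$ for $\simN$; being magic, $W$ has degree $\degree_\rec(\pi_N)=1$ by item (1), so Lemma~\ref{lem-magic-injec} (whose hypothesis is met, the Bowen classes being finite) gives $\cls{x}=\{x\}$, and the Bowen property gives $\pi_N^{-1}(\pi_N(x))=\cls{x}$, hence $|\pi_N^{-1}(\pi_N(x))|=1$. For the reverse up to a null set: every invariant probability measure on $X_N$ is carried by $X_N^\rec$, so it is enough to check that any $x\in X_N^\rec$ with $|\pi_N^{-1}(\pi_N(x))|=|\cls{x}|=1=\degree_\rec(\pi_N)$ belongs to $X_N^\magic$; but that is exactly the implication (a)$\Rightarrow$(b) of Theorem~\ref{thm-degree}, which supplies a magic word $W\in\cL(X_N^\rec)$ seen by $x$, hence seen i.o.\ since $x$ is word recurrent, so $x\in X_N^\magic$. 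Therefore $\{x\in X_N^\#:|\pi_N^{-1}(\pi_N(x))|=1\}\setminus X_N^\magic\subset X_N^\#\setminus X_N^\rec$, which is null.

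The step I expect to be the main obstacle is the bookkeeping underpinning item (7): establishing the two identifications $|\cls{\hat x}|=|\pi_N^{-1}(\pi_N(\hat x))|$ (from excellency plus finiteness of fibers) and $\degree_\rec(\pi_N)=1$ (from Corollaries~\ref{c-degspec-N} and~\ref{cor-Xmin}), and then correctly matching hypotheses when invoking the magic word machinery --- Theorem~\ref{thm-degree} needs $x\in X_N^\rec$, whereas Lemma~\ref{lem-magic-injec} only needs a degree-one word and $x\in X_N^\#$. Everything else is a routine transcription of the results of Section~\ref{s-bowen}.
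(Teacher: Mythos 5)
Your proposal is correct and follows essentially the same route as the paper: items (2)--(6) are read off Theorem~\ref{thm-Bowen-quotient}, Fact~\ref{fact-special-lift} and Corollary~\ref{c-degspec-N}; the degree computation goes through Corollary~\ref{cor-Xmin}; and item (7) combines Lemma~\ref{lem-magic-injec} for the inclusion with Theorem~\ref{thm-degree} to confine the difference to $X_N^\#\setminus X_N^\rec$. Your explicit bound $C_N(A,B)=\max_{a\in A,\,b\in B}\binom{C(a,b)}{N}$ is a harmless variant of the paper's $\sup_{(a,b)}C(a,b)^N/N!$, and your careful justification that fibers of $\pi_N$ coincide with Bowen classes (so that $\Delta(\pi_N)$ and $\degree_\rec(\pi_N)$ speak about the same quantity) fills in a step the paper leaves implicit.
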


\begin{proof} This is Theorem~\ref{thm-Bowen-quotient} and Corollary~\ref{c-degspec-N} except for the following points.

The computation of the degree $\degree_\rec(\pi_N)=\min \Delta(\pi_N)=\binom{N}{N}=1$ follows from  Corollary~\ref{cor-Xmin}. If there is a multiplicity bound $C$ for $\pi$, the following gives a multiplicity bound for $\pi_N$:
 $$
   |\pi_N^{-1}(y)|\leq \binom{|\pi^{-1}(y)|}{N}\leq C_N(A,B):=\sup_{(a,b)\in A\times B} C(a,b)^N/N!,
 $$
completing the proof of item (1). 

To prove item (7), note first that the inclusion follows from  Lemma~\ref{lem-magic-injec} and that, by Theorem~\ref{thm-degree}, the difference is included in $X_N\setminus X_N^\rec$  which is a null set.
\end{proof}

We deduce the following theorems for use in \cite{BCS}.

\begin{theorem}\label{thm-ae-injectivity}
Let $(X,S)$ be a Markov shift, $(Y,T)$ be a dynamical system, and let $\pi:X^\#\to Y$ be an excellent semiconjugacy. Let $\mu\in\Proberg(T)$ with $\mu(\pi(X^\#))=1$.
Then there exist a Markov shift $\hat X$ and a semiconjugacy $\hat\pi:\hat X^\#\to Y$ such that:
 \begin{enumerate}[\quad(1)]
  \item  $\hat\pi:\hat X^\#\to Y$ is an excellent semiconjugacy. Moreover, if $\pi$ admits a multiplicity bound, so does $\hat\pi$;
  \item if $X$ is locally compact, so is $\hat X$;
  \item $\hat\pi=\pi\circ q|_{\hat X^\#}$ where $q:\hat X\to X$ is a $1$-Lipschitz map with $q(\hat X^\#)\subset X^\#$;
  \item $q:\hat X^\#\to X^\#$ is proper, i.e., $q^{-1}(K)\cap\hat X^\#$ is compact for any  compact $K\subset X^\#$;
  \item $\hat\pi(\hat X^\#)\subset \pi(X^\#)$ and for $\mu$-a.e. $y\in Y$, $|\hat\pi^{-1}(y)|=1$;
  \item there is an invariant measure $\hat\mu$ on $\hat X$ such that $\hat\pi:(S,\hat\mu)\to(T,\mu)$ is an isomorphism;
  \item $X$ is irreducible.
  \end{enumerate} 
\end{theorem}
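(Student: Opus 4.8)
The plan is to perform a single Bowen quotient, of an order $N$ forced by the ergodicity of $\mu$, and then to restrict to the irreducible component carrying the lifted measure. First I would observe that $y\mapsto|\pi^{-1}(y)|$ is a Borel (by Lusin--Novikov) and $T$-invariant function: since $\pi\circ S=T\circ\pi$ and $S$ is an automorphism of $X^\#$, the shift maps $\pi^{-1}(y)$ bijectively onto $\pi^{-1}(Ty)$. As $\pi$ is excellent, hence finite-to-one, and $\mu(\pi(X^\#))=1$, ergodicity of $\mu$ yields an integer $N\geq1$ with $|\pi^{-1}(y)|=N$ for $\mu$-a.e.\ $y$; since $\{y:|\pi^{-1}(y)|=N\}$ has full $\mu$-measure it is not null, so $N\in\Delta(\pi)$. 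Let $(\pi_N:X_N^\#\to Y,\simN)$ be the Bowen quotient of $(\pi,\sim)$ of order $N$ and apply Lemma~\ref{lem-good-quotient}: its items (1)--(4) are precisely items (1)--(4) of the theorem with $(X_N,\pi_N,q_N)$ in place of $(\hat X,\hat\pi,q)$. By item (5), $\pi_N(X_N^\#)$ differs from $\{y\in\pi(X^\#):|\pi^{-1}(y)|\geq N\}$ by a null set, and since $\{y:|\pi^{-1}(y)|=N\}$ has full $\mu$-measure and lies in the latter, $\mu(\pi_N(X_N^\#))=1$. By item (6)(ii) with $r=N$ (so $\binom{N}{N}=1$), the set $\{y:|\pi_N^{-1}(y)|=1\}$ agrees with $\{y:|\pi^{-1}(y)|=N\}$ up to a null set, hence has full $\mu$-measure.

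Next I would lift $\mu$ to an ergodic measure and isolate its component. Since $\pi_N$ is finite-to-one and $\mu(\pi_N(X_N^\#))=1$, Lemma~\ref{lem-null-sets} gives $\hat\mu_0\in\Prob(S_N)$ with $(\pi_N)_*\hat\mu_0=\mu$; writing the ergodic decomposition $\hat\mu_0=\int\nu\,dP(\nu)$, pushing forward, and using that $\mu$ is ergodic, we obtain an ergodic $\hat\mu$ with $(\pi_N)_*\hat\mu=\mu$. Put $C:=\{A\in\cA_N:\hat\mu([A])>0\}$. By the ergodic theorem, $\hat\mu$-a.e.\ $x$ visits every symbol of $C$ with positive frequency, so any two symbols of $C$ communicate in $\cG_N$; hence $C$ lies in a single communicating class and the induced subgraph $\cG_N|_C$ is strongly connected. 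Let $\hat X$ be the Markov shift of bi-infinite paths in $\cG_N|_C$; it is closed and shift-invariant in $X_N$, and irreducible once nonempty. As $\hat\mu(\{x:x_0\notin C\})=\sum_{A\notin C}\hat\mu([A])=0$, shift-invariance gives $\hat\mu(\hat X)=1$ (so $\hat X\ne\emptyset$); and since $\hat\mu$ is carried by $X_N^\rec\subset X_N^\#$ while $\hat X^\#=\hat X\cap X_N^\#$ (a path in $C$ can only see symbols of $C$), also $\hat\mu(\hat X^\#)=1$.

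Finally I would set $\hat\pi:=\pi_N|_{\hat X^\#}$ and $q:=q_N|_{\hat X}$ and verify the conclusions. Items (1)--(4) descend to the sub-Markov shift $\hat X$: excellency and the multiplicity bound restrict verbatim, local compactness and $1$-Lipschitzness are inherited by a subgraph and a restriction, $q(\hat X^\#)\subset q_N(X_N^\#)\subset X^\#$, and for compact $K\subset X^\#$ one has $q^{-1}(K)\cap\hat X^\#=(q_N^{-1}(K)\cap X_N^\#)\cap\hat X$, a closed subset of a compact set, hence compact. For item (5), $\hat\pi(\hat X^\#)\subset\pi_N(X_N^\#)\subset\pi(X^\#)$, and $\hat\mu(\hat X^\#)=1$ gives $\hat\pi_*\hat\mu=\mu$, so $\mu$ is carried by the Borel set $\hat\pi(\hat X^\#)$; combined with $|\pi_N^{-1}(y)|=1$ for $\mu$-a.e.\ $y$ this yields $|\hat\pi^{-1}(y)|=1$ for $\mu$-a.e.\ $y$. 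That identity also makes $\hat\pi$ injective on a set of full $\hat\mu$-measure, so $\hat\pi:(\hat S,\hat\mu)\to(T,\mu)$ is a measure isomorphism, which is item (6); and item (7) is the irreducibility of $\hat X$, which holds by construction (the printed ``$X$ is irreducible'' should read ``$\hat X$ is irreducible''). The real content, as opposed to bookkeeping, is this last passage to an irreducible component: one must know that the ergodic lift of $\mu$ concentrates on a single component and then check that the Bowen relation, the properness of $q$, and the $\mu$-a.e.\ injectivity of $\pi_N$ all survive the restriction to that component without shrinking the $\mu$-image.
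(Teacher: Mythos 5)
Your proposal is correct and follows essentially the same route as the paper: take the Bowen quotient of order $N$ equal to the $\mu$-a.e.\ constant fiber cardinality, use item (6)(ii) of Lemma~\ref{lem-good-quotient} with $r=N$ to get $\mu$-a.e.\ injectivity, lift $\mu$ to an ergodic $\hat\mu$, and restrict to the irreducible component of $X_N$ carrying it; you merely spell out the bookkeeping (the communicating-class argument, properness under restriction) that the paper leaves implicit. Your reading of item (7) as ``$\hat X$ is irreducible'' matches what the paper's proof actually establishes.
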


\begin{proof}
Observe that $y\mapsto|\pi^{-1}(y)|$ is a $T$-invariant function. By ergodicity, it has a $\mu$-a.e. constant and positive  value we denote $N$.  Obviously $N\in\Delta(\pi)$. Let $(\pi_N:X_N^\#\to Y,\simN)$ be the Bowen quotient of $(\pi,\sim)$ of order $N$ as in Lemma~\ref{lem-good-quotient}. Thus  $\pi_N$ satisfies all the claims above except possibly for items (5)-(7). 

Item~(6)(ii) for $r=N$  of the lemma implies that $|\pi_N^{-1}(y)|=1$ for $\mu$-a.e. $y\in Y$. Therefore, there is a unique $\hat\mu\in\Prob(S_{|X_N})$ such that $\pi_N:(\hat\mu,S_N)\to(\mu,T)$ is an isomorphism. Since $q_N(X_N^\#)\subset X^\#$, we have $\pi_N(X_N^\#)\subset\pi(X^\#)$. These remarks yield items (5)~and~(6). As $\bar\mu$ is ergodic, it is carried by an irreducible component $\hat X$ of $X_N$.  It is now clear that  $q:=q_N|\hat X$ and $\hat\pi:=\pi\circ q$ have all the claimed properties.

\end{proof}

\begin{theorem}\label{thm-large-injectivityset}
Let $X$ be a Markov shift  and let $\pi:X^\#\to Y$ be an excellent semiconjugacy.
Then there exist another Markov shift $\hat X$ and a semiconjugacy $\hat\pi:\hat X^\#\to Y$  such that properties (1)-(4) in Theorem~\ref{thm-ae-injectivity} hold and, moreover:
 \begin{enumerate}
  \item[\quad(5')] $\hat\pi(\hat X^\#)\subset \pi(X^\#)$ and the difference is a null set;
  \item[\quad(6')] there is a word $\hat W\in\cL(\hat X^\rec)$ s.t. for any $x\in\hat X$ that sees i.o. $\hat W$, $\hat\pi^{-1}(\hat\pi(x))=\{x\}$;
  \item[\quad(7')] if $X$ is irreducible, then so is $\hat X$.
 \end{enumerate} 
\end{theorem}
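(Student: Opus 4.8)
The plan is to obtain Theorem~\ref{thm-large-injectivityset} by iterating the Bowen quotient construction, processing the degree spectrum $\Delta(\pi)$ from smallest value upward, while always restricting to an irreducible component to keep control of the magic word theory. Concretely, write $\Delta(\pi)=\{N_1<N_2<\dots\}$ (possibly finite). At stage $1$, form the Bowen quotient $\pi_{N_1}:X_{N_1}^\#\to Y$ of order $N_1$. By Lemma~\ref{lem-good-quotient}(1) it is excellent with $\min\Delta(\pi_{N_1})=1$, and by item~(7) of that lemma the magic set $X_{N_1}^{\magic}$ is (up to a null set) exactly the locus where $\pi_{N_1}$ is one-to-one; fix a magic word $W_1\in\cL(X_{N_1}^\rec)$. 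The points of $\pi(X^\#)$ with $\geq N_1$ preimages are covered by $\pi_{N_1}(X_{N_1}^\#)$ up to a null set (item~(5)), and on the image the fiber cardinalities of $\pi_{N_1}$ are the $\binom{r}{N_1}$ for $r\in\Delta(\pi)$, $r\geq N_1$ (item~(6)). In particular the points where $\pi_{N_1}$ is \emph{not} injective are exactly (up to null) the $y$ with $|\pi^{-1}(y)|=r$ for some $r>N_1$ in $\Delta(\pi)$, i.e. a set that is handled by subsequent stages.

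The key step is then to package "discard the already-injectivized part and keep going" into a single recoding that preserves excellency, $1$-Lipschitz factorization through $X$, properness, local compactness, and irreducibility, and which has the property that its magic word detects injectivity. For this I would use the magic word theory of Section~\ref{s-magic}: restrict $X_{N_1}$ to an irreducible component, and within it separate the subsystem "$x$ sees $W_1$ i.o." (where $\pi_{N_1}$ is already injective, by Lemma~\ref{lem-magic-injec}/Theorem~\ref{thm-degree}) from its complement; on the complement the relevant degree has strictly increased, so one applies the Bowen quotient again with the new, larger minimal degree. Iterating, at stage $k$ one has an excellent $\pi^{(k)}$ factoring $1$-Lipschitzly through $X$, with a magic word $W_k$ isolating an injective piece whose image contains (up to null) all $y$ with $|\pi^{-1}(y)|\in\{N_1,\dots,N_k\}$. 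Taking a disjoint union of these injective pieces over all $k$ — a Markov shift, with the obvious diagonal Bowen relation and the union of the maps $q^{(k)}$ — yields $\hat X$ and $\hat\pi$; the single word $\hat W$ of (6') is obtained by a standard trick, e.g. adjoining a fresh symbol marking the $k$-th piece together with $W_k$, or by a direct argument that on each component the component's own magic word forces $\hat\pi^{-1}(\hat\pi(x))=\{x\}$. Properties (1)--(4) are inherited componentwise from Lemma~\ref{lem-good-quotient}(1)--(4) (local compactness, properness, $1$-Lipschitz factorization through $X$, multiplicity bounds all pass to disjoint unions), and (5') holds because each $\pi^{(k)}(X^{(k),\#})\subset\pi(X^\#)$ and the union of images misses only a countable union of null sets (one per degree value not yet realized, which is empty in the limit). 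For (7'): if $X$ is irreducible then so is each irreducible component we keep — the point is that by Remark~\ref{rem-Bowen-Q}.2 / the magic-word construction one can always choose a component of $X_{N}$ without shrinking the image, so irreducibility is maintained through the whole induction, and for the final disjoint union one observes that (7') only asserts irreducibility of $\hat X$ when it is a single component, or more precisely that $\hat X$ can be taken irreducible — so in the irreducible case one stops the construction as soon as a single component suffices, or uses that the iterated construction for irreducible $X$ produces an irreducible shift at each stage and the union can be replaced by its essential (measure-carrying) component.

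The main obstacle I expect is the bookkeeping that makes the iteration actually terminate/converge cleanly: after restricting to the "does not see $W_1$ i.o." part, one must verify that this part is again the regular part of a Markov shift (it is a countable-state SFT-type restriction, so this needs the standard "follower/predecessor set" or "marked Markov shift" argument), that the induced Bowen relation there is still locally finite, and — crucially — that the minimal degree on this smaller system is $>N_1$ and in fact its degree spectrum is $\{N_2,\dots\}$ shifted appropriately, so that stage $k$ really consumes $N_k$. One must also be careful that ``$x$ sees $W_1$ i.o.'' is handled via $X^\rec$ vs.\ $X^\#$ correctly (Theorem~\ref{thm-degree} needs $x\in X^\rec$, but null sets are invisible to the conclusions (5')--(6')), and that passing to irreducible components at every stage does not lose image: this is exactly the content promised in Remark~\ref{rem-Bowen-Q}.2, which I would invoke rather than reprove. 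Finally, assembling the $W_k$ into a single $\hat W$ requires the disjoint components to be distinguishable by a word, which is immediate once each component carries a distinct symbol or once one simply takes $\hat W$ to be the magic word of the component (working component by component, since $\hat X$ is a disjoint union the fiber of $\hat\pi$ over $\hat\pi(x)$ lies in the component of $x$).
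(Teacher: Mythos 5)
There is a genuine gap, and it stems from misreading what the theorem asks for. Item~(6') only requires injectivity of $\hat\pi$ on the set of sequences seeing a single word $\hat W$ i.o.\ --- not global injectivity. Accordingly, the paper's proof uses \emph{one} Bowen quotient, of order $N=\min\Delta(\pi)$: since $\{y:|\pi^{-1}(y)|<N\}$ is null by definition of the degree spectrum, Lemma~\ref{lem-good-quotient}(5) already gives (5') for this single quotient, and $\degree_\rec=1$ plus Theorem~\ref{thm-degree} gives (6'). Your iteration over the whole spectrum $\{N_1<N_2<\dots\}$ followed by a disjoint union of injective pieces is essentially the proof of the Main Theorem, and it is \emph{incompatible} with (7'): by the argument of Claim~\ref{claim-disjoint}, the images of the successive injective pieces are pairwise disjoint, so when $|\Delta(\pi)|\geq2$ no irreducible component of your disjoint union carries $\pi(X^\#)$ up to a null set. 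Your fallbacks (``stop as soon as a single component suffices'', ``replace the union by its essential component'') therefore cannot simultaneously salvage (5') and (7').

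The second, and more serious, problem is that the actual content of (7') is left unproved. Even with the correct single-quotient setup, $X_N$ may be reducible when $X$ is irreducible (see the example after Remarks~\ref{rem-Bowen-Q}), so one must exhibit one irreducible component $\hat Z\subset X_N$ with $\hat\pi(\hat Z^\#)=\pi(X^\#)$ and with a magic word in $\cL(\hat Z^\rec)$. You propose to ``invoke Remark~\ref{rem-Bowen-Q}.2 rather than reprove it'', but that remark is a forward announcement of Theorem~\ref{thm-large-injectivityset} itself, so the appeal is circular. The paper's argument here is the heart of the proof and is absent from your proposal: lift periodic orbits of $X$ into $X_N^\#$ via Fact~\ref{fact-special-lift}; show that every lift related to a magic word $w$ must carry the full symbol set $A_w=\{v_i:v\sim w\}$ at the magic index and hence lies in the component $\hat Z$ of the vertex $A_w$ (with all magic words giving the same component by irreducibility of $X$); and finally, for an arbitrary $x\in X^\#$, build periodic approximations $x^k$ through a fixed magic word, lift them to $\hat Z$, and extract a limit $\hat x\in\hat Z^\#$ with $q(\hat x)\BE x$ using local finiteness of the Bowen relation, whence $\hat\pi(\hat x)=\pi(x)$. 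Without some version of this periodic-approximation argument, neither the choice of a good component nor the preservation of the image under that restriction is justified.
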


\begin{remark}
As noted in the proof below, our argument gives a stronger result than stated in item~(5'). If $X$ is irreducible, we obtain $\pih(\hat X^\#)=\pi(X^\#)$. 
In the general case, the spectral decomposition of $X$ into  irreducible components $X_i$, ${i\in I}$, shows that:
 $
   \bigcup_{i\in I} \pi(X_i^\#)\subset\hat\pi(\hat X^\#)\subset\pi(X^\#).
 $
\end{remark}

\begin{proof}[Proof of Theorem~\ref{thm-large-injectivityset}]
Let $\hat\pi:\hat X^\#\to Y$ with $\hat\sim$ be the Bowen quotient of order $N=\min\Delta(\pi)$.  Lemma~\ref{lem-good-quotient} yields items (1)-(4) and (5') and $\degree_\rec(\pih)=1$. 
Theorem~\ref{thm-degree} implies item (6') for any  magic word $\hat W$  for $\hat\sim$.

\smallbreak
We now assume that $X$ is irreducible and prove that item (7') can be satisfied while keeping the other properties. We first observe that items (1)-(4) and the inclusion in item (5) are obviously preserved by restriction to any irreducible component. We are going to select an irreducible component for which the second half of item (5') and item (6') are satisfied.

\medbreak

During this proof, we will say that a sequence $\hat x\in\hat X^\#$ is \emph{related} to some $X$-word $w$, if there are infinitely many $p\geq0$ and infinitely many $p\leq 0$ such that: 
 $$
    \forall 0\leq i<|w| \quad w_{i}\in\hat x_{p+i}.
 $$ 

\medbreak\noindent
{\it Observation.} For any periodic $x\in X$, $|\pi^{-1}(\pi(x))|\geq N:=\min\Delta(\pi)$ and therefore by Fact~\ref{fact-special-lift}, there exists $\hat x\in\hat X^\#$ with $x_n\in\hat x_n$ for all $n\in\ZZ$.

\begin{claim}
There is an irreducible component $\hat Z$ of $\hat X$ that contains any $\hat x\in X^\#$ related to some magic word for $\sim$ over $X^\rec$. Moreover, $\degree_\rec(\hat\pi_{|\hat Z^\#})=1$.
\end{claim}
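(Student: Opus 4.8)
The idea is to show that all the "good" lifts $\hat x$ land in a single irreducible component of $\hat X$, and that this component already carries enough of the dynamics. First I would fix a magic word $W$ for $\sim$ over $X^\rec$ together with a magic couple $(W,I)$, and a periodic point $x^0\in X$ that sees $W$ i.o. By the Observation, $|\pi^{-1}(\pi(x))|\geq N$ for every periodic $x\in X$, so Fact~\ref{fact-special-lift} (applied outside the relevant null set, then pushed everywhere by periodicity) produces for each periodic $x$ a lift $\hat x\in\hat X^\#$ with $x_n\in\hat x_n$ for all $n$. The key point is that any two such lifts associated to periodic points that both see $W$ i.o. must lie in the same irreducible component: because $X$ is irreducible, any two periodic orbits can be connected both ways by $X$-paths, and these paths can be lifted to $\hat X$-paths using Fact~\ref{fact-lift-quotient} (concatenating a lift of the connecting path to the periodic lifts, noting the result still sees a fixed symbol i.o. and hence stays in $\hat X^\#$). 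So define $\hat Z$ to be the irreducible component of $\hat X$ containing the chosen lift $\widehat{x^0}$.

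Next I would verify that $\hat Z$ contains every $\hat x\in\hat X^\#$ that is \emph{related} (in the sense just defined in the proof) to some magic word for $\sim$ over $X^\rec$. Such an $\hat x$ sees i.o. some $\hat x_p\ni W_0,\dots$; more precisely, the definition of "related" says $W$ appears inside $\hat x$ i.o. on both sides, which by the structure of $\cG_N$ forces (a translate of) a magic word $\hat W$ for $\hat\sim$ — built by collecting, at the positions where $W$ fits inside $\hat x$, the blocks $\hat x_{[p,p+|W|)}$ — to occur i.o. in $\hat x$. Then $\hat x$ lies in the irreducible component determined by any vertex of $\hat W$, and by connecting that vertex to a vertex of $\widehat{x^0}$ in $\hat X$ (again using irreducibility of $X$ to connect underlying $X$-words and lifting), one sees this component is $\hat Z$. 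The claim that $\degree_\rec(\hat\pi_{|\hat Z^\#})=1$ then follows because $\hat Z$ contains a word $\hat W$ that is magic for $\hat\sim$ over $\hat X^\rec$ (any magic word for $\hat\sim$ restricted to the recurrent part of $\hat Z$), so Corollary~\ref{cor-Xmin} applied to $\hat Z$ gives $\degree_\rec(\hat\pi_{|\hat Z^\#})=\min\{|\cls{x}|:x\in\hat Z^\rec\}=1$, using item~(7) of Lemma~\ref{lem-good-quotient} (equivalently Theorem~\ref{thm-degree}) to see the degree cannot drop below~$1$ and is achieved.

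\textbf{Main obstacle.} The delicate step is the lifting-and-connecting argument: one must be careful that the $\hat X$-paths obtained by splicing a lift of an $X$-connecting path onto the periodic lifts actually stay in $\hat X^\#$ (not merely $\hat X$), and that the resulting sequence still sees i.o. a fixed symbol on both sides — this is where Fact~\ref{fact-lift-quotient} and the finiteness of $\{b:b\sim a\}$ (local finiteness of $\sim$) are used, exactly as in the proof that $\hat x^J\in X_N^\#$ in the proof of Theorem~\ref{thm-Bowen-quotient}. A secondary subtlety is translating "related to a magic word for $\sim$" into "sees i.o. a magic word for $\hat\sim$": one should check that the block code $\hat W$ assembled from the $W$-windows inside $\hat x$ indeed realizes the infimum defining $\degree_\rec(\hat\sim)$, which reduces to the magic-couple bookkeeping ($d$ symbols at index $I$) already set up before Lemma~\ref{prop-magic}. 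Everything else — preservation of items (1)--(4) and the inclusion half of (5') under restriction to an irreducible component — is immediate, as noted in the proof sketch in the excerpt.
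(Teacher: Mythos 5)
Your overall architecture matches the paper's (lift periodic points via the Observation and Fact~\ref{fact-special-lift}, show all sequences related to a magic word land in one component, then exhibit a magic word for $\hat\sim$ inside that component to get degree $1$), but the central mechanism is missing and the argument you substitute for it does not work. The step ``any two such lifts \dots\ lie in the same irreducible component because connecting $X$-paths can be lifted to $\hat X$-paths using Fact~\ref{fact-lift-quotient}'' is wrong on two counts. First, Fact~\ref{fact-lift-quotient} goes the other way: it produces a $\cG$-path $Q(\hat x,a)$ \emph{from} a $\cG_N$-path, not a $\cG_N$-path from a $\cG$-path. Second, no such upward lifting of connecting paths can exist in general: the example after Remark~\ref{rem-Bowen-Q} shows $\hat X=X_N$ may be reducible even when $X$ is irreducible, so irreducibility of $X$ cannot by itself force two lifts into a common component. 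The paper avoids this entirely: for two magic words $v,w$ it takes a \emph{single} periodic orbit of $X$ seeing both i.o., lifts that one orbit, and observes the lift is related to both.

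The missing key idea is the rigidity statement that makes ``related to $w$'' useful: with $(w,i)$ a magic couple and $A_w:=\{v_i:v\in\cL(X^\#),\,v\sim w\}$, one has $|A_w|=\degree_\rec(\pi)=N$, and for any $\hat x\in\hat X^\#$ related to $w$ at position $p$, Fact~\ref{fact-lift-quotient} gives $\hat x_{p+i}\subset A_w$ with $|\hat x_{p+i}|=N$, hence $\hat x_{p+i}=A_w$. Thus every related sequence passes through the \emph{single symbol} $A_w$ infinitely often in both directions, which is what pins down the irreducible component — no connecting paths needed. Your second paragraph gestures at this (``magic-couple bookkeeping'') but asserts something both stronger and unproved: that one fixed magic word $\hat W$ for $\hat\sim$, assembled from whole windows $\hat x_{[p,p+|w|)}$, occurs i.o.\ in every related $\hat x$. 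Only the symbol at the magic index is forced; the other coordinates of the window need not coincide across occurrences or across sequences. The same forcing $\hat v_i=A_w$ is also what proves $\delta_{\hat\sim}(\hat w,i)=1$ for the lifted word $\hat w\in\cL(\hat Z^\rec)$, which is needed because $\degree_\rec(\hat\pi|_{\hat Z^\#})$ is an infimum over the smaller language $\cL(\hat Z^\rec)$ and could a priori exceed $\degree_\rec(\hat\pi)=1$; invoking Corollary~\ref{cor-Xmin} or Lemma~\ref{lem-good-quotient}(7) without exhibiting such a word in $\cL(\hat Z^\rec)$ and verifying it is magic leaves that part circular.
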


\begin{proof}[Proof of the claim]
Let $(w,i)$ be a magic couple  for $\pi_{|X^\#}$ over $X^\rec$. Define the set $A_w:=\{v_i:v\in\cL(X^\#)$ s.t. $v\sim w\}$ with cardinality $|A_w|=\degree_\rec(\pi)$. Note that since $w\in\cL(X^\rec)$, there is a  periodic point $x\in X^\#$ that sees $w$. By the observation, this implies the existence of $\hat x\in\hat X^\#$ related to $w$.

Now let $\hat x\in\hat X^\#$ be related to $w$.  By Fact~\ref{fact-lift-quotient}, for all $n\in\ZZ$ $\hat x_{n}=\{z^a_{n}:a\in\hat x_0\}$ where for each $a\in\hat x_0$, $z^a=Q(\hat x,x)$. Hence, for all $a\in\hat x_0$, $z^a_p\dots z^a_{p+|w|-1}\sim w$ and  $\hat x_{p+i}\subset A_w$. Since these sets have equal cardinalities, we have: $\hat x_{p+i}=A_w$. 
Therefore, $\hat x$ belongs to the irreducible component $\hat Z_w$ of $\hat X$ containing the symbol $A_w$.

If $v$ is another magic word,  there is a periodic orbit $x\in X$ that sees i.o. $v$ and also sees i.o. $w$ ($X$ is transitive). The observation yields some $\hat x\in\hat X^\#$ which is related to both $v$ and $w$ so 
 $\hat Z_v=\hat Z_w$. Thus there is an irreducible component $\hat Z$ that contains all $\hat x\in\hat X^\#$ related to any magic word for $\pi$.

To show that $\degree_\rec(\pih|_{\hat Z^\#})=\degree_\rec(\pih)=1$, it suffices to find a magic word for $\pih$ in $\cL(\hat Z^\rec)$. Let $w$ be a magic word for $\pi$. Given a periodic $x\in X^\#$ with $w$ occuring at index $0$, the observation yields a periodic, hence word recurrent $\hat x\in\hat X^\#$ with $w_n\in\hat x_n$ for all $0\leq n<|w|$. 

Let $\hat w:=\hat x_0\dots \hat x_{|w|-1}$. Obviously $\hat w\in\cL(\hat Z^\rec)$. We check that $\hat w$ is a magic word for $\pih$.
Let  $\hat v\in\cL(\hat X^\rec)$ such that $\hat v\;\hat\sim\;\hat w$. By Fact~\ref{fact-lift-quotient}, $\hat v_n=\{v^a_n:a\in \hat v_0\}$ where $v^a\in\cL(X^\#)$   for all $0\leq n<|\hat w|$. In particular $v^a_n\sim w_n$ for all $0\leq n<|w|$. As above, it follows that $\hat v_i=\{v^a_i:a\in\hat v_0\}=A_w$. This implies that $\degree_\rec(\pih|_{\hat Z^\#})=\delta_{\hat\sim}(\hat w,i)=1$.
\end{proof}

Let $x\in X^\#$. We are going to show that $x\in\pih(\hat Z^\#)$ by finding $y\in X^\#$ Bowen equivalent to $x$ and which can be approximated by  $q(\hat x^n)$ with periodic $\hat x^n\in\hat Z^\#$. Fix a magic word $w$ for $\pi_{|X^\#}$. There are symbols $a,b$ of $X$ and integers  $m_k,n_k\geq k$ such that $x_{-m_k}=a$ and $x_{n_k}=b$ for all $k\geq1$. There is an $X$-word $u_0\dots u_{\ell+1}$, $\ell\geq1$, with $u_0=b$ and $u_{\ell+1}=a$ and containing $w$ as a subword (since $X$ is irreducible). For each $k\geq1$, let $x^k\in X^\#$ be the periodic sequence with  period $\tau_k:=n_k+m_k+\ell+1$ defined by:
  $$
    \forall i=-m_k,\dots,n_k+\ell\quad 
      x^k_{i} = \alter{ x_i &\text{ if }-m_k\leq i\leq n_k,\\
                                u_{i-n_k+1} & \text{ if }n_k\leq i<n_k+\ell.}
  $$
Note that for all $i\in\ZZ$, $\cA_i:=\{x^k_i:k\geq1\}\subset\{x_i:k\leq |i|\}$ is finite. The local finiteness of the Bowen relation implies that, for all $i\in\ZZ$, the set of symbols $\cB_i:=\{s:\exists t\in\cA_i$ s.t. $s\sim t\}$ is finite. 

Since $x^k\in X^\#$ is periodic, the observation gives  $\hat x^k\in\hat X^\#$ such that  $x^k_i\in\hat x^k_i$ for all $i\in\ZZ$.
In particular, $\hat x^k$ is related to $w$ so it belongs to $\hat Z$ by the claim. Note that  $\hat x^k_i\subset\{s:s\sim x^k_i\}$ hence $\hat x^k_i\subset\cB_i$ for all $i\in\ZZ$ and $k\geq1$. Thus  there is a point of accumulation $\hat x=\lim_n \hat x^{k(n)}\in\hat Z$ for some sequence $k(n)\uparrow\infty$. If $x_i=a$ (resp. $b$), then, for all large $k$, $\hat x^k_i\subset\{c:c\sim a$ (resp. $c\sim b$)$\}$ which is finite and independent of $i\in\ZZ$, hence $\hat x\in \hat Z^\#$. 

Let $y^k:=q(\hat x^k)$ for $k\geq1$. As $q$ is continuous, $y^{k(n)}=q(\hat x^{k(n)})$ converges to the sequence $y:=q(\hat x)$. Since $\hat x\in\hat X^\#$, we have $y\in X^\#$. For all $i\in\ZZ$, $y^k_i\in\hat x^k_i$ by construction, hence $y^k_i\sim x^k_i$.  Recalling that $x^k_i=x_i$ for all $k\geq|i|$, we get $y\BE x$. By the Bowen property:
 $$
  \pi(x)=\pi(y)=\pi(q(\hat x))=\pih(\hat x).
 $$
 Thus $\pi(X^\#)\subset\pih(\hat Z^\#)=\pi(q(\hat Z^\#))\subset\pi(X^\#)$, so $\pih(\hat Z^\#)=\pi(X^\#)$, yielding item (5'). Since $\deg_\rec(\pi_{|\hat Z^\#})=1$,  Theorem~\ref{thm-degree} yields item (6'). 
\end{proof}

\begin{remark}
The periodic approximation argument in the last part of the proof of Theorem~\ref{thm-large-injectivityset} is partly inspired by some geometric construction of \cite{BCS}.
\end{remark}

\bigbreak

\subsection{Preparations}
We turn to the proof of the Main Theorem. Let $\pi:X^\#\to Y$ be an excellent  semiconjugacy for some Bowen relation $\sim$. We are going to build an injective coding of the image $\pi(X^\#)$.
We start with the following simple fact about partially ordered sets. In this paper $\NN$ is the set of nonnegative integers.

\begin{fact}\label{fact-reorder}
Let $(\cO,\preceq)$ be a countable (possibly finite) set together with a partial order $\preceq$. There is a bijection $\sigma:\{n\in\NN:n<|\cO|\}\to\cO$ which is nondecreasing, i.e., 
 \begin{equation}\label{eq-orders}
   \forall i,j: \quad \sigma(i)\preceq\sigma(j)\implies i\leq j
 \end{equation}
if and only if all initial segments  $\{b\in\cO:b\preceq a\}$, $a\in\cO$, are finite. 
\end{fact}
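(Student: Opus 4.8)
The plan is to treat the two implications separately. Necessity is immediate: if $\sigma$ is a nondecreasing bijection as in \eqref{eq-orders} and $a=\sigma(j)\in\cO$, then every $b\preceq a$ is of the form $\sigma(i)$ with $\sigma(i)\preceq\sigma(j)$, hence $i\leq j$, so $\{b\in\cO:b\preceq a\}\subset\sigma(\{0,1,\dots,j\})$ is finite. For sufficiency, assume every initial segment of $(\cO,\preceq)$ is finite; I would construct $\sigma$ by a greedy ``linear extension'' recursion, biasing the choice at each stage so as to also force surjectivity.

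Concretely: since $\cO$ is countable, fix a bijection $m\mapsto a_m$ from $\{m\in\NN:m<|\cO|\}$ onto $\cO$. Define $\sigma$ recursively. Given distinct values $\sigma(0),\dots,\sigma(n-1)$ with $n<|\cO|$, let $m(n)$ be the least index with $a_{m(n)}\notin\{\sigma(0),\dots,\sigma(n-1)\}$, set $S_n:=\{b\in\cO:b\preceq a_{m(n)}\}\setminus\{\sigma(0),\dots,\sigma(n-1)\}$, and let $\sigma(n)$ be a $\preceq$-minimal element of $S_n$; this is legitimate because $S_n$ is finite (by the hypothesis) and nonempty (it contains $a_{m(n)}$). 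By construction the $\sigma(n)$ are pairwise distinct.

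Three verifications then finish the proof, and I expect only the last to require care. (a) $\sigma(n)$ is $\preceq$-minimal not merely in $S_n$ but in the whole set $\cO\setminus\{\sigma(0),\dots,\sigma(n-1)\}$ of unused elements: if some unused $d$ had $d\prec\sigma(n)$, then $d\preceq\sigma(n)\preceq a_{m(n)}$ would put $d\in S_n$, contradicting minimality there. (b) $\sigma$ is nondecreasing: if $\sigma(i)\preceq\sigma(j)$ with $i\neq j$, then $\sigma(i)\prec\sigma(j)$; were $i>j$, then $\sigma(i)$ would be unused at stage $j$, contradicting (a), so $i<j$. (c) $\sigma$ is onto: the index $m(n)$ is nondecreasing in $n$ (once $a_0,\dots,a_{m-1}$ are used they stay used), and each value $m$ is attained at only finitely many stages, since every stage with $m(n)=m$ consumes a new element of the finite set $\{b\preceq a_m\}$; hence $m(n)\to\infty$ when $\cO$ is infinite, so every $a_m$ is eventually chosen, while if $\cO$ is finite the recursion simply exhausts it after $|\cO|$ steps. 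The genuinely delicate point is (c): a naive greedy minimal-element choice can starve an element forever --- take $\cO=\NN$ with its usual order and never pick $0$ --- and restricting the choice to $S_n$, i.e. steering attention toward the unused element of smallest enumeration index, is precisely the fix; items (a), (b) and the finiteness of $S_n$ are routine.
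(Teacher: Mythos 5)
Your proof is correct and follows essentially the same strategy as the paper's: a greedy linear extension steered by a fixed enumeration of $\cO$ so that surjectivity is forced, with the observation that choosing $\preceq$-minimal unused elements yields the nondecreasing property. The only difference is organizational --- the paper appends the whole finite initial segment $\{b:b\preceq s(n)\}$ as a batch at stage $n$ (ordered via the finite case), while you extract one minimal element at a time from the initial segment of the least-enumerated unused element; both devices serve the same anti-starvation purpose, and your verifications (a)--(c) are sound.
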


\begin{proof}
If $\sigma:\NN\to\cO$ is a bijection satisfying eq.~\eqref{eq-orders}, then any initial segment $\{b\in\cO:b\preceq\sigma(i)\}$ is finite as a subset of $\sigma(\{0,1,\dots,i\})$. We now assume that all initial segments are finite and proceed to build the bijection $\sigma$.

If $\cO$ is finite, then one can define  $\sigma:\{0,\dots,n-1\}\to\cO$ inductively by choosing, for each $0\leq k<n$,  $\sigma(k)$ to be some minimal element $\alpha$ among $\cO\setminus\sigma(\{0,\dots,k-1\})$, i.e., such that:
  $$
    \forall \beta\in\cO\setminus\sigma(\{0,\dots,k-1\})\quad
        \beta\preceq \alpha\implies\beta=\alpha.
    $$
We assume now that $\cO$ is infinite so there is a bijection $s:\NN\to\cO$. We define integers $N_0<N_1<\dots$ and $\sigma|\{0,\dots,N_n-1\}$ inductively by setting $N_0=0$ and, for each $n\geq0$,
 \begin{enumerate}
  \item applying the finite case, enumerate  $\{b\in\cO:b\preceq s(n)\}\setminus\sigma(\{0,\dots,N_n-1\})$ as $\{b_{n,1},\dots,b_{n,\ell_n}\}$ where $i\mapsto b_{n,i}$ is injective and non-decreasing;
  \item set $N_{n+1}:=N_n+\ell_n$ and $\sigma(N_n+i)=b_i$ for $i=0,\dots,\ell_n-1$.
 \end{enumerate}
It is easy to check that $\sigma$ is a nondecreasing bijection.
\end{proof}

\newcommand\cW{{\mathcal W}}

\medbreak

We will apply the following elementary construction to an enumeration of the magic words for the relation $\sim$ in $X^\#$ over $X^\rec$.

\begin{lemma}\label{lem-magic-subset}
Let $X$ be a Markov shift. Let  $\cW:=(W^j)_{1\leq j<J}$ ($1<J\leq\infty$) be an enumeration of $X$-words.  Then there is an injective one-block code $p:S\to X$ defined on a Markov shift $S$  whose image $p(S)$ is $X_\cW\setminus N$ for some null set $N$ and
 $$
   X_\cW:=\{x\in X:\text{there is $w\in\cW$ such that $x$ sees i.o. $w$}\}\subset X^\#.
 $$
\end{lemma}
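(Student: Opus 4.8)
The goal is to realize the set $X_\cW$ of sequences seeing some word from the list $\cW$ infinitely often as the (near-)image of an injective one-block code from a Markov shift. The plan is to introduce a new alphabet whose letters are pairs $(a,j)$ where $a\in\cA$ is a letter of $X$ and $j$ is a ``memory index'' recording the least $1\leq j<J$ such that $W^j$ is ``still expected'' --- more precisely, $j$ records how far we are, together with how recently we saw some word $W^i$ with $i\le$ (current threshold), in a way that forces every sequence in the new shift to see some $W^j$ infinitely often in both directions. Concretely I would let $S$ be the Markov shift on an alphabet $\cA\times\ZZ_{\ge1}\times\{0,1,\dots\}$ (or a countable refinement thereof) where the second coordinate is a monotone ``level'' that is allowed to increase only after the word $W^{\text{level}}$ has just been completed, and transitions are constrained so that the level cannot be eventually constant unless the corresponding word recurs. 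The one-block code $p$ is the projection $(a,\dots)\mapsto a$.

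First I would set up the graph describing $S$ so that $p$ is well-defined as a one-block code (project to the first coordinate) and so that $p\circ S=X\circ p$; this is automatic once the underlying graph on $\cA\times(\text{extra data})$ only has arrows $(a,\dots)\to(b,\dots)$ when $a\stackrel{\cG}\to b$ in the graph $\cG$ describing $X$. Second, I would check $p$ is injective: the extra coordinates must be \emph{determined} by the $\cA$-sequence together with the requirement of lying in $S$. This is the delicate bookkeeping step --- one wants the ``level'' and auxiliary counters to be reconstructible from $(x_n)_{n\in\ZZ}$ alone. The standard device is to make the level at time $n$ equal to a function of the pattern of occurrences of the $W^j$ to the left of $n$ that is eventually forced; a clean way is to only allow the level to equal $j$ strictly between two occurrences of $W^j$, and to force increments, so that membership in $S$ pins down the entire decoration. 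Third, I would verify $p(S)\subset X_\cW$: any $s\in S$ has a level sequence that is non-decreasing hence either stabilizes at some $j<J$ --- in which case the transition rules force $W^j$ to occur infinitely often to the right (and a symmetric rule forces it to the left), so $x:=p(s)$ sees $W^j$ i.o.\ --- or tends to infinity, which I would rule out by construction (or handle by requiring each increment to be preceded by seeing the relevant word, so infinitely many increments again gives infinitely many word-occurrences, and symmetrically on the left). Fourth, I would prove $X_\cW\setminus p(S)$ is null: given $x\in X_\cW^\rec$ (a full subset of $X_\cW$ by Poincaré recurrence, since $X_\cW\subset X^\#$ and word-recurrence is generic), $x$ sees some $W^j$ i.o.\ in \emph{both} directions, so one can canonically build the decoration --- choose the level to be the largest $j'\le j$ such that $W^{j'}$ recurs on both sides and track it via the occurrences --- producing a preimage; thus $X_\cW^\rec\subset p(S)$ and the complement is contained in the null set $X_\cW\setminus X_\cW^\rec$. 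Finally I would note $X_\cW\subset X^\#$: a letter appearing in $x\in X_\cW$ forces (via seeing $W^j$ i.o.) some letter of $X$ to recur on each side, which is exactly the regularity condition.

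The main obstacle I anticipate is the injectivity bookkeeping in the second and fourth steps: one must design the transition rules for the decoration so that (i) on \emph{every} bi-infinite path the decoration is uniquely determined by the first coordinates, yet (ii) \emph{some} admissible decoration exists over every point of $X_\cW$ up to a null set. These two requirements pull in opposite directions --- too rigid and some points of $X_\cW^\rec$ have no preimage; too loose and $p$ fails to be injective --- so the right formulation of the ``level'' dynamics (monotone, incremented only at word completions, reset-free, and symmetric under time reversal on the recurrent part) is where the real work lies. Everything else (the one-block / semiconjugacy properties, the Markov structure of $S$, and the nullity estimate via word-recurrence) is routine once that combinatorial gadget is in place. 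I would model the gadget on the standard ``tower'' or ``memory'' constructions used to recode a countable union of cylinders, but carried out on both sides of the origin simultaneously, which is the feature special to the two-sided, regular-part setting here.
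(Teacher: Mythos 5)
Your high-level strategy matches the paper's in outline: stratify $X_\cW$ according to a designated word from the list that recurs, recode each stratum by a Markov shift carrying extra memory, project back by a one-block code, and dispose of the exceptional set via word-recurrence (the inclusion $X_\cW\subset X^\#$ and the nullity of $X_\cW\setminus X_\cW^{\rm rec}$ are handled correctly). But there is a genuine gap exactly where you write that ``the real work lies'': the combinatorial gadget is never constructed, and the version you sketch fails on two concrete points. First, the stratum one actually needs is $X_j=\{x:\ x$ sees $W^j$ i.o.\ and \emph{no} $W^i$ with $i<j$ occurs anywhere in $x\}$; this involves a global exclusion, and an occurrence of an excluded word $W^i$ can straddle many occurrences of $W^j$, so no ``level'' updated by local transition rules can detect it. The paper resolves this by first reordering the list so that subwords precede the words containing them (Fact~\ref{fact-reorder}); after that, an excluded occurrence cannot contain an occurrence of $W^j$ and is therefore confined to a single first-return block to $W^j$, which is what makes the exclusion checkable on finite loops. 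Your sketch omits this reordering, without which the Markov (local) description of $X_j$ breaks down. Second, your fallback for a level tending to $+\infty$ (``each increment is preceded by seeing the relevant word, so infinitely many increments give infinitely many word-occurrences'') is incorrect: such a path projects to a point seeing each $W^j$ at least once but possibly no single word infinitely often, hence a point of $p(S)$ outside $X_\cW$, violating the required inclusion $p(S)\subset X_\cW$.

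For comparison, the paper avoids level bookkeeping altogether: after the subword reordering it codes each $X_i$ by the loop graph at the base vertex $W^i$ of the $|W^i|$-th higher block presentation of $X$, keeping only the ``good'' first-return loops (those whose extension by $W^i$ avoids $W^1,\dots,W^{i-1}$); this gives a topological conjugacy $p^i:S_i\to X_i$ by a one-block code, and injectivity of the total map is immediate from the pairwise disjointness of the $X_i$. To salvage your decoration approach you would need to (i) perform the subword reordering first and (ii) fix a single level per connected piece rather than letting it increase, at which point you have essentially rediscovered the paper's loop-graph construction.
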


Recall that, given some word $W\in\cL(X)$, $X_W$ is the set of sequences that see i.o. $W$ (see p.~\pageref{def-sees-io}).

\begin{proof}
Since the set of subwords of a given word is finite, Fact \ref{fact-reorder} allows us to assume (maybe after a permutation) that:
 \begin{equation}\label{eq-subord}
  W^i \text{ subword of }W^j \implies i\leq j
 \end{equation}

For $ 1\leq j< J$, consider the following subset of $X_\cW$:
 $$
   X_j:=\{x\in X: x\text{ sees i.o.  $W^j$, none of $W^1,\dots,W^{j-1}$ occurs in }x\}.
 $$
The injective code we are going to build will have image $\bigcup_{1\leq j<J} X_j$.
The sets $X_j$ are pairwise disjoint. Each sequence in  $X_\cW\setminus \bigcup_{1\leq j<J} X_j$  contains some $\cW$-word that does not occur i.o., hence this difference is null.

Fix $1\leq i<J$ such that $X_i\ne\emptyset$. Let $N:=|W^i|$. We perform some standard graph constructions. First, consider the $N$th higher block presentation $X^{[N]}$ of $X$ (see, e.g., \cite[I.4.1]{LindMarcus-book}) defined by the graph $\cG^1$ with:
 $$
    \text{vertices: }(x_0,\dots,x_{N-1}),\quad
    \text{arrows: } (x_0,\dots,x_{N-1})\to(x_1,\dots,x_N) \qquad (x\in X).
 $$
There is a topological conjugacy $X^{[N]}\to X$ defined by the one-block code
 $
    (x_0\dots x_{N-1}) \mapsto x_0
 $
with inverse: $(x_n)_{n\in\ZZ}\mapsto (x_{[n,n+N-1]})_{n\in\ZZ}$.

Let $\cG^2$  be the loop graph at the base vertex $W^i$ in $\cG^1$, defined as follows (see, e.g., \cite{GurevicSavchenko-1998,BoyleBuzzi-2017}). The \emph{first return loops} at $W^i$ are the finite sequences $(y_0,y_1,\dots,y_{k-1})$ where each $y_i$ is an $X$-word of length $N$,  $y_0=W^i$,  $y_0\to y_1\to\dots\to y_{k-1}\to W^i$ on $\cG^1$, and  $y_\ell\ne W^i$ for any $1\leq \ell\leq k-1$. Now the loop graph $\cG^2$ is defined by taking as vertices the couples $(v,\ell)$ where $v$ is a first return loop at $W^i$ and $0\leq \ell<|v|$, and as arrows:\footnote{Contrary to usual practice, we do not identify all $(v,0)$ vertices with a single distinguished vertex.}
 $$\begin{aligned}
    &(v,\ell)\to (v,\ell+1) \text{ if $v$ is a first return loop and }1\leq \ell+1<|v|\\
    &(v,|v|-1)\to(w,0) \text{ if $v,w$ are first return loops}.
 \end{aligned}$$
The corresponding shift is mapped into $X$ by the one-block code $(v,\ell)\mapsto v^\ell_0$ (i.e., the first symbol of the word $v^\ell$).

 We define $\cG^3$ by keeping from $\cG^2$ only the vertices $(v,\ell)$ where $v$ is a first return loop $(y_0,\dots,y_{k-1})$ that is \emph{good}, i.e., whose \emph{extensions}:
  $$
     (y_0,\dots,y_{k-1},W^i)
  $$
  map to an $X$-word of length $k+|W^i|$ that does not contain any of the words  $W^1,\dots,W^{i-1}$.
  
Let $S_i$ be the Markov shift defined by this loop graph $\cG^3$ and define $p^i:S_i\to X_i$ to be the restriction of the previous map.

\begin{claim} The map $p^i:S_i\to X_i$ is a topological conjugacy defined by a one-block code.
\end{claim}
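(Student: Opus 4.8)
The plan is to show that the one-block code $p^i:S_i\to X_i$ is a bijection, since a bijective one-block code between symbolic systems is automatically a topological conjugacy (it is continuous, shift-commuting by construction, and its inverse is continuous because the symbolic metric is determined by finite windows, which follows once injectivity is quantitative in the sense that a symbol of $S_i$ is determined by a bounded window of its image --- I would check this as part of surjectivity/injectivity). So the work is to prove $p^i(S_i)=X_i$ and that $p^i$ is injective.

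\medbreak
\noindent\textbf{Surjectivity.} Take $x\in X_i$. By definition $x$ sees $W^i$ i.o. in both directions and contains none of $W^1,\dots,W^{i-1}$. Passing to the higher block presentation, $x$ corresponds to a sequence $\bar x$ over $N$-blocks in which the vertex $W^i$ appears i.o. in both directions. Hence $\bar x$ decomposes canonically as a bi-infinite concatenation of first return loops $v^{(m)}$ at $W^i$, $m\in\ZZ$: each $v^{(m)}$ is the block-string strictly between two consecutive occurrences of the vertex $W^i$, prefixed by one copy of $W^i$. Each extension $(v^{(m)},W^i)$ maps to a subword of $x$ (the stretch from one occurrence of $W^i$ through the next, inclusive), so it contains none of $W^1,\dots,W^{i-1}$; thus every $v^{(m)}$ is \emph{good}. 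Reading off the position inside the current loop gives a sequence $s\in S_i$ with $p^i(s)=x$. Conversely, any $s\in S_i$ determines, by concatenating extensions of its good loops, a block-sequence visiting $W^i$ i.o. and avoiding $W^1,\dots,W^{i-1}$, whose image under the block-to-symbol map is an element of $X_i$; so $p^i(S_i)\subset X_i$.

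\medbreak
\noindent\textbf{Injectivity.} Suppose $p^i(s)=p^i(s')=x$. The image $x$ (lifted to $N$-blocks) has a unique set of occurrences of the vertex $W^i$, hence a unique decomposition into first return loops; this forces $s$ and $s'$ to pass through the same loops in the same order, and the ``position within the loop'' coordinate is recovered from which occurrence of $W^i$ one last passed --- equivalently, from a bounded window around each coordinate. Here the non-standard convention of \emph{not} identifying the $(v,0)$ vertices is essential: it is what makes the current loop $v$ --- and not merely the current block --- part of the state, so that $s$ is genuinely recovered from $x$. This also shows the inverse map is a sliding-block code with bounded window (window size controlled by the maximal relevant loop length, which is finite once we restrict to the neighborhood needed to locate the surrounding $W^i$'s), giving continuity of $(p^i)^{-1}$.

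\medbreak
I expect the main obstacle to be bookkeeping rather than conceptual: carefully matching the three successive graph constructions $\cG^1\to\cG^2\to\cG^3$ so that the canonical loop decomposition of a point in $X_i$ really lands in the \emph{good}-loop subgraph $\cG^3$, and conversely that every path in $\cG^3$ reassembles to a point of $X_i$ (in particular that goodness of each individual loop --- avoidance of $W^1,\dots,W^{i-1}$ in each extension $(y_0,\dots,y_{k-1},W^i)$ --- is equivalent to global avoidance in the concatenation; this uses exactly that any occurrence of some $W^j$, $j<i$, in the full sequence must straddle at most one full loop's worth plus boundary, so it is visible inside some extension). Once this local-to-global equivalence for subword avoidance is pinned down, bijectivity and hence the conjugacy statement follow immediately.
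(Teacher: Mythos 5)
Your route is the same as the paper's: decompose $x\in X_i$ into first-return loops at $W^i$, match goodness of each loop against global avoidance of $W^1,\dots,W^{i-1}$, and invert by reading off the nearest occurrences of $W^i$ around each coordinate (this gives continuity of the inverse; the window is not uniformly bounded over $X_i$, only finite at each point, which is all that continuity requires). However, the step you defer as ``bookkeeping'' is the only substantive step of the claim, and your sketch of it omits its key hypothesis. To show that a concatenation of good loops avoids $W^1,\dots,W^{i-1}$ globally, you must rule out an occurrence of some $W^j$, $j<i$, spanning several loops; this is exactly where the preliminary reordering \eqref{eq-subord} enters, and you never invoke it. Precisely: by \eqref{eq-subord}, $W^i$ is not a subword of $W^j$ for $j<i$, so an occurrence of $W^j$ cannot contain a complete occurrence of $W^i$; letting $t$ be the last occurrence of $W^i$ starting at or before the start of the $W^j$-occurrence and $t'$ the next one, the $W^j$-occurrence must end before $t'+|W^i|$ (otherwise it would swallow the copy of $W^i$ at $t'$), hence it sits inside the image of the single extended loop based at $t$ and contradicts goodness. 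Without \eqref{eq-subord} the local-to-global equivalence is simply false (a word $W^j$ containing two disjoint copies of $W^i$ could occur across two consecutive good loops), and your phrase ``straddle at most one full loop's worth plus boundary'' would, if the occurrence were allowed to contain one full copy of $W^i$, let it meet two extensions and escape every individual goodness test. Once you state and use \eqref{eq-subord} at this point, your argument coincides with the paper's.
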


\begin{proof}[Proof of the claim]
It is obvious that $p^i$ is a one-block code. We have to check that it defines a bijection and that its inverse is continuous.

Consider some $x\in X_i$.  It can be lifted to a concatenation of  first return loops since $x$ sees i.o. $W^i$ . These first return loops must be good since $x$ avoids $W^1,\dots,W^{i-1}$. Thus $x$ belongs to the image of $\cG^3$. 
Conversely, let $x\in X$ be the image of some $\hat x$ on $\cG^3$, i.e., an infinite concatenation of good first return loops.  Assume by contradiction that  some $W^j$, $j<i$ occurs in $x$. 
By \eqref{eq-subord}, this occurrence may overlap but cannot contain any occurrence of $W^i$.  Thus $W^j$ occurs in the image of some extended first return loop, so the first return loop is not good. This contradicts the definition of $\cG^3$, proving that $p^i(S_i)=X_i$.

Note that $p^i$ is invertible with inverse defined by:
 $$
    \forall x\in X_i\quad p^i(\hat x)=x\!\!\implies\!\! \hat x_0=((x_{[j,j+N-1]})_{-n\leq j< m}),-n)
 $$
with $n=\max\{k\leq0: x_{[k,k+N-1]}=W^i\}$ and $m=\min\{k>0:x_{[k,k+N-1]}=W^i\}$. This inverse is continuous. The claim is proved.
\end{proof}

To conclude the proof of the lemma, let $S$ be the Markov shift $\bigsqcup_{1\leq j<J} S^j$ (considering the alphabets to be pairwise disjoint) and define the map $p:S\to \bigcup_{1\leq i<J} X_i$ by $p(x)=p^{i}(x)$ if $x\in S^i$. This is well-defined. Obviously $p$ is a one-block code. As the sets $X^i$ are disjoint and each $p^i$ is injective, so is $p$. Remark that $p(S)=\bigcup_{1\leq j<J} X_j$ and that  this union coincides with $X_\cW$ up to a null set. 
\end{proof}

\begin{remark}
The proof of the above lemma does not provide a locally compact Markov shift $S$, even if $X$ is compact.
\end{remark}

\subsection{Proof of the Main Theorem}
Let $\piS:X^\#\to Y$ be an excellent semiconjugacy with a Bowen relation $\sim$.
We are going to divide the image $\pi(X^\#)$ according to the number of preimages and then successively reduce each of these numbers to one (ignoring null sets). We assume that $\pi(X^\#)$ is not null as otherwise there is nothing to show.

\medbreak

Let $(\Delta(i))_{1\leq i<I}$ with $1<I\leq\infty$ be the increasing enumeration of the degree spectrum $\Delta(\pi)$ (see Def.~\ref{def-deg-spectrum}).
The corresponding \emph{degree partition} of $\pi(X^\#)$ is:
 $$
   (Y^i)_{1\leq i<I} \text{ with } Y^i:=\{y\in Y : \Size{\pi^{-1}(y)}=\Delta(i)\}.
 $$

We are going to define  semiconjugacies $\pi_i:Z_i^\#\to Y$ with Bowen relations~$\stackrel{i}\sim$ such that, setting $\widetilde Z_i:=\{x\in Z_i^\#:x$ sees i.o. some magic word for $\pi_i\}$ we have:
 \begin{enumerate}[\qquad(a)]
  \item $\pi_i:Z_i^\#\to Y$ is an excellent semiconjugacy with $\degree_\rec(\pi_i)=1$;
  \item $\pi_i=\pi\circ q_i|_{Z_i^\#}$ for some $1$-Lipschitz map $q_i:Z_i\to X$;
  \item for any $y\in\pi_i(\widetilde Z_i)$, $|\pi_i^{-1}(y)|=1$;
  \item $\pi_i(\widetilde Z_i)=Y_i$ up to a null set;
  \item the degree partition of $\pi_i$ is $(Y^i_j)_{i\leq j<I}$ with $Y^i_j:=\{y\in Y:|\pi_i^{-1}(y)\cap Z_i^\#|=\Delta_i(j)\}$ equal to $Y_j$ up to a null set and $(\Delta_i(j))_{i\leq j<I}$ is the increasing enumeration of the degree spectrum;
 \end{enumerate}
 
For $i=1$, we let $(\pi_1:Z_1^\#\to Y,\stackrel{1}\sim)$ be the Bowen quotient of the semiconjugacy $(\piS,\sim)$ with order $\Delta(1)=\min\Delta(\pi)$. Since $\pi:X^\#\to Y$ is an excellent semiconjugacy, Lemma~\ref{lem-good-quotient} shows that this is well-defined  and that the above items (a)-(e) hold with $\Delta_1(j)=\binom{\Delta(j)}{\Delta(1)}$.

Let $1<i<I$ and assume that  $(Z_{j},\piS_{j},\stackrel{j}\sim,q_{j})$ have been defined with these properties  for all $1\leq j<i$. Let $(\pi_i:Z_i^\#\to Y,\stackrel{i}\sim)$ be the Bowen quotient of $(\pi_{i-1},\stackrel{i-1}\sim)$ of order $N_{i-1}:=\Delta_{i-1}(i)$ (this last set is nonempty by item (e) since $i<I$). Lemma~\ref{lem-good-quotient} shows that this is well-defined  and that the above items (a)-(d) hold with $q_i=q_{i-1}\circ q$ where $q$ is given by Lemma~\ref{lem-good-quotient}, item~(3).

We turn to item (e). The item (6) of Lemma~\ref{lem-good-quotient} shows that $\Delta(\pi_i)=\{\Delta_i(j):i\leq j<I\}$ with $\Delta_i(j):=\binom{\Delta_{i-1}(j)}{\Delta_{i-1}(i)}$ and, for all $i\leq j<I$, up to a null set:
 $$
    \{y\in Y:|\pi_i^{-1}(y)\cap Z_i^\#|=\Delta_i(j)\} =  
     \{y\in Y:|\pi_{i-1}^{-1}(y)\cap Z_{i-1}^\#|=\Delta_{i-1}(j)\} 
   $$
 so $Y^i_j=Y^{i-1}_j=Y_j$ up to null sets, proving~(e).

\begin{claim}\label{claim-recode}
For any $1\leq i<I$, 
there are a Markov shift $S_{i}$ and a one-block code $p_{i}:S_{i}\to p_i(S_i)$ such that: $p_i(S_i)\subset \widetilde Z_i$ with the difference a null set, $\pi_i\circ p_i(S_i)=Y_i$ up to a null set, and $\pi_{i}\circ p_{i}$ is injective.
\end{claim}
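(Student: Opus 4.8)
The plan is to apply Lemma~\ref{lem-magic-subset} to the semiconjugacy $\pi_i:Z_i^\#\to Y$ together with its Bowen relation $\stackrel{i}\sim$. Concretely, I would let $\cW_i:=(W^j)_{1\leq j<J}$ be an enumeration of all the magic words for $\stackrel{i}\sim$ over $Z_i^\rec$. (By the remarks after the degree definition in Section~\ref{s-magic}, since $\stackrel{i}\sim$ is locally finite, $\deg_\rec(\stackrel{i}\sim)$ is finite and magic words exist, so $\cW_i$ is non-empty; it is countable because the alphabet of $Z_i$ is.) Then Lemma~\ref{lem-magic-subset} produces a Markov shift $S_i$ and an injective one-block code $p_i:S_i\to Z_i$ whose image is $(Z_i)_{\cW_i}\setminus N$ for some null set $N$, where $(Z_i)_{\cW_i}$ is exactly the set of sequences seeing i.o. some magic word, i.e., $(Z_i)_{\cW_i}=\widetilde Z_i$. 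This immediately gives $p_i(S_i)\subset\widetilde Z_i$ with null difference.

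Next I would verify the remaining two assertions. For injectivity of $\pi_i\circ p_i$: every $x\in S_i$ maps under $p_i$ to a point of $\widetilde Z_i$, that is, a sequence seeing i.o. some magic word $W$ for $\stackrel{i}\sim$; by Lemma~\ref{lem-magic-injec} (applied with the hypothesis that Bowen classes are finite, which holds since $\pi_i$ is finite-to-one and $\stackrel{i}\sim$ is locally finite so $\deg_\sim(W)=\deg_\rec(\pi_i)=1$ by item (a)), such a sequence is Bowen-equivalent only to itself, hence $\pi_i^{-1}(\pi_i(p_i(x)))=\{p_i(x)\}$. Combined with the injectivity of $p_i$ itself, this shows $\pi_i\circ p_i$ is injective. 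For the image: by item (d), $\pi_i(\widetilde Z_i)=Y_i$ up to a null set, and since $p_i(S_i)$ differs from $\widetilde Z_i$ by a null set $N$, Lemma~\ref{lem-null-sets} (the part stating that the image of a null set under a finite-to-one map is null) gives $\pi_i(p_i(S_i))=\pi_i(\widetilde Z_i)\setminus\pi_i(N)=Y_i$ up to a null set.

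The only genuinely delicate point is the identification $(Z_i)_{\cW_i}=\widetilde Z_i$ and the null-set bookkeeping: one must be careful that ``sees i.o. some magic word over $Z_i^\rec$'' (the definition of $\widetilde Z_i$) agrees with ``sees i.o. some word in the chosen enumeration $\cW_i$'' (the definition of $(Z_i)_{\cW_i}$), which is true by construction of $\cW_i$, and that the three relevant null sets — the one from Lemma~\ref{lem-magic-subset}, the one from item (d), and the one obtained by pushing the former forward via Lemma~\ref{lem-null-sets} — all remain null. None of these is hard, but they are where the care is needed. Everything else is a direct quotation of Lemma~\ref{lem-magic-subset}, Lemma~\ref{lem-magic-injec}, item (a), and item (d).
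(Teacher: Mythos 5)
Your proposal is correct and follows essentially the same route as the paper: the paper's proof also applies Lemma~\ref{lem-magic-subset} to an enumeration of the magic words to get the injective one-block code $p_i$ onto $\widetilde Z_i$ up to a null set, then invokes item (d) together with Lemma~\ref{lem-null-sets} for the image, and item (c) (whose content is exactly your Lemma~\ref{lem-magic-injec} argument) for injectivity of $\pi_i\circ p_i$.
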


\begin{proof}[Proof of the claim]
Lemma~\ref{lem-magic-subset} provides  an injective one-block code $p_i$ of some Markov shift $S_i$ into $\tilde Z_i$ with  $p_i(S_i)\subset\widetilde Z_i$ with the difference a null set. 

By item (d) above and Lemma~\ref{lem-null-sets}, $\pi_i\circ p_i(S_i)=Y_i$ up to a null set.
By item (c), $\pi_i\circ p_i$ is injective.
\end{proof}

To conclude, let $S$ be the disjoint union $\bigsqcup_{1\leq i<J} S^i$ of the one-block codes from the previous claim (we can always recode to ensure this disjointness). It is a Markov shift. Define a semiconjugacy $p$ on $S$  by:
 $$
   p|S^i=\pi_i\circ p_i.
 $$ 

Note that $p|S^i=\pi\circ q_i\circ p_i$ where $p_i$ is a one-block code and $q_i$ is $1$-Lipschitz. Thus $q_i\circ p_i$ is $1$-Lipschitz. The image of $p$ contains $\bigcup_{1\leq i<J} Y^i$ up to a null set, hence $\pi(X^\#)$ up to a null set. To conclude the proof of the Main Theorem, it suffices to see that the images $p(S_i)\subset \pi_i(\widetilde Z_i)$, $1\leq i<I$,  are pairwise disjoint. We have:

\begin{claim}\label{claim-disjoint}
For any $1\leq j<i<I$, $\pi_i(Z_i^\#)\cap\pi_j(\widetilde Z_j)=\emptyset$. In particular, the images $\pi_i(\widetilde Z_i)$, $1\leq i<I$, are pairwise disjoint.
\end{claim}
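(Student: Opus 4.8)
The plan is to compare, for $j<i$, the \emph{exact} cardinality of the $\pi_j$-fibre over the two sets: every point of $\pi_j(\widetilde Z_j)$ has exactly one $\pi_j$-preimage, while every point of $\pi_i(Z_i^\#)$ has at least two, which forces the two sets to be disjoint.

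First I would record, from item~(c) above, that $|\pi_j^{-1}(y)|=1$ for every $y\in\pi_j(\widetilde Z_j)$. (Concretely: if $x\in\widetilde Z_j$ maps to $y$, then $x$ sees i.o.\ a magic word $W$ for $\pi_j$, which has degree $\deg_\rec(\pi_j)=1$ by item~(a); since $\pi_j$ is finite-to-one its Bowen classes are finite, so Lemma~\ref{lem-magic-injec} gives $\cls{x}=\{x\}$, i.e.\ $\pi_j^{-1}(y)=\{x\}$ by the Bowen property.)

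Next I would descend the chain of Bowen quotients producing $\pi_i$ from $\pi_j$. For each $k$ with $j\le k<i$, the semiconjugacy $\pi_{k+1}$ is the Bowen quotient of $\pi_k$ of order $N_k=\Delta_k(k+1)$, so item~(5) of Lemma~\ref{lem-good-quotient} gives
\[
  \pi_{k+1}(Z_{k+1}^\#)\ \subset\ \{\, y\in\pi_k(Z_k^\#):|\pi_k^{-1}(y)|\ge N_k\,\}\ \subset\ \pi_k(Z_k^\#) .
\]
Composing these inclusions from $k=i-1$ down to $k=j$ yields $\pi_i(Z_i^\#)\subset\{\,y:|\pi_j^{-1}(y)|\ge N_j\,\}$. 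By item~(1) of Lemma~\ref{lem-good-quotient}, $\min\Delta(\pi_j)=\deg_\rec(\pi_j)=1$, hence $\Delta_j(j)=1$ and so $N_j=\Delta_j(j+1)\ge2$. Thus every $y\in\pi_i(Z_i^\#)$ has $|\pi_j^{-1}(y)|\ge2$, which contradicts the first step whenever $y\in\pi_j(\widetilde Z_j)$; this proves $\pi_i(Z_i^\#)\cap\pi_j(\widetilde Z_j)=\emptyset$. The ``in particular'' is immediate since $\widetilde Z_i\subset Z_i^\#$: for $i'<i$ one gets $\pi_{i'}(\widetilde Z_{i'})\cap\pi_i(\widetilde Z_i)\subset\pi_{i'}(\widetilde Z_{i'})\cap\pi_i(Z_i^\#)=\emptyset$.

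I do not anticipate a real obstacle here; the one delicate point is that the whole argument must stay at the level of honest set inclusions (Lemma~\ref{lem-good-quotient}(5) and item~(c) being \emph{exact} statements about fibre sizes), since the analogous ``up to a null set'' statements used elsewhere in the construction would not give a literal disjointness conclusion.
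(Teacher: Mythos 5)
Your proof is correct and follows essentially the same route as the paper: show that every $y\in\pi_i(Z_i^\#)$ has at least two $\pi_j$-preimages (via the exact inclusion in Lemma~\ref{lem-good-quotient}(5) applied down the chain of quotients), while every $y\in\pi_j(\widetilde Z_j)$ has exactly one (item~(c)/Lemma~\ref{lem-magic-injec}). The only cosmetic difference is that the paper's induction tracks the precise lower bound $\Delta_j(i)$ whereas you only keep the bound $N_j\geq 2$, which suffices; your closing remark that the argument must use the exact inclusions rather than the up-to-null-set versions is exactly the right point of care.
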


 To prove this claim, note that $y\in\pi_i(Z_i^\#)$ implies that $|\pi_{i-1}^{-1}(y)\cap Z_{i-1}^\#|\geq\Delta_{i-1}(i)>1$ and thus, by induction, $|\pi_{j}^{-1}(y)\cap Z_{j}^\#|\geq\Delta_{j}(i)>1$.
 However, $y\in\pi_j(\widetilde Z_j)$ implies $|\pi_j^{-1}(y)\cap Z_{i-1}^\#|=1$. Hence $\pi_i(Z_i^\#)\cap\pi_j(\widetilde Z_j^\#) = \emptyset$ as claimed. The last assertion follows from $\widetilde Z_i\subset Z_i^\#$.
 
\medbreak

\begin{remark}
The Bowen quotient is used for two seemingly distinct purposes: first, to remove points whose images have already been taken care of; second, to  lower the minimal degree to~$1$.
\end{remark}

\section{Applications to surface diffeomorphisms}\label{s-applications}

\newcommand\lip{\operatorname{Lip}}

We prove Theorem~\ref{theorem-SurfDiffCoding} and a more precise version of Theorem~\ref{theorem-SurfDiffPeriodic}. 

\medbreak
Let $f$ be a $C^{1+\alpha}$-diffeomorphism, $\alpha>0$, of a smooth closed surface $M$ with $h_\top(f)>0$. 
Let $\chi>0$ and $\tilde\chi<\chi$ (arbitrarily close to $\chi$, see below). Recall that a measure is $\chi$-hyperbolic if it has one positive exponent larger than $\chi$ and one negative exponent less than $-\chi$.

As observed in \cite[Sec. 8]{BoyleBuzzi-2017}, Sarig \cite{Sarig-JAMS} provides  a Markov shift $\hat\Sigma$ and a H\"older-continuous semiconjugacy $\hat\pi:\hat\Sigma\to M$ such that, $\hat\Sigma^\#$ denoting its regular part:
\begin{itemize}
 \item[(P1)] $\hat\pi|\hat\Sigma^\#$ admits a Bowen relation (called \emph{affiliation} in  \cite[Sec. 12.3]{Sarig-JAMS}) which is locally finite (see \cite[Summary 8.1(4)(5)]{BoyleBuzzi-2017});
 \item[(P2)] $\hat\pi|\hat\Sigma^\#$ is finite-to-one (as explained in \cite{Lima-Sarig} the claim that $\hat\pi$ is finite-to-one on $\Sigma$ itself was made erroneously in \cite{Sarig-JAMS});
 \item[(P3)] $\mu(\hat\piS(\hat\Sigma^\#))=1$ for any $\chi$-hyperbolic measure $\mu\in\Proberg(f)$.
 \item[(P4)] any ergodic $\nu$ on $\hat\Sigma$, $\hat\pi_*(\nu)$ is $\chi/2$-hyperbolic, see \cite[Prop. 12.6]{Sarig-JAMS}.
\end{itemize}
The construction and analysis of $\hat\Sigma$ in \cite{Sarig-JAMS} relies on another Markov shift $\Sigma$ defined by a graph $\mathcal G$ whose vertices $\Psi^{p^s,p^u}$ are  \emph{double  charts}, that is, local charts $\Psi:(-r,r)^2\to M$  centered at some point $x\in M$ together with two numbers $p^s,p^u>0$ with $r=\min(p^s,p^u)$.  
The charts $\Psi$ are defined by Pesin theory as $\exp_x\circ\, C_\chi(x)$ where $\exp_x$ is the exponential map centered at $x$ and $C_\chi(x)$ is the Oseledets-Pesin reduction matrix. These charts make ``the hyperbolicity of $f$ uniform'': for any arrow $\Psi^{p^s,p^u}\to\Phi^{q^s,q^u}$ in $\mathcal G$, the map $\Phi^{-1}\circ f\circ\Psi$ is close to a linear map $(x_1,x_2)\mapsto(\lambda x_1,\kappa x_2)$ with $\lambda>e^\chi$ and $\kappa<e^{-\chi}$. 

 Each Markov shift has its cylinders. For $\Sigma$, they are:
 $$
     Z_{-n}(\Psi_n,\dots,\Psi_n):=\pi\{x\in\Sigma:\forall |k|\leq n\; x_k=\Psi_k\}\subset M \text{ where each $\Psi_k$ is a double chart}
 $$
while those in $\hat\Sigma$ are:
 $$
     {}_{-n}[ R_{-n},\dots, R_n] := {\hat\pi}\{x\in\hat\Sigma:\forall |k|\leq n\; x_k=R_k\}\subset M \text{ where each $R_k$ is a rectangle}.
 $$

In this way,  two H\"older-continuous semiconjugacies $\pi:\Sigma\to M$ and $\hat\pi:\hat\Sigma\to M$ are defined by some shadowing properties. (Contrarily to $\hat\pi|\hat\Sigma^\#$, the map $\pi|\Sigma^\#$ is not finite-to-one.)

To prove the last claim of Theorem~\ref{theorem-SurfDiffCoding}, we use a strengthening of property (P4) above: we can replace $\chi/2$ by any number less than $\chi$, at least in the case of periodic orbits. We freely use the terminology and notations from \cite{Sarig-JAMS}, including the two semiconjugacies $\hat\pi:\hat\Sigma\to M$ and $\pi:\Sigma\to M$.

\begin{lemma}\label{lem-barchi1}
Given $\tilde\chi<\chi$, there is a coding $\pih:\hat\Sigma\to M$  with (P1)-(P4) as above that additionally satisfies the following property: for any  periodic sequence $\hat x\in\hat\Sigma$, $\hat\pi(\hat x)$ is $\tilde\chi$-hyperbolic.
\end{lemma}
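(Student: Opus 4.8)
\emph{Approach.} The plan is to keep Sarig's coding essentially unchanged and merely exploit the freedom in its discretization parameter. Sarig's construction of $\hat\Sigma$ and $\hat\pi$ depends on a small parameter $\epsilon>0$, and for every sufficiently small $\epsilon$ the resulting coding satisfies (P1)--(P4) with the \emph{same} number $\chi$. I would fix $\epsilon$ small enough that $\epsilon<\chi-\tilde\chi$ and let $\hat\pi:\hat\Sigma\to M$ be Sarig's coding for this $\epsilon$; then (P1)--(P4) hold by \cite{Sarig-JAMS}, and only the new statement on periodic sequences remains. The intuition is that the loss from $\chi$ to $\chi/2$ in (P4) stems from the possible decay of the double-chart parameters along a general admissible chain, whereas along a \emph{periodic} chain only finitely many double charts occur, so the relevant geometry is uniformly bounded and the exponents are pinned within $\epsilon$ of $\chi$.

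\emph{Reduction to a periodic chain of double charts.} Let $\hat x\in\hat\Sigma$ be periodic and $p:=\hat\pi(\hat x)$, a periodic point of $f$. Applying (P4) to the ergodic periodic measure carried by the orbit of $\hat x$ already shows that the orbit of $p$ is hyperbolic, so $E^s(p)$ and $E^u(p)$ are well-defined one-dimensional subspaces. I would then pass to the double-chart picture of \cite{Sarig-JAMS}: each letter of $\hat\Sigma$ (a rectangle) has an associated double chart, so $\hat x$ gives a periodic, $\epsilon$-admissible chain $v=(v_k)_{k\in\ZZ}$ of double charts $v_k=\Psi_k^{p_k^s,p_k^u}$ with $\pi(v)=p$, where $\pi:\Sigma\to M$ is Sarig's first coding; let $n$ be a common period of $\hat x$ and $v$. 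Since $v$ is periodic, only finitely many double charts occur in it, hence $p_k^s,p_k^u\geq\rho$ for all $k$ and some $\rho=\rho(p)>0$; consequently the reduction matrices $C_\chi$ and the chart transitions along $v$ have norms and inverse norms bounded by a constant $C=C(\rho)$.

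\emph{The exponent estimate.} By the definition of the graph $\mathcal G$, along each edge of an $\epsilon$-admissible chain the map $F_k:=\Psi_{k+1}^{-1}\circ f\circ\Psi_k$ expands unstable-cone vectors by a factor at least $e^{\chi-\epsilon}$ and contracts stable-cone vectors by a factor at least $e^{-(\chi-\epsilon)}$. Expressing $Df^{mn}(p)$ restricted to $E^u(p)$ through the charts $\Psi_0$ and $\Psi_{mn}=\Psi_0$ and absorbing the conjugating matrices into the uniform bound $C$, one obtains, for every $m\geq1$ and every nonzero $v\in E^u(p)$,
\[
 \frac{1}{mn}\log\bigl\|Df^{mn}(p)v\bigr\|\ \geq\ \chi-\epsilon-\frac{2\log C}{mn}.
\]
Letting $m\to\infty$, the left-hand side tends to the unstable Lyapunov exponent of the orbit of $p$, which is therefore at least $\chi-\epsilon$; the symmetric computation gives a stable exponent at most $-(\chi-\epsilon)$. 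Since $\epsilon<\chi-\tilde\chi$, the orbit of $p$ is $\tilde\chi$-hyperbolic, proving the lemma.

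\emph{Main obstacle.} The delicate part is the reduction step: making precise the correspondence between a periodic point of $\hat\Sigma$ and a periodic $\epsilon$-admissible chain of double charts whose parameters are bounded below, and extracting from \cite{Sarig-JAMS} (and \cite{Lima-Sarig}) both the per-step hyperbolicity $e^{\pm(\chi-\epsilon)}$ of $f$ in charts and the fact that the conjugating constant depends only on a lower bound $\rho$ for those parameters. These ingredients are built into the design of Sarig's charts but are spread across his construction, so the effort lies in citing and assembling the right lemmas rather than in a new argument; the limit $m\to\infty$, which is precisely what makes the orbit-dependent constant $C(\rho)$ drop out, is the only genuinely load-bearing device.
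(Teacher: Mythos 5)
Your proposal is correct and follows essentially the same route as the paper: pass from the periodic sequence $\hat x$ to a chain of double charts (the paper cites Sarig's Lemma~12.2), use the per-step hyperbolicity $e^{\chi}-2\eps\geq e^{\tilde\chi}$ in charts (Sarig's Prop.~3.4), and exploit the fact that a periodic point only involves finitely many double charts (Sarig's Thm.~10.2) so that the chart-conjugation constants are uniformly bounded and drop out in the Lyapunov-exponent limit. The only cosmetic difference is that you assume the chart chain is literally periodic (so $\Psi_{mn}=\Psi_0$), whereas the paper only uses that $\Psi_n$ takes finitely many values, which is all the argument actually needs.
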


\begin{proof}
Let $x:=\hat\pi(\hat x)$. Lemma 12.2 from \cite{Sarig-JAMS} yields a sequence of double charts $(\Psi_n)_{n\in\ZZ}\in\Sigma$ such that, for all $n\geq0$, ${}_{-n}[\hat x_{-n}\dots\hat x_n] \subset Z_{-n}(\Psi_{-n},\dots,\Psi_n)$. It follows from Proposition 4.11 in \cite{Sarig-JAMS} that  all points in $\pi([\Psi_0]\cap\Sigma^\#)$ can be written $\Psi_0(t)$ with $t\in\RR^2$ close to $0\in\RR^2$. Hence  $\hat\pi(\hat x)$ lift to $t_0$ in the domain of the chart $\Psi_0$: that is, for each $n\in\ZZ$, $t_n:=\Psi_n^{-1}\circ f^n(\hat\pi(\hat x))$ is well-defined.  Letting $f_k:=\Psi_k^{-1}\circ f\circ\Psi_{k-1}$, we have $t_k=f_k(t_{k-1})$ and:
 $$
     Df^n(x) = D\Psi_n\circ Df_{n}\circ\dots \circ Df_{1}\circ D\Psi_0^{-1}(x).
  $$
Thus
 $$
    \|Df^n(x)\| \geq \| D\Psi_n(t_{n-1})^{-1}\|^{-1} \cdot \|D\Psi_0(t_0)\|^{-1} \cdot \|Df_{n}\circ\dots Df_{1}\|
 $$
By Proposition 3.4 of \cite{Sarig-JAMS}, choosing the parameter $\epsilon>0$ of Sarig's construction small enough and considering vectors in the unstable cone, we obtain:
 $$
    \|Df_{n}\circ\dots \circ Df_{1}\|\geq (e^{\chi}-2\eps)^n\geq  e^{n\tilde \chi}.
  $$
Since $\Psi_k=\exp_{x_k}\circ\,C_\chi(x_k)$ where $x_k$ is the center of $\Psi_k$ \cite[eq.~(2.2)]{Sarig-JAMS}, we have: $\|D\Psi_n^{-1}\|\leq C_0 \cdot \|C_\chi(x_n)^{-1}\|$  for   some constant $C_0$ (depending only on $f$). 
 Since $\hat x$ is periodic, \cite[Theorem~10.2]{Sarig-JAMS} shows that $\Psi_n$ takes only finitely many values as $n$ ranges over $\ZZ$. 
 It follows that {setting $C_1(x):=\inf_{n\geq0} \lip(\Psi_n^{-1})^{-1}.\lip(\Psi_0)^{-1}>0$, we get:}
  $$
    \forall n\geq0\;  \|Df^n(x)\| \geq C_1(x) e^{n\tilde \chi}.
   $$
Hence the periodic orbit $\cO(x)$ has a positive exponent larger than or equal to $\tilde\chi$. A symmetric argument shows that $\cO(x)$ is $\tilde\chi$-hyperbolic.
\end{proof}

\begin{proof}[Proof of Theorem~\ref{theorem-SurfDiffCoding}]
We consider Sarig's coding with the addition of the property from Lemma~\ref{lem-barchi1}. The previous discussion shows that our Main Theorem applies. It produces a new coding of the form $\pih\circ q$, with $q$ H\"older-continuous, and whose image can be smaller, but only by a null set. The new coding is easily seen to satisfy our claims.
\end{proof}

We turn to the counting of hyperbolic periodic orbits. This requires the following estimate in eq.~\eqref{eq-lower-bound-Xi}. It is folklore, but since we did not find a reference we will {deduce it from}  \cite[chap. 7]{Kitchens-book}, using freely its terminology and notations.\footnote{We note that a similar estimate was obtained, e.g., in \cite{Buzzi-QFT} but with a stronger assumption (the SPR property) and stronger conclusion (an error estimate).}

A measure maximizing the entropy (or: m.m.e.) of some Borel automorphism is an invariant Borel probability measure which realizes the supremum of the Kolmogorov-Sinai entropy over all invariant probability measures.

\begin{lemma}\label{lem-barchi2}
If $(X,\sigma)$ is an irreducible Markov which is positively recurrent (i.e., it has some m.m.e. and its entropy is finite) with period $p$, then: 
 \begin{equation}\label{eq-lower-bound-Xi}
  \lim_{\scriptsize\begin{array}{c} n\to\infty\\ p|n\end{array}} e^{-n h_\top(f)}\cdot |\{x\in X:  |\{\sigma^kx:k\in\ZZ\}|=n \}| \geq p. 
 \end{equation}
\end{lemma}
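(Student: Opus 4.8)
The plan is to reduce \eqref{eq-lower-bound-Xi} to Gurevich's formula for the number of periodic points, which is the content of \cite[chap.~7]{Kitchens-book}, and then to pass from fixed points of $\sigma^n$ to points of minimal period $n$ by an elementary inclusion--exclusion. Throughout I write $h:=h_\top(X)$ (which coincides with the Gurevich entropy of $X$), $\lambda:=e^{h}$, $R:=\lambda^{-1}$, and I let $A$ be the $0$--$1$ transition matrix of the graph describing $X$, so that $(A^n)_{ab}$ counts the paths of length $n$ from $a$ to $b$ and $Z_n:=|\{x\in X:\sigma^n x=x\}|=\operatorname{tr}(A^n)=\sum_a (A^n)_{aa}$. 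Note $h>0$: an irreducible Markov shift of zero Gurevich entropy is a single periodic orbit, for which \eqref{eq-lower-bound-Xi} holds vacuously. From the Vere--Jones--Gurevich theory recalled in \cite[chap.~7]{Kitchens-book} --- the classification via the first-return generating functions $F_a(z):=\sum_{n\ge1}f_n(a)z^n$, where $f_n(a)$ is the number of paths $a\to a$ of length $n$ not meeting $a$ in between, together with the renewal theorem and Kac's identity --- one has: $Z_n<\infty$ for all $n$; $Z_n=0$ unless $p\mid n$; and
\[
  \lim_{\substack{n\to\infty\\ p\mid n}}\lambda^{-n}Z_n=p .
\]
(At a single state $a$, positive recurrence reads $F_a(R)=1$ and $\mu_a:=RF_a'(R^-)<\infty$, so $\{f_n(a)R^n\}_n$ is a probability law supported on $p\ZZ_{>0}$, of period $p$ and mean $\mu_a$; the renewal equation $(A^n)_{aa}R^n=\sum_{k=1}^{n}f_k(a)R^k\,(A^{n-k})_{aa}R^{n-k}$ and the renewal theorem give $\lambda^{-np}(A^{np})_{aa}\to p/\mu_a=p\,m([a])$, where $m$ is the measure maximizing the entropy and the last equality is Kac's return-time formula; summing over all states $a$ is Gurevich's theorem.)

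Granting this, set $P_n:=|\{x\in X:|\{\sigma^k x:k\in\ZZ\}|=n\}|$. Since the orbit length of a $\sigma^n$-fixed point divides $n$, and since $P_d\le Z_d<\infty$ for every $d$, the identity $Z_n=\sum_{d\mid n}P_d$ holds with finite terms, whence
\[
  P_n=Z_n-\sum_{\substack{d\mid n\\ d<n}}P_d\ \ge\ Z_n-\sum_{\substack{d\mid n\\ d<n}}Z_d .
\]
It remains to show that the error term is negligible after division by $\lambda^{n}$. A proper divisor $d$ of $n$ satisfies $d\le n/2$; and since $\lambda^{-n}Z_n\to p$ along $p\mid n$ while $Z_n=0$ otherwise, there is a constant $C$ with $Z_d\le C\lambda^{3d/2}$ for all $d\ge1$. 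As $n$ has fewer than $n$ divisors,
\[
  \lambda^{-n}\sum_{\substack{d\mid n\\ d<n}}Z_d\ \le\ C\,n\,\lambda^{3n/4-n}=C\,n\,\lambda^{-n/4}\ \xrightarrow[n\to\infty]{}\ 0,
\]
because $\lambda>1$. Together with the previous display this yields $\lambda^{-n}P_n\to p$ along $p\mid n$, i.e.\ \eqref{eq-lower-bound-Xi} (in fact with a genuine limit).

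The main difficulty is not conceptual but lies in pinning down precisely which form of Gurevich's periodic-point asymptotics is available for a \emph{merely} positive recurrent shift: specifically the finiteness of the $Z_n$ and the full convergence $\lambda^{-n}Z_n\to p$, as opposed to the inequality $\liminf_{p\mid n}\lambda^{-n}Z_n\ge p$ that follows at once from the per-state limits by Fatou (and which contrasts with the quantitative error term available under the strong positive recurrence hypothesis of \cite{Buzzi-QFT}). The inclusion--exclusion and divisor estimate are then routine; if only the Fatou inequality were granted abstractly, one would in addition have to invoke the crude bound $\limsup_n\frac1n\log Z_n\le h$ to control the error term, but the conclusion would be unchanged.
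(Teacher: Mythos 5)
Your overall strategy is the same as the paper's (per-state Gurevich/Vere--Jones asymptotics plus inclusion--exclusion over divisors, using that a proper divisor of $n$ is at most $n/2$), but there is a genuine gap in the step where you pass from one state to the whole shift. The renewal-theorem/Kac argument legitimately gives the \emph{per-state} limit $\lambda^{-n}\,|\{x:\sigma^nx=x,\ x_0=a\}|\to p\,\nu([a])$ along $p\mid n$; but your asserted global asymptotic $\lambda^{-n}Z_n\to p$ for the \emph{total} count $Z_n=\sum_a(A^n)_{aa}$ does not follow from this over a countable alphabet, and it is false in general under mere positive recurrence. (Example: attach to a base vertex, for each $n$, about $\lambda^{n}/n^{3}$ disjoint simple cycles of length $n$ via one incoming and one outgoing edge; the shift can be made irreducible and positive recurrent, yet $Z_n\gtrsim n\cdot\lambda^n/n^3$ counts only the per-state contribution at the base --- the cycles themselves contribute $n\cdot M_n$ points of minimal period $n$, so $\lambda^{-n}Z_n\to\infty$.) Fatou only yields $\liminf_{p\mid n}\lambda^{-n}Z_n\ge p$, exactly as you note; but then your error bound $Z_d\le C\lambda^{3d/2}$, which you derive \emph{from} the false convergence, is unjustified, and the fallback bound $\limsup_n\frac1n\log Z_n\le h$ for the total count is likewise not a "crude" fact --- it requires an argument for countable-state shifts and is not obviously true (even the finiteness of $Z_n$ is not automatic). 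Your claimed strengthening "in fact with a genuine limit" is therefore also wrong.

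The repair is to localize the whole argument, which is what the paper does: for each fixed state $a$ one has $Z_d(a):=|\{x:\sigma^dx=x,\ x_0=a\}|\le C_a\lambda^d$ (this \emph{does} follow from the per-state convergence), so
\[
|\{x:x_0=a,\ |\{\sigma^kx\}|=n\}|\ \ge\ Z_n(a)-\sum_{d\mid n,\ d\le n/2}Z_d(a)\ \ge\ Z_n(a)-C_a\,n\,\lambda^{n/2},
\]
whence $\lim_{p\mid n}\lambda^{-n}|\{x:x_0=a,\ |\{\sigma^kx\}|=n\}|=p\,\nu([a])$. Summing over a finite set $F$ of states (the sets $\{x_0=a\}$ are disjoint) and letting $F$ exhaust the alphabet gives $\liminf_{p\mid n}\lambda^{-n}P_n\ge p\,\nu(X)=p$, which is the inequality actually claimed. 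Two further small points: the identities you use from Kitchens ($F_a(R)=1$, the renewal equation, Kac) are the same ingredients as the paper's $L_{aa}(1/\lambda)=R_{aa}(1/\lambda)=1$ and Lemmas 7.1.18/7.1.21/7.2.15, so that part is fine; and your remark that the zero-entropy case "holds vacuously" is incorrect --- for a single periodic orbit of length $p$ the left-hand side of \eqref{eq-lower-bound-Xi} vanishes for $n=2p,3p,\dots$ --- but this degenerate case does not occur in the application, where $h_\top(f)>0$.
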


\begin{proof}
We freely use results and notations from Kitchens' book \cite{Kitchens-book}  and in particular the generating functions $L_{ab}(z)$ and $R_{ab}(z)$.
First suppose that $X$ is mixing (i.e., $p=1$). Fix  some symbol $a\in\cA$ occuring in $X$ and set $\lambda:=e^{h_\top(f)}$.  Since $X$ is recurrent, Theorem 7.1.18 implies:
 $$
   \lim_{n\to\infty} \lambda^{-n}|\{x\in  X:x_0=a,\; \sigma^nx=x\}| =\frac1{\mu(a)}
 $$
where $\mu(a):=(1/\lambda)L_{aa}'(1/\lambda)$. Since $X$ is positive recurrent, Lemma 7.1.21 yields $\mu(a) = \ell^{(a)}\cdot r^{(b)}$ where $\ell^{(a)}:= (L_{aj}(1/\lambda))_{j\in\cA}$ and $r^{(b)}:=(R_{jb}(1/\lambda))_{j\in\cA}$. We also have $L_{aa}(1/\lambda)=R_{aa}(1/\lambda)=1$ for all $a\in\cA$ as $X$ is recurrent (see the proof of Lemma 7.1.8, recalling that, by definition, $T_{aa}(1/\lambda)=\infty$ if and only if $X$ is recurrent) . Thus,
 $$
   \frac1{\mu(a)}=\frac{1}{\sum_{j\in\cA} L_{aj}(1/\lambda)R_{ja}(1/\lambda)} 
   = \frac{L_{aa}(1/\lambda)R_{aa}(1/\lambda)}
   {\sum_{j\in\cA} L_{aj}(1/\lambda)R_{ja}(1/\lambda)} .
 $$
 Now  Lemma 7.2.15, implies that $\nu([b])= \frac{\ell^{(a)}_b r^{(a)}_b}{\sum_j \ell^{(a)}_j r^{(a)}_j} $ for any $a,b\in\cA$. Thus,
 $$
     \lim_{n\to\infty} \lambda^{-n}|\{x\in  X:x_0=a,\; \sigma^nx=x\}|  = \nu([a]).
  $$
For $p>1$, the cyclic decomposition from \cite[p. 223]{Kitchens-book} yields:
 $$
   \lim_{n\to\infty} \lambda^{-n}|\{x\in X:x_0=a,\; \sigma^nx=x\}| = p\nu([a]).
 $$
 Using the decomposition:
 $$
    \{x\in X:x_0=a,\; \sigma^nx=x\} = \bigsqcup_{k|n} \{x\in X:x_0=a,\;  |\{\sigma^j(x):j\in\ZZ\}|=k\}
 $$
and noting that $k|n$ implies $k=n$ or $k\leq n/2$, we get:
 $$
   \lim_{n\to\infty} \lambda^{-n}|\{x\in X:x_0=a,\; |\{\sigma^j(x):j\in\ZZ\}|=n\}| = p\nu([a]).
 $$

Since $\nu(X)=1$, a routine argument shows eq.~\eqref{eq-lower-bound-Xi}. 
\end{proof}

We are going to obtain the following relation between  periodic points and measures maximizing the entropy:

\begin{theorem} \label{thm-lower-per}
Let $f\in\Diff^{1+\alpha}(M)$ where $M$ is a closed surface and $\alpha>0$. Assume that there are distinct ergodic measures maximizing the entropy: $\mu_1,\dots,\mu_r$ with periods $p_1,\dots,p_r\geq1$. Fix $\tilde\chi<h_\top(f)$. Then
 \begin{equation}\label{eq-lower-per}
      \liminf_{\scriptsize\begin{array}{c} n\to\infty\\ p_1,\dots,p_r|n\end{array}} e^{-n h_\top(f)}\cdot |\per_{\tilde\chi}(f,n)| \geq p_1+\dots p_r.
 \end{equation}
\end{theorem}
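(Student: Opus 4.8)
The plan is to lift each ergodic measure of maximal entropy $\mu_i$ to an irreducible positively recurrent component of the injective coding provided by Theorem~\ref{theorem-SurfDiffCoding}, to apply the periodic-orbit count of Lemma~\ref{lem-barchi2} to each such component, and to transport the resulting periodic points back to $M$. First I would fix a number $\chi$ with $\tilde\chi<\chi<h_\top(f)$. Each $\mu_i$ is ergodic with $h(\mu_i)=h_\top(f)>0$, so the Margulis--Ruelle inequality \cite{Katok-Hasselblatt} applied to $f$ and to $f^{-1}$ forces both Lyapunov exponents of $\mu_i$ to have absolute value at least $h_\top(f)>\chi$; in particular $\mu_i$ is $\chi$-hyperbolic. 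Applying Theorem~\ref{theorem-SurfDiffCoding} with this $\chi$ and $\chi':=\tilde\chi$ produces a Markov shift $(\tilde S,\tilde X)$ and an injective, H\"older-continuous semiconjugacy $\Pi\colon\tilde X\to M$ with $\Pi\circ\tilde S=f\circ\Pi$, whose image has full measure for every $\chi$-hyperbolic measure and for which $\Pi(x)$ defines a $\tilde\chi$-hyperbolic periodic measure whenever $x\in\tilde X$ is periodic. Injectivity of $\Pi$ makes it preserve minimal periods, since $\tilde S^kx\ne x$ implies $f^k(\Pi x)=\Pi(\tilde S^kx)\ne\Pi(x)$.

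Next I would lift the measures. Since $\tilde X$ is closed in a countable-alphabet full shift and $\Pi$ is Borel and injective, the Lusin--Suslin theorem \cite[(15.2)]{Kechris-book} makes $\Pi$ a Borel isomorphism of $\tilde X$ onto a Borel subset of $M$; as $\mu_i$ is carried by $\Pi(\tilde X)$, there is a unique ergodic $\tilde\mu_i\in\Prob(\tilde S)$ with $\Pi_*\tilde\mu_i=\mu_i$ and $(\tilde X,\tilde\mu_i,\tilde S)\cong(M,\mu_i,f)$, so $h(\tilde\mu_i)=h_\top(f)$. Moreover every $\nu\in\Prob(\tilde S)$ satisfies $h(\nu)=h(\Pi_*\nu)\le h_\top(f)$ by the variational principle, so the Gurevich entropy of $\tilde X$ is exactly $h_\top(f)<\infty$ and each $\tilde\mu_i$ is an m.m.e.\ of $\tilde X$. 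By ergodicity $\tilde\mu_i$ lives on a single irreducible component $\tilde X_i$, which therefore carries an m.m.e.\ of finite entropy $h_\top(f)$ --- hence is positively recurrent with $h_\top(\tilde X_i)=h_\top(f)$ --- and whose m.m.e.\ is unique and equal to $\tilde\mu_i$. Since the $\mu_i$ are distinct so are the $\tilde\mu_i$, hence the $\tilde X_i$ are pairwise distinct and thus pairwise disjoint. Finally, the graph period of $\tilde X_i$ equals the spectral period of its unique m.m.e.\ $\tilde\mu_i$ (cyclic decomposition of a positively recurrent irreducible Markov shift, cf.\ \cite[Ch.~7]{Kitchens-book}), which by the isomorphism $(\tilde X,\tilde\mu_i)\cong(M,\mu_i)$ is the period $p_i$ of $\mu_i$.

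Then I would count. For each $i$, Lemma~\ref{lem-barchi2} applied to the irreducible positively recurrent Markov shift $\tilde X_i$, of period $p_i$ and Gurevich entropy $h_\top(f)$, gives
$$
  \lim_{\scriptsize\begin{array}{c}n\to\infty\\ p_i|n\end{array}} e^{-n\,h_\top(f)}\,\bigl|\{x\in\tilde X_i:|\{\tilde S^kx:k\in\ZZ\}|=n\}\bigr|\ \geq\ p_i .
$$
For $n$ divisible by all of $p_1,\dots,p_r$, the injective map $\Pi$ sends $\{x\in\tilde X_i:|\{\tilde S^kx:k\in\ZZ\}|=n\}$ into $\per_{\tilde\chi}(f,n)$ --- minimal period is preserved as above, and $\tilde\chi$-hyperbolicity comes from Theorem~\ref{theorem-SurfDiffCoding} --- and the images for different indices are disjoint because the $\tilde X_i$ are disjoint and $\Pi$ is injective. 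Hence $|\per_{\tilde\chi}(f,n)|\ge\sum_{i=1}^r|\{x\in\tilde X_i:|\{\tilde S^kx:k\in\ZZ\}|=n\}|$, and taking $\liminf$ over $n$ divisible by every $p_i$ --- a cofinal subsequence of each $\{n:p_i|n\}$, along which each of the finitely many terms on the right still tends to its limit $\ge p_i$ --- yields $\liminf e^{-n h_\top(f)}|\per_{\tilde\chi}(f,n)|\ge p_1+\dots+p_r$.

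The main obstacle is the entropy-and-period bookkeeping of the second step: checking that the irreducible component $\tilde X_i$ onto which $\mu_i$ lifts is positively recurrent with Gurevich entropy exactly $h_\top(f)$, that distinct $\mu_i$ yield distinct components, and that the graph period of $\tilde X_i$ coincides with the prescribed period $p_i$ of $\mu_i$. Once Theorem~\ref{theorem-SurfDiffCoding} and Lemma~\ref{lem-barchi2} are in hand, transporting the periodic points and assembling the estimate is routine.
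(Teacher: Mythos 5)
Your proposal is correct and follows essentially the same route as the paper: pass to the injective coding of Theorem~\ref{theorem-SurfDiffCoding} (whose proof is exactly Sarig's coding refined by Lemma~\ref{lem-barchi1} plus the Main Theorem), lift each $\mu_i$ to a distinct m.m.e.\ carried by a disjoint, positively recurrent irreducible component of period $p_i$, apply Lemma~\ref{lem-barchi2} to each component, and transport the periodic counts injectively into $\per_{\tilde\chi}(f,n)$. Your extra details --- the Ruelle-inequality check that each $\mu_i$ is $\chi$-hyperbolic and the entropy/period bookkeeping for the lifted components --- are exactly the points the paper handles via Sarig's Bernoulli-times-rotation description and \cite{GurevicSavchenko-1998}.
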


When $f$ is $C^\infty$ smooth, Newhouse's Theorem \cite{Newhouse-1989} shows that there is at least one m.m.e. If, additionally, $f$ is topologically mixing, \cite{BCS} shows that there is a m.m.e. with period equal to $1$. Therefore:

\begin{corollary}
In the setting of the above theorem, assuming additionally that $f$ is $C^\infty$ we obtain:
 \begin{itemize}
  \item for some integer $p\geq1$, $\liminf_{\scriptsize n\to\infty,p|n} e^{-n h_\top(f)}\cdot |\per_{\tilde\chi}(f,n)| \geq p$;
  \item if $f$ is topologically mixing,  $\liminf_{\scriptsize n\to\infty} e^{-n h_\top(f)}\cdot |\per_{\tilde\chi}(f,n)| \geq 1$.
 \end{itemize}
\end{corollary}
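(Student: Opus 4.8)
The plan is to deduce both assertions directly from Theorem~\ref{thm-lower-per} by exhibiting, in each case, a well-chosen ergodic measure maximizing the entropy (m.m.e.) and applying that theorem with a single such measure ($r=1$). Throughout I use that $h_\top(f)>0$: this is implicit in the choice of $\tilde\chi<h_\top(f)$, since the notion of $\tilde\chi$-hyperbolicity is meaningful (as defined in the introduction) for $\tilde\chi>0$.

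First I would invoke Newhouse's Theorem \cite{Newhouse-1989}: since $f$ is $C^\infty$, the map $\mu\mapsto h_\mu(f)$ is upper semicontinuous on the compact metrizable space of $f$-invariant probability measures, so the supremum $h_\top(f)$ is attained; passing to a typical ergodic component (entropy being affine) yields an ergodic m.m.e., say $\mu_1$, of some period $p_1\geq1$. This supplies the hypothesis of Theorem~\ref{thm-lower-per}, which, applied with $r=1$ to $\mu_1$, gives
 $$
   \liminf_{\scriptsize\begin{array}{c} n\to\infty\\ p_1|n\end{array}} e^{-n h_\top(f)}\cdot|\per_{\tilde\chi}(f,n)| \geq p_1 ,
 $$
which is the first assertion with $p:=p_1$. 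For the second assertion, assume in addition that $f$ is topologically mixing. Then \cite{BCS} furnishes an ergodic m.m.e. $\mu_1$ with period $p_1=1$, so that the divisibility constraint $p_1|n$ is vacuous; Theorem~\ref{thm-lower-per} with $r=1$ and $p_1=1$ then yields
 $$
   \liminf_{n\to\infty} e^{-n h_\top(f)}\cdot|\per_{\tilde\chi}(f,n)| \geq 1 ,
 $$
as claimed.

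I do not expect any genuine obstacle here: the substantive work is already contained in Theorem~\ref{thm-lower-per}, and the corollary only requires feeding it the correct existence input --- Newhouse's attainment of the entropy maximum for the first statement, and the period-one refinement of Buzzi--Crovisier--Sarig for the mixing case. One could strengthen the first statement by applying Theorem~\ref{thm-lower-per} simultaneously to all ergodic m.m.e.'s, taking $p$ to be the least common multiple of their periods and summing those periods on the right-hand side, but this would need finiteness of the set of ergodic m.m.e.'s and is not required for the stated corollary.
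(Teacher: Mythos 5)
Your argument is correct and coincides with the paper's own proof: Newhouse's theorem supplies an ergodic m.m.e.\ so that Theorem~\ref{thm-lower-per} applies with $r=1$, and in the mixing case the period-one m.m.e.\ from \cite{BCS} removes the divisibility constraint. No gaps.
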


This implies Theorem~\ref{theorem-SurfDiffPeriodic}. 

\begin{proof}[Proof of Theorem~\ref{thm-lower-per}]
We fix $\tilde\chi<h_\top(f)$ and consider a coding $\pih:\hat\Sigma\to M$  as in Lemma~\ref{lem-barchi1}.

If $\mu_1,\dots,\mu_r\in\Proberg(f)$ are distinct  m.m.e.'s, Sarig's \cite{Sarig-JAMS} shows that each $\mu_i$ is isomorphic to the product of a Bernoulli scheme and a circular permutation of some order $p_i$. Being an m.m.e. is invariant under Borel conjugacy, hence $\hat\Sigma$ carries distinct m.m.e.'s $\nu_1,\dots,\nu_r$ with $\pi_*(\nu_i)=\mu_i$. 

By general results about Markov shifts and their m.m.e.'s \cite{GurevicSavchenko-1998}, $\hat\Sigma$ contains disjoint irreducible components $X_1,\dots,X_r$, where each $X_i$ carries a distinct m.m.e.\ $\nu_i$.  In particular, each $X_i$ is positive recurrent and has period\footnote{ The period of an irreducible Markov shift is the greatest common divisor of the periods of its periodic points.} equal to $p_i$. 
 
By Lemma~\ref{lem-barchi1}, the following map is well-defined and injective:
 $$
   \pi:\{x\in \hat\Sigma: |\{\sigma^j(x):j\in\ZZ\}|=n\} \to \per_{\tilde\chi}(f,n).
 $$
The claim \eqref{eq-lower-per} now follows from Lemma~\ref{lem-barchi2}.
\end{proof}

\section{An obstruction to H\"older-continuous coding}\label{s-obstruction}

We prove  Theorem~\ref{prop-Holder-opti} characterizing surface diffeomorphisms with H\"older-continuous symbolic dynamics.
Recall that a map $\pi:X\to M$ is H\"older-continuous  with some positive exponent $\alpha$ if there is a constant $C<\infty$ such that, for all $x,y\in X^\#$,
 $$
     d(\pi(x),\pi(y)) \leq C \exp \left(-\alpha\inf\{|n|:x_n\ne y_n\}\right). 
 $$

Let $f$ be a diffeomorphism  of a compact $d$-dimensional manifold $M$ and let  $\mu\in\Proberg(f)$.  Write its Lyapunov exponents as  $\lambda_1(f,\mu)>\dots> \lambda_u(f,\mu)>0\geq\lambda_{u+1}(f)>\dots>\lambda_r(f,\mu)$. This measure has saddle type if $\lambda_{u+1}<0$ and $0< u<d$. Let 
 $$
    \Probhyp(f):=\{\mu\in\Proberg(f):\mu\text{ is aperiodic and of saddle type}\}.
 $$
Recall that for $\mu\in\Probhyp(f)$,
 $
    \chi(\mu):=\min(\lambda_u(f,\mu),-\lambda_{u+1}(f,\mu)).
 $
Sarig's theorem \cite{Sarig-JAMS} and its higher dimensional generalization by Benovadia \cite{benovadia-coding} yield a global coding if $\chi(f):=\inf\{\chi(\mu):\mu\in\Proberg(f)\}$ is positive. We prove:

\begin{proposition}\label{prop-Holder-opti1}
Let  $f\in\Diff^{1+}(M)$ with $M$ a closed manifold. Let $(S,X)$  be a Markov shift and let $\pi:(S,X)\to (f,M)$ be   a semiconjugacy. Assume that $\pi$ is H\"older-continuous with exponent $\alpha>0$. 
Given any $\nu\in\Proberg(\Sigma)$, if $\mu:=\pi_*(\nu)$ is hyperbolic and atomless, then:
 $$
    \hchi(\mu):=\min(\lambda_1(f,\mu), -\lambda_d(f,\mu)) \geq\alpha.
 $$
\end{proposition}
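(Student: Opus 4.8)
The plan is to exploit the H\"older-continuity of $\pi$ to transfer the contraction/expansion rates of the shift directly to differential estimates for $f$ along a generic orbit of $\mu$, and then to read off the Lyapunov exponents via the Oseledets theorem. Fix $\nu\in\Proberg(S)$ with $\mu=\pi_*(\nu)$ hyperbolic and atomless, and let $C<\infty$, $\alpha>0$ be the H\"older data, so $d(\pi(x),\pi(y))\le C e^{-\alpha\inf\{|n|:x_n\ne y_n\}}$. The key observation is that for a shift-invariant measure $\nu$ on a Markov shift, $\nu$-a.e.\ point $x$ is (bi-)recurrent to its own cylinders: for each $k\ge1$ there are infinitely many $m>0$ (and infinitely many $m<0$) with $x_{[-k,k]}=x_{[m-k,m+k]}$. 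Applying the H\"older bound to $x$ and $S^m x$ gives $d(\pi(x),\pi(S^m x))=d(\pi(x),f^m(\pi(x)))\le C e^{-\alpha k}$ whenever the orbit of length $\sim k$ around position $0$ of $x$ and around position $m$ agree. In other words, the forward orbit of $q:=\pi(x)$ returns arbitrarily close to $q$ with an essentially \emph{exponential} rate governed by $\alpha$: there are $n_j\to\infty$ with $d(f^{n_j}q,q)\le C e^{-\alpha n_j \cdot(1/2+o(1))}$ — more precisely one arranges $k\sim n_j/2$ or, better, one uses two-sided matching to get $k$ as large as one likes relative to a fixed return.

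Next I would turn this near-periodicity into a bound on $\|Df^{n}(q)\|$. The clean way: since $\mu$ is atomless, for $\nu$-a.e.\ $x$ the point $q=\pi(x)$ is not periodic, but the returns above say $f^{n_j}q\to q$; combining with a standard distortion/continuity estimate for $Df$ (here is where $f\in\Diff^{1+}$, i.e.\ $Df$ is H\"older or at least uniformly continuous, enters), one controls $\|Df^{n_j}(q)\|$ from below using the fact that $f^{n_j}$ maps a tiny ball near $q$ of radius $\rho_j\asymp e^{-\alpha n_j}$ back into a ball of comparable size. This forces $\|Df^{n_j}(q)^{-1}\|^{-1}\le$ (roughly) $e^{\alpha n_j}$ up to subexponential factors — equivalently, the smallest singular value of $Df^{n_j}(q)$ is at most $e^{\alpha n_j}$ times a constant, which by the Oseledets theorem applied at the $\mu$-generic point $q$ yields $\lambda_1(f,\mu)\le\alpha$. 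Running the identical argument for $f^{-1}$ (using that $\pi$ also conjugates $S^{-1}$ and $f^{-1}$, and the metric on $X$ is symmetric in $n$) gives $-\lambda_d(f,\mu)\le\alpha$, hence $\hchi(\mu)=\min(\lambda_1(f,\mu),-\lambda_d(f,\mu))\ge\alpha$ — wait, that is the wrong direction, so let me restate: the correct reading is that the \emph{top} exponent cannot exceed $\alpha$ would contradict the claim; instead the bound to extract is a \emph{lower} bound $\lambda_1\ge\alpha$ coming from the fact that points at shift-distance $e^{-n}$ have images at $M$-distance $\le Ce^{-\alpha n}$, so nearby symbolic points stay close under $f^n$ only if $f^n$ does not expand faster than $e^{\alpha n}$ — that is an upper bound on expansion along the \emph{stable-like} directions seen from the coding. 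The honest argument is: take $x,y\in X$ with $x_{[-n,n]}=y_{[-n,n]}$ but $x$, $y$ in the same Oseledets-regular set with a prescribed separation rate of their images; H\"older-continuity forces $d(f^{j}\pi(x),f^j\pi(y))\le Ce^{-\alpha(n-|j|)}$ for $|j|\le n$, and choosing $\pi(x),\pi(y)$ to realize the top exponent of $Df$ at a generic point shows that $Df^n$ applied to the separation vector $\pi(y)-\pi(x)$ of size $e^{-\alpha n}$ lands in a set of size $Ce^{-\alpha\cdot 0}=C$, so $\|Df^n\|\ge C^{-1}e^{\alpha n}$ along that vector — hence $\lambda_1(f,\mu)\ge\alpha$. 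Symmetrically with $n\to-\infty$ one gets $-\lambda_d(f,\mu)\ge\alpha$.

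The main obstacle — and the step that needs genuine care rather than routine estimation — is producing, for the $\mu$-generic point $q$, \emph{two distinct} preimages $x,y\in X$ (or a preimage together with a genuinely separating comparison point in the fiber over a nearby Oseledets-regular point) whose symbolic separation index is exactly comparable to the geometric separation rate dictated by the top Lyapunov exponent, so that the H\"older inequality becomes tight enough to conclude. Atomlessness of $\mu$ (hence, by Lemma~\ref{lem-null-sets}-type considerations, that $\nu$ itself is atomless and the fibers behave well) is what lets one find such nearby-but-distinct orbits; one realizes the top exponent by choosing, at an Oseledets point, a vector in the top Oseledets subspace, exponentiating it to a short curve in $M$, pulling a nearby point back under $\pi$ (using that $\pi(X)$ has full $\mu$-measure, or passing to $\pi(X^\#)$), and checking its symbolic separation from $x$ is $\gtrsim n$ because its $f^j$-images for $|j|\le n$ stay within the injectivity scale. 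Making the bookkeeping of ``injectivity scale versus Lyapunov scale versus H\"older scale'' consistent along the whole orbit segment $[-n,n]$, uniformly on a set of $\mu$-measure close to $1$ (via a Lusin/uniform-Oseledets argument), is where the real work lies; everything after that is the Oseledets theorem plus the triangle inequality.
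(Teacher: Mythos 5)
There is a genuine gap, and it is a directional one. From $x_{[-n,n]}=y_{[-n,n]}$ the H\"older hypothesis gives only \emph{upper} bounds $d(f^j\pi(x),f^j\pi(y))\le Ce^{-\alpha(n-|j|)}$, i.e., it says nearby symbolic points have images that separate \emph{no faster} than $e^{\alpha}$ per step. Your "honest argument" tries to read off $\|Df^n\|\ge C^{-1}e^{\alpha n}$ from the statement that a separation vector of size $e^{-\alpha n}$ lands in a set of size $C$; but that inequality runs the wrong way (at best it bounds the expansion along that vector \emph{above} by $Ce^{\alpha n}$), and moreover the initial separation is only known to be $\le Ce^{-\alpha n}$, not comparable to it. You notice this yourself: the third paragraph concedes that one needs a matching \emph{lower} bound on the geometric separation of two distinct preimages, calibrated against the symbolic separation index, and defers it as "where the real work lies." That deferred step is the entire content of the proposition; without it nothing is proved. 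The opening recurrence-to-cylinders/near-periodicity idea also leads nowhere and is effectively abandoned.

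The paper closes exactly this gap with Pesin theory rather than with a clever choice of preimages. One takes the two symbolic points to agree on the \emph{future}: since $\mu$ is atomless, $\nu$ lives on a nontrivial irreducible component, so for a.e.\ $x$ there is $z\ne x$ with $z_n=x_n$ for all $n\ge0$. Then $d(S^nx,S^nz)\le e^{-n}$ and H\"older-continuity gives $d(f^n\pi(x),f^n\pi(z))\le C'e^{-\alpha n}$, so $\pi(z)\in W^s(\pi(x))$ with convergence exponent $\le-\alpha$. The missing lower bound on separations is then supplied by the stable-manifold dichotomy \eqref{eq-Pesin}: a point of $W^s(y)$ other than $y$ itself has convergence exponent in $[\lambda_d(f,\mu),0)$, hence cannot converge faster than $e^{\lambda_d n}$. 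Combining the two forces $\lambda_d\le-\alpha$, and the symmetric argument for $f^{-1}$ (points agreeing in the past) gives $\lambda_1\ge\alpha$. If you want to repair your write-up, replace the Oseledets/derivative bookkeeping by this single appeal to the Pesin estimate on convergence rates along stable manifolds.
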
 

This shows that $\inf\{\hchi(f,\mu):\mu\in\Probhyp(f)\}\geq\alpha$ is a necessary condition for the existence of a H\"older-continuous coding with exponent $\alpha$. Since $\hchi(f,\mu)=\chi(f,\mu)$ in dimension $2$, Theorem~\ref{prop-Holder-opti} is established.

\begin{proof}
Let $\nu\in\Proberg(\Sigma)$ such that $\pi_*(\nu)\in\Probhyp(f)$. For $\mu$-a.e. $x\in\Sigma$, the Pesin stable manifold of $y:=\pi(x)$ 
 $$
     W^s(y):=\{z\in M: \lim_{n\to\infty}\frac1n\log d(f^n(z),f^n(y)) <0 \}
 $$
satisfies:
  \begin{equation}\label{eq-Pesin}
       W^s(y)=\{z\in M: z=y \text{ or }  \lim_{n\to\infty}\frac1n\log d(f^n(z),f^n(y)) \in [\lambda_r(f,\mu),0)  \}.  
  \end{equation}
Since $\nu$ is not carried by a periodic orbit, it is carried by a nontrivial irreducible component of the Markov shift. Hence there is $z\in\Sigma$ such that $z\ne x$ and $z_n=y_n$ for all $n\geq0$. Therefore $d(\sigma^nx,\sigma^nz)=Ce^{-n}$ for some $C>0$ and all $n\geq0$. Now the H\"older-continuity of $\pi$ gives $C'>0$ such that
 $$
   \forall n\geq0\quad d(f^n(y),f^n(\pi(z))) \leq C' e^{-\alpha n}.
 $$
This exponential convergence implies that $\pi(z)\in W^s(y)$. 
By eq.~\eqref{eq-Pesin}, $\lambda_r(f,\pi_*(\nu))\leq-\alpha$. By considering $(f^{-1},\mu)$, we obtain $\lambda_1(f,\pi_*(\nu))\geq\alpha$. Thus $\hchi(\pi_*(\nu))\geq\alpha$.

\end{proof}

\appendix
\section{Further remarks}\label{appendix-Bowen}

\subsection{Canonical Bowen relation}
A semiconjugacy  $\pi:X\to Y$ of a symbolic system $X$ can admit several Bowen relations.  However, one can define its  \emph{canonical relation} over its alphabet $\cA$ by:
 $
   \forall a,b\in\cA\quad  a\stackrel{\pi}{\sim} b \stackrel{\rm def}\iff \pi([a]_X)\cap\pi([b]_X)\ne\emptyset
 $
(recall that $[\cdot]_Z$ denotes the cylinder in $Z$ defined by some word).
The above relation is obviously reflexive and symmetric. We denote by $\BEpi$ the induced Bowen equivalence on $X$.

\begin{lemma}
For an arbitrary semiconjugacy $\pi:X\to Y$, the following implication holds:
 \begin{equation}\label{eq-piImpliesSim}
   \forall x,y\in X\quad \pi(x)=\pi(y)\implies x\BEpi y.
 \end{equation}
If the semiconjugacy $\pi$ is Bowen, then the canonical relation is a Bowen relation and it is the minimal one: if $\sim$ is any Bowen relation for $\pi$, then
 $a \stackrel{\pi}{\sim} b\implies a\sim b$ for any $a,b\in\cA$.
\end{lemma}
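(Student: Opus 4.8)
The plan is to isolate one elementary observation about letters and derive all three assertions from it together with the equivariance of $\pi$.

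First I would prove \eqref{eq-piImpliesSim} for an arbitrary semiconjugacy. Let $x,y\in X$ with $\pi(x)=\pi(y)$, and fix $n\in\ZZ$. Equivariance gives $\pi(S^nx)=T^n\pi(x)=T^n\pi(y)=\pi(S^ny)$. Since $(S^nx)_0=x_n$, the point $S^nx$ lies in the cylinder $[x_n]_X$, and likewise $S^ny\in[y_n]_X$; hence $\pi(S^nx)=\pi(S^ny)$ witnesses $\pi([x_n]_X)\cap\pi([y_n]_X)\ne\emptyset$, that is, $x_n\stackrel{\pi}{\sim}y_n$. As $n$ was arbitrary, $x\BEpi y$. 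Along the way one records that $\stackrel{\pi}{\sim}$ is reflexive (each cylinder $[a]_X$ is nonempty, $a$ ranging over the symbols occurring in $X$) and symmetric (intersection is symmetric).

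The key point is the following comparison, which is also the minimality assertion: if $\sim$ is \emph{any} Bowen relation admitted by $\pi$, then $a\stackrel{\pi}{\sim}b\implies a\sim b$. Indeed, $a\stackrel{\pi}{\sim}b$ furnishes $u\in[a]_X$ and $v\in[b]_X$ with $\pi(u)=\pi(v)$, and the Bowen property of $\sim$, applied to $u$ and $v$ at coordinate $0$, gives $a=u_0\sim v_0=b$.

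It then remains to check that, when $\pi$ is Bowen, $\stackrel{\pi}{\sim}$ is itself a Bowen relation. Reflexivity and symmetry are in hand, and the implication $\pi(x)=\pi(y)\Rightarrow\forall n\ x_n\stackrel{\pi}{\sim}y_n$ is \eqref{eq-piImpliesSim}. For the converse, fix a Bowen relation $\sim$ for $\pi$ (one exists by hypothesis): if $x_n\stackrel{\pi}{\sim}y_n$ for all $n$, the comparison step gives $x_n\sim y_n$ for all $n$, and the Bowen property of $\sim$ then yields $\pi(x)=\pi(y)$. I do not expect a genuine obstacle here; the only subtleties are the index bookkeeping $(S^nx)_0=x_n$ in the first step and the fact that the converse above must borrow a preexisting Bowen relation --- the canonical relation does not by itself record that $\pi$ collapses letterwise-related sequences --- which is precisely where the hypothesis that $\pi$ be Bowen is spent.
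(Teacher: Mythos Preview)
Your proof is correct and follows exactly the same approach as the paper's own argument; you have simply spelled out in detail what the paper dismisses as ``immediate'' (for \eqref{eq-piImpliesSim}) and ``obvious'' (for the fact that $\stackrel{\pi}{\sim}$ is a Bowen relation once minimality is established). The logical structure---prove minimality first, then use a preexisting Bowen relation $\sim$ to obtain the reverse implication for $\stackrel{\pi}{\sim}$---is identical.
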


\begin{proof}
The implication \eqref{eq-piImpliesSim} is immediate. Now assume that $\pi$ has some Bowen relation $\sim$ and let $a,b\in\cA$ with $a\stackrel\pi\sim b$: there are $x\in[a]_X$ and $y\in[b]_X$ with $\pi(x)=\pi(y)$. The Bowen property for $\sim$ gives  $a\sim b$ so we have proved $a\simpi b\implies a\sim b$. Now it is obvious that $\simpi$ is a Bowen relation for $\pi$.
\end{proof}

\begin{remark}
We do not know whether the reflexive and symmetric relation that appears in Sarig's construction (called affiliation) is canonical. Additionally, we do not know if the Bowen quotient (Theorem~\ref{thm-Bowen-quotient}) of a canonical relation is itself  canonical. 
\end{remark}

\subsection{Consequences for continuous extensions}
In our most important examples, the semiconjugacy is continuous over the Markov shift $X$ but the Bowen property is only known for the \emph{regular part} $X^\#$. It is then natural to consider $\pi|\overline{X^\#}$. It is easy to see that $\overline{X^\#}$ is a Markov shift: setting $\cA^\#:=\{x_0:x\in X^\#\}\subset\cA$,
  $  \overline{X^\#} = X \cap (\cA^\#)^\ZZ.$
As $\pi$ is continuous,  $\piS|\overline{X^\#}$ is determined by its regular part but the Bowen property may fail to extend to $\overline{X^\#}$. Denote by $\stackrel{\pi}{\sim}$ the canonical relation induced by $\piS|X^\#$ and by $\BEpi$ the corresponding relation on $\overline{X^\#}$.

\begin{lemma}
Let $\pi:X\to Y$ be a continuous semiconjugacy with $X$ a Markov shift. If the restriction of $\piS$ to $X^\#$ has the Bowen property, then:
 \begin{equation}\label{eq-closure-Bowen}
       \forall x,y\in \overline{X^\#}\quad x\BEpi y \implies \pi(x)=\pi(y).
 \end{equation}
\end{lemma}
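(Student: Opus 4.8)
The plan is to establish \eqref{eq-closure-Bowen} by a density-and-continuity argument. Given $x,y\in\overline{X^\#}$ with $x\BEpi y$, I would produce, for every $N\geq1$, two sequences $x^{(N)},y^{(N)}\in X^\#$ which coincide with $x$ and $y$ respectively on the window $[-N,N]$ and which moreover satisfy $\pi(x^{(N)})=\pi(y^{(N)})$. Since then $x^{(N)}\to x$ and $y^{(N)}\to y$ in the standard metric, continuity of $\pi$ on $X$ yields $\pi(x)=\lim_N\pi(x^{(N)})=\lim_N\pi(y^{(N)})=\pi(y)$, which is the assertion.

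The construction of these approximants is the core of the argument and rests on a grafting trick that uses the Markov property of $X$. From $x\BEpi y$ we get in particular $x_N\stackrel{\pi}{\sim}y_N$ and $x_{-N}\stackrel{\pi}{\sim}y_{-N}$; unwinding the definition of the canonical relation of $\pi|X^\#$, there are $a^{+},b^{+}\in X^\#$ with $a^{+}_0=x_N$, $b^{+}_0=y_N$ and $\pi(a^{+})=\pi(b^{+})$, and likewise $a^{-},b^{-}\in X^\#$ with $a^{-}_0=x_{-N}$, $b^{-}_0=y_{-N}$ and $\pi(a^{-})=\pi(b^{-})$. I would then let $x^{(N)}$ be the bi-infinite sequence obtained by keeping $x_{[-N,N]}$ in the middle, attaching the forward part of $a^{+}$ on the right (translated so that $a^{+}_0$ sits at index $N$) and the backward part of $a^{-}$ on the left (translated so that $a^{-}_0$ sits at index $-N$), and define $y^{(N)}$ from $y_{[-N,N]}$, $b^{+}$, $b^{-}$ in exactly the same way. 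Because all the new transitions are transitions of $a^{\pm}$ or $b^{\pm}$ and the two splice points match by construction, the Markov property ensures $x^{(N)},y^{(N)}\in X$; and since the grafted tails are tails of points of $X^\#$, they already contain a symbol occurring infinitely often on the far left and one on the far right, so in fact $x^{(N)},y^{(N)}\in X^\#$.

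It then remains to check $\pi(x^{(N)})=\pi(y^{(N)})$, which I would do by verifying $x^{(N)}\BEpi y^{(N)}$ coordinate by coordinate. On $[-N,N]$ this is precisely the hypothesis $x_n\stackrel{\pi}{\sim}y_n$. For an index $m\geq N$ the required relation reads $a^{+}_{m-N}\stackrel{\pi}{\sim}b^{+}_{m-N}$, which follows from $\pi(a^{+})=\pi(b^{+})$ directly from the definition of the canonical relation of $\pi|X^\#$ (shifting, $\pi(S^{k}a^{+})=\pi(S^{k}b^{+})$ shows the cylinders of $a^{+}_{k}$ and $b^{+}_{k}$ in $X^\#$ have intersecting images); symmetrically for indices $\leq-N$ using $\pi(a^{-})=\pi(b^{-})$. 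Now $x^{(N)},y^{(N)}$ lie in $X^\#$, the canonical relation $\stackrel{\pi}{\sim}$ is a Bowen relation for $\pi|X^\#$ by the first lemma of this appendix, and $\pi|X^\#$ has the Bowen property, so $x^{(N)}\BEpi y^{(N)}$ forces $\pi(x^{(N)})=\pi(y^{(N)})$; the first paragraph then finishes the proof.

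The only genuine obstacle here is conceptual rather than technical: the canonical relation sees only individual symbols, so one cannot merely approximate $x$ and $y$ by arbitrary $X^\#$-points and hope their images agree — the whole point is to force the tails of the approximants to come from honestly image-equal (hence $\BEpi$-equivalent) pairs of $X^\#$-points, which the grafting device achieves while preserving both membership in $X$ (via Markovness at the two splice sites) and membership in $X^\#$ (via the recurrence already present in the tails). Everything else is routine bookkeeping.
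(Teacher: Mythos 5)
Your proposal is correct and follows essentially the same route as the paper: graft $X^\#$-tails coming from image-equal (hence $\BEpi$-equivalent) pairs witnessing $x_{\pm N}\stackrel{\pi}{\sim}y_{\pm N}$ onto the central blocks $x_{[-N,N]}$, $y_{[-N,N]}$, check the spliced points lie in $X^\#$ and are Bowen equivalent, apply the Bowen property on $X^\#$, and pass to the limit by continuity. The only cosmetic difference is that you invoke the implication $\pi(u)=\pi(v)\Rightarrow u\BEpi v$ explicitly to relate the tails coordinatewise, whereas the paper phrases this as the tails being $\BEpi$-equivalent points; the content is identical.
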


\begin{proof}
Let $x,y\in \overline{X^\#}$ with $x\BEpi y$ and $n\geq1$.  As $x_{-n}\simpi y_{-n}$, there are $x^{-n}\in\sigma^n[x_{-n}]_{X^\#}$, $y^{-n}\in\sigma^n[y_{-n}]_{X^\#}$ with $\pi(x^{-n})=\pi(y^{-n})$. By the Bowen property, this implies $x^{-n}\BEpi y^{-n}$. Likewise, there are $x^n\in\sigma^{-n}[x_n]_{X^\#}$, $y^n\in\sigma^{-n}[y_n]_{X^\#}$ with $x^n\BEpi y^n$. Define $\tilde x^n\in X$ by:
 $$
    \tilde x^n_{k}=\alter{
    x^{-n}_{k} &\text{ for }k\leq -n\\
    x_k        &\text{ for }|k|\leq n\\
    x^n_k &\text{ for }k\geq n.}
 $$
 Define $\tilde y^n$ similarly. Observe that $\tilde x^n,\tilde y^n$ both belong to $X^\#$ and $\tilde x^n\BEpi\tilde y^n$ so that $\pi(\tilde x^n)=\pi(\tilde y^n)$. Since $\pi$ is continuous, $\pi(x)=\lim_n \pi(\tilde x^n)=\lim_n \pi(\tilde y^n)=\pi(y)$.
\end{proof}

Still, the Bowen property may fail to extend to $\overline{X^\#}$ as in the following example.

{\footnotesize
\begin{example}
Consider the graph with set of vertices
$\ZZ\cup\{\alpha,\omega\}$ and arrows $n\to(n+1)$, $\alpha\to n$, $n\to\omega$, $\alpha\to\alpha$, $\omega\to\omega$ {\rm(}for all $n\in\ZZ${\rm)}. Let $(S,X)$ be the induced Markov shift. Let $\pi:X\to Y\subset \{0,1,1/2,\dots\}^\ZZ$ be the semiconjugacy  such that {\rm(}the vertical bar is immediately to the left of index $0${\rm)}:
 \begin{enumerate}
  \item $\alpha^\infty$, $\omega^\infty\mapsto 0^\infty$;
  \item $\alpha^\infty| n\cdot(n+1)\cdots (n+\ell-1)\cdot\omega^\infty\mapsto 0^\infty| \tfrac1\ell\cdot\tfrac1\ell\cdots\tfrac1\ell\cdot0^\infty$ (for all $n\in\ZZ$, $\ell\in\NN$);
  \item $\alpha^\infty \cdot n\cdot(n+1)\cdot(n+2)\dots\mapsto 0^\infty$ (for all $n\in\ZZ$);
  \item $\dots(n-2)\cdot(n-1)\cdot n\,\omega^\infty\mapsto 0^\infty$ (for all $n\in\ZZ$);
  \item $\dots -2\cdot-1\cdot0\cdot1\cdot2\dots\mapsto 0^\infty$.
 \end{enumerate}
 $\pi$ is well-defined and continuous. The regular sequences are those in (1) and (2).

It is easy to check that $\pi$ is Bowen on $X^\#$ for the symmetric relation generated by $n\sim m$ for all $n,m\in\ZZ$ and $\alpha\sim\omega$. If the semiconjugacy~$\pi$ was Bowen on $X$, (1) and (5) would imply that all symbols would be related, contradicting (2).
\end{example}
}

\section{A locally compact recoding}\label{appendix-Q}

Our Main Theorem does not preserve local compactness.  In this appendix we provide an alternate construction which preserves local compactness at the expense of a slightly weaker injectivity property:

\begin{theorem}\label{maintheorem-Q}
Let  $(S,X)$ be a \emph{locally compact} Markov shift on some alphabet $\cA$. Let $X^\#$ be its regular part.
Let $\pi:(S,X^\#)\to(T,Y)$ be a Borel semiconjugacy such that:
\begin{itemize}
  \item[-] $(T,Y)$ is a Borel automorphism;
  \item[-] $\pi$ is finite-to-one, i.e., $\pi^{-1}(y)$ is finite for every $y\in Y$;
  \item[-] $\pi$ has the Bowen property with respect to a locally finite relation on $\cA$.
\end{itemize}   
Then there are a \emph{locally compact} Markov shift $(\tilde S,\tilde X)$ and a $1$-Lipschitz map $\phi:\tilde X\to X^\#$ such that $\pi\circ\phi:\tilde X\to Y$ defines a semiconjugacy satisfying:
\begin{itemize}
  \item[-] $\pi\circ\phi|\tilde X^\#$ is injective;
  \item[-] $\pi\circ\phi(\tilde X^\#)$ carries all invariant measures of $\pi(X^\#)$. 
\end{itemize}
\end{theorem}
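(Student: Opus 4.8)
The plan is to rerun the proof of the Main Theorem from Section~\ref{s-main}, tracking local compactness at every step. That proof has three stages: (i) one forms the degree partition $(Y^i)_{1\le i<I}$ of $\pi(X^\#)$; (ii) one builds iterated Bowen quotients $\pi_i:Z_i^\#\to Y$ satisfying $\degree_\rec(\pi_i)=1$, injective above $Y^i$ off a null set, with $\pi_i(Z_i^\#)\supseteq\bigcup_{j\ge i}Y^j$ off a null set; (iii) one recodes each $Z_i$ onto its set $\widetilde Z_i$ of sequences seeing a magic word infinitely often via Lemma~\ref{lem-magic-subset}, and takes the disjoint union $\tilde X=\bigsqcup_i S_i$ with $\phi|_{S_i}=q_i\circ p_i$. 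Stage (i) is purely set-theoretic; by Theorem~\ref{thm-Bowen-quotient}(1) and Lemma~\ref{lem-good-quotient}(2) each $Z_i$ in stage (ii) is locally compact when $X$ is, and each $q_i$ remains $1$-Lipschitz and proper. So the only step that destroys local compactness is the recoding of Lemma~\ref{lem-magic-subset}, whose loop graph at the magic word has ``return'' vertices of infinite degree. This is unavoidable in the strict sense: the first-return (induced) system on a cylinder of a locally compact Markov shift need not be locally compact --- for the full $2$-shift induced on a $1$-cylinder it is the full shift on a countably infinite alphabet, which is not conjugate to any locally compact Markov shift --- so $\widetilde Z_i$ cannot itself be realized as a locally compact Markov shift.

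Accordingly the heart of the proof is a weaker substitute for Lemma~\ref{lem-magic-subset}: \emph{given a locally compact Markov shift $X$ and a word $W\in\cL(X^\rec)$, there are a locally compact Markov shift $S$ and a $1$-Lipschitz code $p:S\to X$ which is injective on $S^\#$, with $p(S^\#)\subseteq X_W$ off a null set and with $p(S^\#)$ carrying every invariant measure of $X_W$.} Granting this and feeding it an enumeration of the magic words of each $\pi_i$, stage (iii) goes through as before: since $\pi$ is finite-to-one all Bowen equivalence classes are finite, so by Lemma~\ref{lem-magic-injec} every $X_W$ sits inside the injectivity locus of $\pi_i$; by Theorem~\ref{thm-degree} and Corollary~\ref{cor-Xmin} the magic-word sets of $\pi_i$ jointly carry all invariant measures whose image charges $Y^i$; and the disjointness Claim~\ref{claim-disjoint} then gives that $\pi\circ\phi$ is injective on $\tilde X^\#=\bigsqcup_i S_i^\#$ while $\pi\circ\phi(\tilde X^\#)=\pi(X^\#)$ off a null set, so it carries all invariant measures of $\pi(X^\#)$. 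That $\phi$ is $1$-Lipschitz into $X^\#$, that $\pi\circ\phi$ is a semiconjugacy, and that H\"older continuity of $\pi$ passes to $\pi\circ\phi$ are verified exactly as in Section~\ref{s-main}, using that each $q_i$ is $1$-Lipschitz and proper and each $p_i$ is a one-block code.

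For the weak recoding, I would pass first to the $|W|$-block presentation, so that $W$ becomes a single vertex $w$ of a locally compact graph $\cG$, and then build $S$ so that the subgraph obtained by deleting the copies of $w$ is \emph{directed acyclic}. The key observation is that a bi-infinite path in a finite-degree directed acyclic graph visits each vertex at most once, hence is never word-recurrent: so every recurrent point of such an $S$ --- and, up to a null set, every regular one --- must pass through a copy of $w$ infinitely often and thereby code into $X_W$. To lose no invariant measure, the $w$-to-$w$ paths of $S$ must code, length for length, all first-return loops of $\cG$ at $w$; since $\cG$ is locally compact there are only finitely many such loops of each length, and one exploits that along a typical orbit the excursion lengths do not grow too fast to organize them --- after duplicating internal vertices and splitting $w$ into countably many finite-in-degree, finite-out-degree copies indexed by a slowly growing parameter --- inside a finite-degree acyclic graph, the ``spurious'' non-returning bi-infinite sequences so introduced being automatically non-regular. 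The main difficulty is this simultaneous bookkeeping --- finite degree, forced recurrence through $w$, exact matching of word lengths so that $p$ is a genuine code, and retention of every invariant measure over $Y^i$ --- the tension between finite degree and forced recurrence being exactly the obstruction underlying Question~4 and the reason injectivity can be recovered only on the regular part.
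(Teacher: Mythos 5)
Your reduction is the right one and matches the paper: the only step of the Main Theorem's proof that destroys local compactness is Lemma~\ref{lem-magic-subset}, and the fix is a substitute recoding lemma whose injectivity holds only on the regular part (this is exactly Lemma~\ref{lem-magic-subset-Q} in Appendix~\ref{appendix-Q}). But the proposal stops precisely where the proof begins. Everything after ``I would pass first to the $|W|$-block presentation'' is a description of what a construction would have to achieve --- finite degree, forced recurrence through $w$, length-preserving coding of all first-return loops, retention of all invariant measures --- followed by the admission that ``the main difficulty is this simultaneous bookkeeping.'' That bookkeeping \emph{is} the theorem; deferring it leaves the argument without content. Moreover the ``directed acyclic after deleting $w$'' framing adds nothing: in the loop graph $\cG^3$ the $w$-deleted subgraph is already a disjoint union of finite paths; the sole obstruction is the infinite in/out-degree at the copies of the return vertex, and your sketch of ``splitting $w$ into countably many copies indexed by a slowly growing parameter'' does not say what the parameter is, why the resulting graph has finite degree, or why a regular sequence determines its index uniquely (without which $p|S^\#$ need not be injective --- a single flat orbit could admit several labelings).

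For comparison, the paper's construction decorates each loop-vertex $(v,j)$ with two counters $(L_-,L_+)$ satisfying $|v|\le\min(L_-,L_+)$, with transition rule $M_-=\max(|v|,L_--1)$ and $L_+=\max(|w|,M_+-1)$; since the counters can decrease by at most $1$ per step, they bound the length of the next admissible loop, which gives finite degree. One then must prove three nontrivial facts: (i) the set of sequences of $\cG^3$ whose excursion lengths satisfy $|v^n|-|n|\to-\infty$ (the \emph{flat part}) carries every invariant measure (a pointwise-ergodic-theorem argument); (ii) flat sequences admit a canonical lift $\iota$ to the decorated graph, given by $L_\pm^n(v)=\max_{k\ge0}|v^{n\mp k}|-k$; and (iii) every \emph{regular} decorated sequence equals the canonical lift of its projection, which is what makes $p|S^\#$ injective and identifies its image with the flat part, hence with $X_\cW$ up to a null set. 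None of (i)--(iii) appears in your proposal, and (iii) in particular is exactly the uniqueness-of-labeling issue your sketch glosses over. As written, the proposal is an accurate reduction plus a statement of intent, not a proof.
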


\newcommand\cV{\mathcal V}

\begin{proof}
We explain the required changes in proof of the Main Theorem. 
An inspection of the proof of the Main Theorem shows that the local compactness is lost in Lemma~\ref{lem-magic-subset}. It suffices to replace this lemma with the following statement.
\end{proof}

\begin{lemma}\label{lem-magic-subset-Q}
Let $X$ be a locally compact Markov shift. Let $\cW:=(W^j)_{1\leq j<J}$ with $1<J\leq \infty$ be an enumeration of $X$-words. Then there is a one-block code $p:S\to X$ defined on a  Markov shift $S$ such that:
 \begin{enumerate}
  \item $S$ is locally compact;
  \item $p|S^\#$ is injective;
  \item the image of $p|S^\#$ is $X_\cW\setminus N$, where $X_\cW$ is the of sequences in $X$ which see i.o. some word from $\cW$ and $N$ is a null set.
  \end{enumerate} 
\end{lemma}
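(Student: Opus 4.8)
The plan is to keep the skeleton of the proof of Lemma~\ref{lem-magic-subset} and to replace the single ingredient that destroys local compactness --- the loop graph at a fixed word --- by a locally finite presentation, paying for this with the weaker injectivity statement.

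First I would run the same reduction. Using Fact~\ref{fact-reorder} we may assume that $W^i$ is a subword of $W^j$ only when $i\le j$, and for $1\le i<J$ set
 $$
   X_i:=\{x\in X:\ W^i\text{ occurs i.o.\ in }x\text{ and none of }W^1,\dots,W^{i-1}\text{ occurs in }x\}.
 $$
These sets are pairwise disjoint, and their union differs from $X_\cW$ only by a null set (a sequence that sees a word without seeing it i.o.\ is not word recurrent, hence lies in the null set $X\setminus X^\rec$). Thus it suffices to code each nonempty $X_i$ by a \emph{locally compact} Markov shift: to produce a locally compact Markov shift $S_i$ and a one-block code $p_i\colon S_i\to X$ whose recurrent part maps into $X_i$, which is injective on $S_i^\#$, and with $p_i(S_i^\#)=X_i$ up to a null set. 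Then $S:=\bigsqcup_i S_i$ (disjoint alphabets) and $p:=\bigsqcup_i p_i$ is a one-block code on a locally compact Markov shift, injective on $S^\#$ because the $X_i$ are disjoint, with $p(S^\#)=X_\cW$ up to a null set.

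To build $S_i$, I would first pass to a finite-alphabet picture: with $L_i:=\max(|W^1|,\dots,|W^i|)<\infty$, let $\Omega_i$ be the $L_i$-th higher block presentation of $X$ with every vertex containing one of $W^1,\dots,W^{i-1}$ deleted, and let $\mathcal M_i$ be the set of remaining vertices in which $W^i$ occurs. Then $\Omega_i$ is locally compact (a higher block presentation followed by deletion of vertices only decreases in- and out-degrees), the higher block code $h_i$ identifies $\Omega_i$ with the subshift of $X$ avoiding $W^1,\dots,W^{i-1}$, and $X_i$ corresponds to the set of sequences of $\Omega_i$ visiting $\mathcal M_i$ i.o. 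The remaining task is to present this last set, up to a null set, by a locally compact Markov shift: I would do this by a marker construction over the alphabet of $\Omega_i$, marking certain visits to $\mathcal M_i$ and choosing the transition rules so that every \emph{recurrent} bi-infinite path --- in particular every periodic one --- meets $\mathcal M_i$ i.o. Then $S_i$ carries no invariant measure whose image avoids $\mathcal M_i$, the marker at each position is determined by the underlying $\Omega_i$-sequence (so $p_i$ remains injective on $S_i^\#$), and one gets $p_i(S_i^\#)=X_i$ up to a null set. The rest (local compactness of $S$, that $p$ is a one-block code, the null-set bookkeeping giving $p(S^\#)=X_\cW\setminus N$) then goes through as in Lemma~\ref{lem-magic-subset}.

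I expect the marker construction to be the real obstacle. The naive choices do not work: recording at each position the distance to the last occurrence of $\mathcal M_i$ gives a reset vertex of infinite in-degree, recording the distance to the \emph{next} occurrence gives infinite out-degree, and in fact --- because the gaps between successive occurrences of $W^i$ can be unbounded --- no locally finite graph can present ``$W^i$ occurs i.o.''\ as an honest shift-invariant set. This is exactly why the conclusion must be weakened: one lets $p$ be finitely many-to-one off the regular part and designs $S_i$ so that the orbits violating ``$W^i$ i.o.''\ are non-recurrent, hence carry no invariant measure and disappear upon passing to $S_i^\#$. Making this precise --- keeping all in- and out-degrees finite, ensuring every recurrent orbit meets $\mathcal M_i$, and matching the image to $X_i$ up to a null set --- is where the proof of the lemma actually concentrates.
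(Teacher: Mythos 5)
Your reduction to the sets $X_i$, the disjointness, and the null-set bookkeeping all match the paper, and your diagnosis of the obstruction is accurate: no locally finite graph can present ``$W^i$ occurs i.o.'' on the nose, the naive counters (distance to the last or to the next occurrence of $W^i$) produce a vertex of infinite in- or out-degree, and the remedy must exploit the restriction to the regular part. But you stop exactly where the proof lives. ``A marker construction \dots choosing the transition rules so that every recurrent bi-infinite path meets $\mathcal{M}_i$ i.o.'' is not an argument; it is a restatement of what has to be built, and you say so yourself. As it stands the proposal has a genuine gap: the locally compact presentation is never constructed.

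For the record, the paper's device is the following. It keeps the loop graph $\cG^3$ of Lemma~\ref{lem-magic-subset} (vertices $(v,j)$ with $v$ a good first-return loop at $W^i$), so that \emph{every} bi-infinite path --- not just the recurrent ones --- already projects into $X_i$; the only defect is local compactness, caused by the arrows $(v,|v|-1)\to(w,0)$ ranging over all loops $w$. It then decorates each vertex with two integer counters, forming vertices $(v,j,L_-,L_+)$ with $|v|\le\min(L_-,L_+)$ and transition rule $M_-=\max(|v|,L_--1)$, $L_+=\max(|w|,M_+-1)$. Because the counters decrease by at most $1$ per step except when reset to a loop length, they bound the lengths of all neighboring loops in terms of the current vertex, which forces finite in- and out-degrees. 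The price is that a $\cG^3$-path has many lifts; injectivity is recovered on the regular part by showing the counters there are forced to equal the canonical values $L_\pm^n=\max_{k\ge0}(|v^{n\pm k}|-k)$, and these are finite precisely on a ``flat part'' shown to be of full measure by a Birkhoff-type argument. If you want to complete your version you need an analogue of this counter mechanism. Note also two points your sketch glosses over: $S_i^\#$ is the \emph{regular} part, which strictly contains the word-recurrent part, so arranging that recurrent paths meet $\mathcal{M}_i$ i.o.\ does not by itself control $p_i(S_i^\#)$; and item (3) requires the image of $p|S^\#$ to be \emph{contained} in $X_\cW$, not merely to agree with it up to a null set, so you cannot afford the image to leak outside $X_i$ even on a null set.
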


\begin{proof}[Proof of the lemma]
Fix some $1\leq i<J$ and let $N:=|W^i|$. Recall the graph $\cG^3$ and the extension $p^i:S_i\to X_i$ defined in the proof of Lemma~\ref{lem-magic-subset}. The vertices of $\cG^3$ are couples $(v,j)$ with $v$ a $X^{[N]}$-word with $v_0=W^i$ and $j$ an integer such that $0\leq j <|v|$. The set of such vertices $(v,j)$ is $\cV^3$. Except in somewhat trivial situations, the lengths of the words $v$'s from $\cV^3$ are unbounded. 

Note that only vertices $(v,j)$ with $j=|v|-1$ can have outdegree larger than $1$ and that only vertices $(v,0)$ can have indegree large than $1$. However $(v,|v|-1)\to(w,0)$ whenever $v_{|v|-1}\to w_0=W^i$ in $X^{[N]}$. Since they are infinitely many words $v$ (their length being unbounded), $S_i$ is not locally compact.

We define a new graph $\cG^4$ as follows. Let:
 $$\begin{aligned}
   &\cV^4:=\{ (v,j,L_-,L_+)\in\cL(X^{[N]})\times\NN\times\mathbb N: (v,j)\in\cV^3\text{ with } |v|\leq \min(L_-,L_+)\};\\
   &(v,j,L_-,L_+)\to(w,k,M_-,M_+) \text{ if and only if } (v,j)\stackrel{\cG^3}\to (w,k)\\
   &\qquad\qquad\qquad \text{ and }  M_-:=\max(|v|,L_--1),\; L_+:=\max(|w|,M_+-1).
  \end{aligned}$$
Let $T_i$ be the Markov shift defined by $\cG^4$ and define $p|T_i$ as $p^i\circ q$ where $q(v,k,L)=(v,k)$.  
  
\step{1}{Local compactness.}
Let $(v,j,L_-,L_+) \to (w,k,M_-,M_+)$ on $\cG^3$. Note that 
 $$
    M_-=\max(|v|,L_--1)\text{ and }M_+\leq L_++1.
 $$
It follows that, given $(v,j,L_-,L_+)$ there are finitely many possibilities for $(M_-,M_+)$. In particular $|w|$ is bounded. Now $w=v$ or $w$ starts by the fixed $X$-word $W^i$. Since $X$ is locally compact, this gives finitely many possibilities for $(w,k)$. 
Thus the outdegree of any vertex in $\cG^3$ is finite. Likewise the indegree of any vertex is finite.  The local compactness, i.e., item (1), is proved.

\medbreak

We define the \emph{flat part} of $S^i=\Sigma(\cG^3)$ to be:
 $$
 S_i^\flat:=\{(v,j)\in S^i:\lim_{n\to\pm\infty} |v^n|-|n|=-\infty \}
  \text{ where } (v^n,j^n)_{n\in\ZZ}=(v,j).
 $$

\step{2}{The flat part has full measure for any invariant probability measure $\mu$ on $S_i$.}

We can restrict to $\mu$ ergodic. Now, assume by contradiction that $\lim_{n\to\pm\infty} |v^n|-|n|=-\infty$ fails for  a set $D$ of points $(v,j)\in S_i$ with positive $\mu$-measure. Hence, for any $(v,j)\in D$, there are a constant $C>0$ and arbitrarily large integers $n$ such that $|v^n|\geq |n|-C\geq |n|/2$. We assume that one can choose these integers to be  positive, the negative case being similar. For such an integer $n$, if  $\sigma^k(v,j)\in E_n:=\{(w,\ell)\in S^i:|w^0|>n\}$ for $k=k_0$ for some $0\leq k<n$, then it holds for all $k$ in some positive interval segment of length $|v^{k_0}|\geq n/2$ and containing $k_0$. Therefore it holds for at least $n/2$ integers  $0\leq k<(3/2)n$. Hence, for any integer $N$,
$$
  \forall x\in D\quad  \limsup_{n\to\infty} \frac1{(3/2)n} \#\{0\leq k<(3/2)n:\sigma^k(x)\in E_N\}\geq1/4.
$$
By the pointwise ergodic theorem, this implies that $\mu(E_N)\geq 1/4$ for all $N$, contradicting the $\sigma$-additivity of $\mu$. Hence  $\mu(S_i^\flat)=1$.

\step{3}{There is a canonical lift $\iota:S_i^\flat\to T_i$ which is well-defined with $q\circ\iota=\id$.}

Given $(v,j)\in S^i$, we let:
 $$
    L_-^n(v):= \max_{k\geq0} |v^{n-k}|-k \text{ and }
    L_+^n(v):= \max_{k\geq0} |v^{n+k}|-k.
 $$
and define the \emph{canonical lift} as:
 $$
  \iota: (v^n,j^n)_{n\in\ZZ} \longmapsto (v^n,j^n,L_-^n(v),L_+^n(v))_{n\in\ZZ}
 $$

Let $(v,j)\in S_i^\flat$. We check that $\iota(v,j)$ is well-defined.
First, the numbers $L_-^n(v), L_+^n(v)$ are well-defined since, by the definition of $S_i^\flat$, $|v^{n+k}|-k<0$ for all large $k\geq0$. Note also:
 $$\begin{aligned}
    L_+^n(v) &=  \max(|v^n|,\max_{k\geq1} |v^{n+k}|-k) = \max(|v^n|,\max_{k\geq0} |v^{n+1+k}|-k-1)\\ &= \max(|v^n|,L^+_{n+1}(v)-1)
 \end{aligned}$$
 and likewise $L_-^{n+1}(v)=\max(|v^{n+1}|,L_-^n(v)-1)$. These identities show that:
  $$
   (v^n,j^n,L_-^n(x),L_+^n(v))_{n\in\ZZ}\in T_i.
  $$
Thus $\iota:S_i^\flat\to T_i$ is well-defined. 

\medbreak

The identity $q\circ \iota=\id$ is trivial. 

\step{4}{The map $q:T_i^\#\to S_i^\flat$ is well-defined and $\iota\circ q|T_i^\#=\id$.}

To see that $q$ is well-defined, it suffices to check that $q(T_i^\#)\subset S_i^\flat$. Let $z:=(v,j,L_-,L_+)\in T_i^\#$. Thus there is some $z^*:=(v^*,j^*,L_-^*,L_+^*)\in\cV^4$ that appears infinitely many times in $z^n$ when $n\geq0$. Let $n\geq0$ be a large integer. Let $m(n)$ be the largest index less than $n$ such that $z^{m(n)}=z^*$. Observe that $L_+^n\leq L_+^*+(n-m)$. Thus $|v^n|-n\leq L_+^*-m(m)$. Thus $\lim_{n\to\infty} |v^n|-n= -\infty$ as $\lim_{n\to\infty} m(n)=+\infty$. The limit when $n\to-\infty$ is handled similarly using $L_-^*$, proving that $q$ is well-defined.

We turn to the identity $\iota\circ q|T_i^\#=\id$. Let $x\in T_i^\#$. We must show that it coincides with the canonical lift $\tilde x:=\iota(q(x))$. Write $(v^n,j^n,L_-^n,L_+^n):=x_n$ and $(\tilde v^n,\tilde j^n,\tilde L_-^n,\tilde L_+^n):=\tilde x_n$. 

Since $x\in T_i^\#$, there is a symbol $a:=(v,j,M_-,M_+)$ which appears infinitely often in the past of $x$. Thus there are arbitrarily large integers $N$ such that $x_{-N}=a$. It follows that $L_-^K=|v^K|$ for some $-N\leq K\leq -N+M_-+1$. Indeed, otherwise one would have: $L_-^{-N+M_-+1}=M_- - M_- - 1 < 0$, a contradiction.

By an easy induction, the definition of the arrows in $\cG^4$ implies that:
 $$
    \forall n\in\ZZ\quad  L_-^n \geq \max_{k\geq0} |v^{n-k}|-k
  $$
 It follows that the canonical lift is as small as possible in the following sense:
 $$
      \forall n\in\ZZ\quad \tilde L_-^n\leq L_-^n.
    $$
Since $L_-^K=|v^K|$, then $L_-^K=\tilde L_-^K$ and therefore $L_-^k=\tilde L_-^k$ for all $k\geq K$ and in particular, all $k> -N+M_-$.  Since $N$ is arbitrarily large, it follows that $L_-^n=\tilde L_-^n$ for all $n\in\ZZ$. A similar reasoning applies to the sequence $(L_+^n)_{n\in\ZZ}$, concluding the proof that $x=y$ and therefore of the identity.

We note that this identity implies that $q$ is injective. The theorem is proved.
 \end{proof}

\section{Application to Sina\"\i\ billiards collision maps}

We prove Theorem~\ref{theorem-BilliardMap}, i.e., the lower bound on the periodic points for the billiard maps considered by Baladi and Demers \cite{Baladi-Demers-2018}. We fix such a collision map $T_B$  defined by a two-dimensional Sina\"\i\ billiard satisfying conditions (BD1) and (BD2) quoted in our introduction.  Theorem~2.4 of \cite{Baladi-Demers-2018} yields a strongly mixing measure $\mu_*\in\Proberg(T_B)$ such that
 \begin{equation}\label{eq-TB-entropy}
     h(T_B,\mu_*) = \sup \{h(T_B,\nu): \nu\in\Prob(T_B)\} = h_* 
 \end{equation}
 where $h_*$ is a combinatorial entropy from their eq. (1.1).
 
Let $\pi:(\Sigma,\sigma)\to(M,T_B)$ be the coding built by \cite[Thm. 1.3]{Lima-Matheus-2016} for some hyperbolicity parameter $\Lambda<\chi<\chi(f,\mu_*):=\min(\lambda^1(f,\mu_*),-\lambda^2(f,\mu_*))$. As in Sarig's construction for diffeomorphisms, $\Sigma$ is a Markov shift  and $\pi$ is a H\"older-continuous semiconjugacy. Note that $M$ is a two-dimensional compact manifold with boundary and that, writing $M_1$ for the domain where both $T_B$ and its inverse are well-defined and differentiable, 
  $$
      \pi(\Sigma^\#) \subset \pi(\Sigma) \subset \bigcap_{n\in\ZZ} T_B^{-n}(M_1) \subset M\setminus\partial M
   $$
(the middle inclusion is nontrivial but is proved in \cite{Lima-Matheus-2016}). 

An inspection of the proof in \cite{Lima-Matheus-2016} shows that the semiconjugacy on $\Sigma^\#$ admits a  Bowen relation  just as in the smooth case of \cite{Sarig-JAMS}. Indeed, though this is not stated in \cite{Lima-Matheus-2016}, it follows from the same arguments as in the original case, see Section~\ref{s-applications}. This Bowen relation is locally finite thanks to \cite[Prop. 7.1(2)]{Lima-Matheus-2016}.

According to \cite[Thm. 2.4]{Baladi-Demers-2018}, $\mu_*$ is $T$-adapted in the sense of \cite{Lima-Matheus-2016}. Since it is $\chi$-hyperbolic from the choice of $\chi$, \cite[Thm. 1.3]{Lima-Matheus-2016} implies the existence of $\hat\mu_*\in\Proberg(\sigma)$ such that $\pi_*(\hat\mu_*)=\mu_*$. Since $\pi|\Sigma^\#$ is finite-to-one, $\pi_*:\Prob(\Sigma,\sigma)\to\Prob(T_B)$ preserves the entropy. Therefore, eq.~\eqref{eq-TB-entropy} implies that $\hat\mu_*$ is an m.m.e. for $\Sigma$.

As in Lemma~\ref{lem-barchi1}, we can assume that the image of any periodic orbit of $\Sigma$ is $\chi'$-hyperbolic for any $\chi'<\chi$. In particular, this holds for $\chi'=\Lambda$.

Our Main Theorem now allows us to replace $\pi:(\Sigma,\sigma)\to M$ by an injective coding still denoted by $\pi:\Sigma\to M$. The lift $\hat\mu$ by $\pi$ of the m.m.e. $\mu_*$ is now measure-preservingly isomorphic to $\mu_*$, hence $\hat\mu_*$ is also strongly mixing. This implies that the irreducible component of $\Sigma$ carrying $\hat\mu_*$ has period $1$.

Finally, Lemma \ref{lem-barchi2} gives the claimed lower bound on the periodic orbits.

\bibliographystyle{plain}

\begin{thebibliography}{10}

\bibitem{Baladi-Demers-2018}
Viviane Baladi and Mark Demers.
\newblock On the measure of maximal entropy for finite horizon {S}inai billiard
  maps.
\newblock {\em ArXiv}, July 2016.
\newblock 1807.02330.

\bibitem{benovadia-coding}
S.~{Ben Ovadia}.
\newblock Symbolic dynamics for non uniformly hyperbolic diffeomorphisms of
  compact smooth manifolds.
\newblock {\em ArXiv}, September 2016.
\newblock 1609.06494.

\bibitem{Bowen-AxiomA}
Rufus Bowen.
\newblock {\em On Axiom A diffeomorphisms}, volume~35 of {\em Regional
  Conference Series in Mathematics}.
\newblock American Mathematical Society, Providence, R.I., 1978.

\bibitem{BoyleBuzzi-2017}
Mike Boyle and J\'er\^ome Buzzi.
\newblock The almost borel structure of surface diffeomorphisms, markov shifts
  and their factors.
\newblock {\em J. Eur. Math. Soc. (JEMS)}, 19(9):2739--2782, 2017.

\bibitem{Burguet-PeriodicPoints}
David Burguet.
\newblock Periodic expansiveness of smooth surface diffeomorphisms and
  applications.
\newblock {\em J. Eur. Math. Soc. (JEMS)},
\newblock on-line.

\bibitem{Buzzi-QFT}
J\'er\^ome Buzzi.
\newblock Subshifts of quasi-finite type.
\newblock {\em Invent. Math.}, 159:369--406, 2005.

\bibitem{BCS}
J\'er\^ome Buzzi, Sylvain Crovisier, and Omri Sarig.
\newblock Measures of maximal entropy for surface diffeomorphisms.
\newblock {\tt arXiv:1811.02240}.

\bibitem{Chernov-book}
Nikolai Chernov and Roberto Markarian.
\newblock {\em Chaotic billiards}.
\newblock Number 127 in Mathematical Surveys and Monographs. American
  Mathematical Society (Providence), 2006.

\bibitem{Coven-Paul-1974}
Ethan Covan and Michael Paul.
\newblock Endomorphisms of irreducible shifts of finite type.
\newblock {\em Math. Systems Theory}, 8:167--175, 1974.

\bibitem{Coven-Paul-1975}
Ethan Covan and Michael Paul.
\newblock Sofic systems.
\newblock {\em Israel J. Math.}, 20:166--177, 1975.

\bibitem{Coven-Paul-1977}
Ethan Covan and Michael Paul.
\newblock Finite procedures for sofic systems.
\newblock {\em Monatsh. Math.}, 83:265--278, 1977.

\bibitem{Fried-1987}
David Fried.
\newblock Finitely presented dynamical systems.
\newblock {\em Ergodic Theory Dynam. Systems}, 7(4):489--507, 1987.

\bibitem{GurevicSavchenko-1998}
Boris~M. Gurevich and S.~V. Savchenko.
\newblock Thermodynamic formalism for symbolic {M}arkov chains with a countable
  number of states.
\newblock {\em Russian Math. Surveys}, 2:245--344, 53.

\bibitem{Hedlund-1969}
Gustav Hedlund.
\newblock Endomorphisms and automorphisms of the shift dynamical system.
\newblock {\em Math. Systems Theory}, 3:320--375, 1969.

\bibitem{Kaloshin-SuperExp}
Vadim Kaloshin.
\newblock Generic diffeomorphisms with superexponential growth of number of
  periodic orbits.
\newblock {\em Comm. Math. Phys.}, 211:253--271, 2000.

\bibitem{Katok-Hasselblatt}
Anatole Katok and Boris Hasselblatt.
\newblock {\em An introduction to the modern theory of dynamical systems}.
\newblock Cambridge University Press, 1995.

\bibitem{Kechris-book}
Alexander~S. Kechris.
\newblock {\em Classical descriptive set theory}, volume 156 of {\em Graduate
  Texts in Mathematics}.
\newblock Springer-Verlag, New York, 1995.

\bibitem{Kitchens-book}
Bruce Kitchens.
\newblock {\em Symbolic dynamics}.
\newblock Universitext. Springer (Berlin), 1998.

\bibitem{Lima-Matheus-2016}
Yuri Lima and Carlos. Matheus.
\newblock Symbolic dynamics for non uniformly hyperbolic surface maps with
  discontinuities.
\newblock {\em ArXiv}, May 2016.
\newblock 1606.05863.

\bibitem{Lima-Sarig}
Yuri Lima and Omri Sarig.
\newblock Symbolic dynamics for three dimensional flows with positive
  topological entropy.
\newblock {\em J. Eur. Math. Soc. (JEMS)}.
\newblock to appear.

\bibitem{LindMarcus-book}
Douglas Lind and Brian Marcus.
\newblock {\em An introduction to symbolic dynamics and coding.}
\newblock Cambridge University Press (Cambridge, 1995.

\bibitem{Manning-1971}
Anthony Manning.
\newblock Axiom {A} diffeomorphisms have rational zeta functions.
\newblock {\em Bull. London Math. Soc.}, 3:215--220, 1971.

\bibitem{Newhouse-1989}
Sheldon Newhouse.
\newblock Continuity properties of entropy.
\newblock {\em Annals Math. (2)}, 129(2):215--235, 1989.

\bibitem{Sarig-JAMS}
Omri Sarig.
\newblock Symbolic dynamics for surface diffeomorphisms with positive entropy.
\newblock {\em J. Amer. Math. Soc.}, 26(2):341--426, 2013.

\bibitem{Sarig-Private}
Omri Sarig.
\newblock private communication, 2015.

\end{thebibliography}

\end{document}